\theoremstyle{plain}\newtheorem{definition}{Definition}[section]
\theoremstyle{plain}\newtheorem{theorem}{Theorem}[section]
\theoremstyle{plain}\newtheorem{lemma}[theorem]{Lemma}
\theoremstyle{plain}\newtheorem{corollary}[theorem]{Corollary}
\theoremstyle{plain}\newtheorem{proposition}[theorem]{Proposition}
\theoremstyle{plain}\newtheorem{remark}{Remark}[section]
\newcommand{\norm}[1]{\left\|#1\right\|}
\newcommand{\R}{\mathbb{R}}
\newcommand{\be}{\begin{equation}}
\newcommand{\ee}{\end{equation}}
 \newcommand{\ba}{\begin{aligned}}
 \newcommand{\ea}{\end{aligned}}
  \newcommand{\ben}{\begin{enumerate}}
   \newcommand{\een}{\end{enumerate}}
\newcommand{\Rmnum}[1]{\expandafter\@slowromancap\romannumeral #1@}
\begin{document}
\title{Remarks on   well-posedness of the generalized surface quasi-geostrophic equation}
\author{Huan Yu\footnote{
  School of Applied Science, Beijing Information Science and Technology University, Beijing, 100192, P.R.China  Email:  yuhuandreamer@163.com},\,\,\,\,Xiaoxin Zheng\footnote{ School of Mathematics and Systems Science, Beihang University, Beijing 100191, P.R. China Email: xiaoxinzheng@buaa.edu.cn}\;~ and\,
	Quansen Jiu\footnote{
	School of Mathematical Sciences, Capital Normal University, Beijing, 100048, P.R. China Email:
	jiuqs@cnu.edu.cn }}
	\date{}
	\maketitle\vspace{-0.85cm}







\begin{abstract}
In this paper, we  are  concerned with  the Cauchy problem of the generalized surface quasi-geostrophic (SQG) equation in which the velocity field is expressed as $u=K\ast\omega$,
 where $\omega=\omega(x,t)$ is an unknown function and $K(x)=\frac{x^\perp}{|x|^{2+2\alpha}}, 0\le\alpha\le \frac12.$ When $\alpha=0$, it is the two-dimensional Euler equations.  When $\alpha=\frac 12$, it corresponds to the inviscid SQG. We will prove that if the existence interval of the smooth solution to the generalized SQG for some $0\le\alpha_0\le\frac12$ is $[0,T]$, then under the same initial data, the existence interval of the generalized SQG with $\alpha$ which is close to $\alpha_0$ will keep  on $[0,T]$. As a byproduct, our results  imply that the construction of the possible singularity  of the smooth solution of the Cauchy problem to the generalized SQG with $\alpha>0$  will be subtle, in comparison with the singularity  presented in  \cite{[KRYZ]}. To prove our main results, the difference between the two solutions and meanwhile the approximation of the singular integrals will be dealt with. Some new uniform estimates with respect to $\alpha$ on the singular integrals and commutator estimates will be shown in this paper.
\end{abstract}
\noindent {\bf MSC(2000):}\quad 35Q35 , 76D03, 76D05 \\\noindent
	{\bf Keywords:} surface quasi-geostrophic, global
well-posedness, singular integrals   \\
\smallskip
\maketitle


\section{Introduction and Main Results}
We consider the Cauchy problem of the generalized SQG (Surface Quasi-geostrophic) equation in the plane as follows
\begin{equation}\label{SQG}\tag{SQG}
\left\{\ba
&\omega_{t}+u\cdot\nabla \omega =0, \qquad \qquad(x,t)\in \R^{2}\times\R^{+},\\
&u=\nabla^{\perp}(-\Delta)^{-1+\alpha}\omega, \\
&\omega(x,0)=\omega_0 \ea\ \right.
\end{equation}
 with $0\le \alpha\le\frac 12$. Here, according to the second equation in $\eqref{SQG}$, the unknown scalar function   $\omega=\omega(x,t)$ and vector field $u=(u_1(x,t),u_2(x,t))$  can be expressed as the singular integral
\begin{equation}\label{Int}
u(x)=\int_{\R^2} \frac{(x-y)^\perp}{|x-y|^{2+2\alpha}}\omega(y) \,\mathrm{d}y.
\end{equation}
Throughout our paper,  we omit some constants before the singular integral \eqref{Int}  for conciseness. Meanwhile, the expression \eqref{Int} implies that $u$ is divergence-free, that is, $\nabla\cdot u=\partial_{x_1}u_1+\partial_{x_2}u_2=0.$

When $\alpha=0$, it is well-known that  \eqref{SQG} corresponds to the two-dimensional incompressible Euler equations. In this case, the unknown functions  $\omega=\omega(x,t)$ and $u=u(x,t)$ are the vorticity and the velocity field, respectively. When $\alpha=\frac12$, \eqref{SQG} corresponds to the surface quasi-geostrophic (SQG) equation which describes a famous approximation model of the nonhomogeneous fluid flow in a rapidly rotating 3D half-space (see \cite{[P87]}). In this case, the unknown functions  $\omega=\omega(x,t)$ and $u=u(x,t)$  represent  potential temperature and velocity field, respectively. When $0<\alpha<\frac12$, \eqref{SQG} is called the generalized (or modified) SQG equation.

The classical SQG and the generalized SQG equations  have been widely studied in the past years and much more progress has been  made. In \cite{[KYZ],[KRYZ]}, it is proved that the generalized SQG in half space $\R^{2+}=\{x=(x_1,x_2)| x_2>0\}$ has a unique local solution for vortex-patch initial data and will appear  singularity in finite time for some such kind of initial data when $0<\alpha<\frac{1}{24}$. This strongly implies that the SQG equation will appear finite-time singularity (even for smooth initial data) since the velocity has  less regularity when $\alpha=\frac12$. In fact, the singularity or formation of strong fronts has been suggested in \cite{[CMT]} although the rigorous derivations have not been reached so far. We note that the global well-posedness or blow-up of  the SQG equation is an important issue. As pointed out in \cite{[CMT]}, the singularity of the SQG equation will be similar to that of the three-dimensional Euler equations. Concerning the  dissipative SQG equation, which enjoys  a fractional dissipation term $-(-\Delta)^{\beta}\omega$ on the right hand side of the second equation of \eqref{SQG}, the global well-posedness in the critical case $\beta=\frac12$ was proved independently by Caffarelli and Vasseur \cite{[CV]} and by Kiselev, Nazarov and Volberg \cite{[KNV]} (see \cite{[CVi],[KN]} for different approaches). The proof of global regularity for the subcritical case $\beta>\frac12$ is standard (see e.g. \cite{[Con-Wu]}), while in the supercritical case $\beta<\frac12$  the global   regularity of small solutions is obtained (see e.g. \cite{[CMZ],[HK],[Ju],[Wu]}) and the slightly supercritical case is studied recently in \cite{[DKSV]}.

In this paper, our target is to show that, for any $T>0$, if $\{\omega^{\alpha_0}, u^{\alpha_0}\}$ defined on $[0,T]$ is the unique smooth solution  of \eqref{SQG}  for some $0\le\alpha_0\le \frac12$, then there exists $\delta>0$ such that when $0<\alpha<\frac12$ and  $0<|\alpha_0-\alpha|\le \delta$, the problem \eqref{SQG} with same initial data has also a unique smooth solution $\{\omega^\alpha,u^\alpha\}$ defined on $[0,T]$, where we denote the solution of problem \eqref{SQG} corresponding to $0\le \alpha\le \frac12$ by $\{\omega^\alpha, u^\alpha\}$ satisfying $u^\alpha=\nabla^{\perp}(-\Delta)^{-1+\alpha}\omega^\alpha$.  This is motivated by \cite{[C86]} in which it is shown that if the Cauchy problem to the three-dimensional incompressible Euler equations have a unique smooth solution on $[0,T]$, then the corresponding three-dimensional incompressible Navier-Stokes equations with the same initial data will also have a unique smooth solution defined on $[0,T]$ when the viscosity is suitably small. Furthermore, our result  implies that the construction of the possible singularity of the smooth solution of the Cauchy problem to the generalized SQG with $\alpha>0$  will be  subtle (see Corollary \eqref{Cor1+}), in comparison with the singularity result presented in  \cite{[KRYZ]}. To prove our main results, we consider the behavior of the difference between $u^\alpha$ and $u^{\alpha_0}$. Let us denote
 $$\overline{\omega}=\omega^{\alpha}-\omega^{\alpha_{0}}\quad\text{and}\quad\overline{u}=u^{\alpha}-u^{\alpha_{0}},$$
we easily find that the couple $(\overline{\omega},\,\overline{u})$ satisfies
\begin{equation*}
\overline{\omega}_{t}+(u^{\alpha_{0}}\cdot\nabla) \overline{\omega}+(\overline{u}\cdot\nabla) \overline{\omega}+(\overline{u}\cdot\nabla) \omega^{\alpha_{0}} =0.
\end{equation*}
With this equation, we can establish the following $H^s$-estimate of $\overline{\omega}(t)$:
\begin{align*}
\frac12\frac{\mathrm{d}}{\mathrm{d}t}\|\overline{\omega}(t)\|_{H^s}^2=&-\int_{\R^2} J^s(u^{\alpha_{0}}\cdot\nabla \overline{\omega})J^s\overline{\omega}\,\mathrm{d}x-\int_{\R^2} J^s(\overline{u}\cdot\nabla \overline{\omega})J^s\overline{\omega}\,\mathrm{d}x\\
&-\int_{\R^2} J^s(\overline{u}\cdot\nabla \omega^{\alpha_{0}})J^s\overline{\omega}\,\mathrm{d}x.
\end{align*}
The difficulty for us is  how to use the information of $\|\overline{\omega}(t)\|_{H^s}$ to control $\bar u$ in the nonlinear term. This requires us to consider the behavior in different scale in physical space or in different frequency regime in frequency space. Thus, we decompose $\bar u$ in two parts
\begin{equation}\label{diff-1-1}
\begin{split}
\overline{u}=&u^{\alpha}-u^{\alpha_{0}}
\\=&\nabla^{\perp}(-\Delta)^{-1+\alpha}\overline{\omega}
+(\nabla^{\perp}(-\Delta)^{-1+\alpha}-\nabla^{\perp}(-\Delta)^{-1+\alpha_0})\omega^{\alpha_0}
\\:=&\overline{u}_I+\overline{u}_{II},
\end{split}\end{equation}
where $\bar\omega=\omega^\alpha-\omega^{\alpha_0}$. In the above equalities \eqref{diff-1-1}, we see that the part $u_I$ is related to the difference between the solutions, while the part $u_{II}$ corresponds to the difference between the singular integrals. This observation together with the scale analysis enables us to establish some technique propositions, see Proposition \ref{uni-est}, Proposition \ref{add-0} and Proposition \ref{add1} which will play key roles in our proof of Theorem \ref{th2} and Theorem \ref{th3} respectively. More precisely,  in view of Riesz potential (see \eqref{Riesz}), the  term $\overline{u}_I$  can be estimated as
\begin{equation}\label{Riesz1}
\|\overline{u}_I\|_{L^q(\R^2)}\le C(\alpha)\|\bar\omega\|_{L^p(\R^2)}, \quad \frac1q=\frac1p-\frac{1-2\alpha}{2},
\end{equation}
for $0<\alpha<\frac12$. However, when $\alpha\to\frac12$, the constant $C(\alpha)$ in \eqref{Riesz1} will be unbounded. To overcome this difficulty, we establish  Propositions \ref{uni-est}- \ref{add1} to obtain some new uniform estimates as $\alpha\to \frac12$.

Our main results are stated as follows.
\begin{theorem}\label{th1}
Let $0< \alpha_{0}<\frac12.$  Let $\omega^{\alpha_0}$ be a solution of \eqref{SQG} for $0\leq t\leq T$ with $u^{\alpha_0}=\nabla^{\perp}(-\Delta)^{-1+\alpha_0}\omega^{\alpha_0}$ and $\omega_0\in H^{s+1},$ $s>2$. Then, there exists $\delta>0$ depending on $T$ and $\int_0^T \|\omega^{\alpha_0}\|_{H^{s+1}} \,\mathrm{d}t$ such that if $0<\alpha<\frac12$ and  $|\alpha_0-\alpha|\le \delta$,  the solution $\omega^{\alpha}$ to \eqref{SQG} with
$u^{\alpha}=\nabla^{\perp}(-\Delta)^{-1+\alpha}\omega^{\alpha}$ and the same initial data is smooth on $[0,T]$. Moreover, it holds that
\begin{equation*}
\big\|\omega^{\alpha}(t)-\omega^{\alpha_0}(t)\big\|_{H^{s}}\leq C\left(|\alpha_0-\alpha|^{1-2\alpha}+|\alpha_0-\alpha|^{1-2\alpha_0}+|\alpha_0-\alpha||\log|\alpha_0-\alpha||\right),
\end{equation*}
where $C>0$ is a constant depending on $T$ and $\int_0^T\|\omega^{\alpha_0}\|_{H^{s+1}} \,\mathrm{d}t$.
\end{theorem}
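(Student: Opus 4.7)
The plan is to run a continuation argument based on a quantitative stability estimate in $H^s$. Standard local well-posedness yields, for each $\alpha\in(0,\tfrac12)$ close to $\alpha_0$, a maximal smooth solution $\omega^{\alpha}\in C([0,T^\ast);H^{s+1})$. Setting $T^\ast$ to be the maximal existence time capped at $T$, the goal is to show that if $|\alpha-\alpha_0|\le \delta$ for $\delta$ small, then $\|\omega^\alpha\|_{H^{s+1}}$ remains bounded on $[0,T^\ast)$, which by the standard blow-up criterion forces $T^\ast=T$.

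The centerpiece is the $H^s$ energy identity for $\overline\omega$ already displayed in the introduction. For the transport term $\int J^s(u^{\alpha_0}\!\cdot\!\nabla\overline\omega)J^s\overline\omega\,\mathrm{d}x$, I would apply the Kato--Ponce commutator estimate together with $\nabla\cdot u^{\alpha_0}=0$, which, combined with the embedding $H^{s+1}\hookrightarrow W^{1,\infty}$ for $s>2$, yields a bound of the form $C\|u^{\alpha_0}\|_{H^{s+1}}\|\overline\omega\|_{H^s}^2$. The delicate pieces are the two terms containing $\overline u$, for which I would use the decomposition $\overline u=\overline u_I+\overline u_{II}$ from \eqref{diff-1-1}. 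For $\overline u_I=\nabla^\perp(-\Delta)^{-1+\alpha}\overline\omega$, Proposition \ref{uni-est} gives operator bounds that are \emph{uniform in $\alpha$ as $\alpha\to\tfrac12$}, so this contribution is absorbed into a term linear in $\|\overline\omega\|_{H^s}^2$ with constant depending only on $\|\omega^{\alpha_0}\|_{H^{s+1}}$. For $\overline u_{II}=(\nabla^\perp(-\Delta)^{-1+\alpha}-\nabla^\perp(-\Delta)^{-1+\alpha_0})\omega^{\alpha_0}$, Propositions \ref{add-0} and \ref{add1} furnish the quantitative smallness
\[
\|\overline u_{II}\|_{L^\infty}+\|\nabla \overline u_{II}\|_{L^\infty}\lesssim |\alpha-\alpha_0|^{1-2\alpha_0}+|\alpha-\alpha_0|\,\bigl|\log|\alpha-\alpha_0|\bigr|,
\]
and analogous $H^s$-level controls, where the right-hand side is exactly the rate appearing in the theorem. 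This is the source of the inhomogeneous forcing.

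Putting the pieces together, I would derive a differential inequality of the shape
\[
\frac{\mathrm{d}}{\mathrm{d}t}\|\overline\omega(t)\|_{H^s}^2\ \le\ A(t)\|\overline\omega(t)\|_{H^s}^2+B(t)\bigl(|\alpha-\alpha_0|^{1-2\alpha_0}+|\alpha-\alpha_0|\,\bigl|\log|\alpha-\alpha_0|\bigr|\bigr)^{2},
\]
with $A,B\in L^1(0,T)$ governed only by $\int_0^T\|\omega^{\alpha_0}\|_{H^{s+1}}\,\mathrm{d}t$. Since $\overline\omega|_{t=0}=0$, Gr\"onwall yields the stated stability bound on $[0,T^\ast)$. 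Finally, to convert this $H^s$-control into control of $\|\omega^\alpha\|_{H^{s+1}}$ (needed for continuation), I would combine the triangle inequality $\|\omega^\alpha\|_{H^s}\le \|\omega^{\alpha_0}\|_{H^s}+\|\overline\omega\|_{H^s}$ with an $H^{s+1}$ energy estimate on $\omega^\alpha$ alone that uses Beale--Kato--Majda-type logarithmic interpolation together with the $L^\infty$-in-time control of $\|\omega^\alpha\|_{L^\infty}$ (which follows from transport), so that a small $\delta$ prevents $\|\omega^\alpha\|_{H^{s+1}}$ from blowing up before time $T$, giving $T^\ast=T$.

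The main obstacle, in my view, is not the Gr\"onwall step but the two bracketed auxiliary propositions that are being taken for granted here: proving that $\overline u_I$ is controlled \emph{uniformly} as $\alpha\to\tfrac12$ (where the naive Riesz-potential constant $C(\alpha)$ in \eqref{Riesz1} blows up) requires exploiting cancellation in the kernel and a frequency localization; and proving the $|\alpha-\alpha_0|^{1-2\alpha_0}+|\alpha-\alpha_0||\log|\alpha-\alpha_0||$ rate for $\overline u_{II}$ requires splitting the kernel $\tfrac{x^\perp}{|x|^{2+2\alpha}}-\tfrac{x^\perp}{|x|^{2+2\alpha_0}}$ at scale $|\alpha-\alpha_0|$, using the pointwise bound $||x|^{-2\alpha}-|x|^{-2\alpha_0}|\lesssim |\alpha-\alpha_0|\,|\log|x||\,(|x|^{-2\alpha}+|x|^{-2\alpha_0})$ on one regime and a brute-force bound on the other, with the two regimes producing the two terms in the final rate.
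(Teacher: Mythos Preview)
Your overall architecture (continuation + $H^s$ stability for $\overline\omega$ + decomposition $\overline u=\overline u_I+\overline u_{II}$) matches the paper, but there is a genuine gap in the differential inequality you write down, and a mislabelling of which auxiliary results do what.

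\textbf{The nonlinear term is not linear in $\|\overline\omega\|_{H^s}^2$.} The term $\int J^s(\overline u\cdot\nabla\overline\omega)\,J^s\overline\omega\,\mathrm{d}x$ contains, after removing the transport part and restricting to $\overline u_I$, the commutator $\int\big(J^s(\overline u_I\!\cdot\!\nabla\overline\omega)-\overline u_I\!\cdot\!\nabla J^s\overline\omega\big)J^s\overline\omega\,\mathrm{d}x$. Since $\overline u_I=\nabla^\perp(-\Delta)^{-1+\alpha}\overline\omega$, any estimate of the type $\|J^s\overline u_I\|\lesssim\|\overline\omega\|_{H^s}$ produces a contribution of order $\|\overline\omega\|_{H^s}^3$, not $A(t)\|\overline\omega\|_{H^s}^2$. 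Consequently the inequality you state, with a \emph{linear} right-hand side and plain Gr\"onwall, does not close. In the paper this leads to a Riccati-type inequality
\[
\frac{\mathrm{d}}{\mathrm{d}t}\|\overline\omega\|_{H^s}\ \le\ C\|\omega^{\alpha_0}\|_{H^{s+1}}\|\overline\omega\|_{H^s}+C\|\overline\omega\|_{H^s}^{2}+F(t)\cdot(\text{rate}),
\]
and the smallness of $\delta$ is used precisely to handle the quadratic term via the ODE comparison Proposition~\ref{o} (an integrating-factor/barrier argument), not Gr\"onwall. A bootstrap ``as long as $\|\overline\omega\|_{H^s}\le 1$'' would repair your argument, but you must say so; as written the step fails.

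\textbf{Proposition bookkeeping.} Propositions~\ref{add-0} and~\ref{add1} are product and commutator estimates for $\overline u_I$ (they are what the paper uses for Theorem~\ref{th3}); they say nothing about $\overline u_{II}$. The smallness rate for $\overline u_{II}$ is obtained exactly by the kernel splitting you describe in your last paragraph, carried out in the paper as the decomposition $J^s\overline u_{II}=H_1+H_2+H_3$ at scales $\epsilon=|\alpha-\alpha_0|$ and $1$. Moreover, for Theorem~\ref{th1} one has $0<\alpha_0<\tfrac12$ and $\alpha$ close to $\alpha_0$, so there is no need for the uniform-in-$\alpha$ machinery of Proposition~\ref{uni-est}: the paper simply uses Hardy--Littlewood--Sobolev for $\overline u_I$, whose constant $C(\alpha)$ stays bounded because $\alpha$ is bounded away from $\tfrac12$.

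\textbf{Continuation step.} The paper's closing argument is more direct than what you sketch: an $H^{s+1}$ energy estimate on $\omega^\alpha$ alone gives $\tfrac{\mathrm{d}}{\mathrm{d}t}\|\omega^\alpha\|_{H^{s+1}}\le C\|\omega^\alpha\|_{H^{s+1}}\|\omega^\alpha\|_{H^s}$, and then Gr\"onwall with $\|\omega^\alpha\|_{H^s}\le\|\omega^{\alpha_0}\|_{H^s}+\|\overline\omega\|_{H^s}$ suffices; no BKM-type logarithmic interpolation is needed.
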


\begin{theorem}\label{th1+}
Let $\alpha_{0}=0.$  Let $\omega^{0}$ be a solution of \eqref{SQG} for $0\leq t\leq T$ with $u^{0}=\nabla^{\perp}(-\Delta)^{-1}\omega^{0}$ and $\omega_0\in H^{s+1}\cap L^a$ with $s>2$ and $1\le a<2$. Then, there exists $\delta>0$ depending on $T$ and $\int_0^T \|\omega^{0}\|_{H^{s+1}\cap L^a}\,\mathrm{d}t$ such that if $0<\alpha<\delta$,  the solution $\omega^{\alpha}$ to \eqref{SQG} with
$u^{\alpha}=\nabla^{\perp}(-\Delta)^{-1+\alpha}\omega^{\alpha}$ and the same initial data is smooth on $[0,T]$. Moreover, it holds that
\begin{equation*}
\big\|\omega^{\alpha}(t)-\omega^{0}(t)\big\|_{H^{s}}\leq C\left(\alpha^{1-2\alpha}+\alpha(1+|\log\alpha|)\right),
\end{equation*}
where $C>0$ is a constant depending on $T$ and $\int_0^T\|\omega^{0}\|_{H^{s+1}\cap L^a} dt$.
\end{theorem}

\begin{remark}\label{Rm1+}
The proof of Theorem \ref{th1+} is similar to that of Theorem \ref{th1} but a few estimates must be modified and the initial data is required to satisfy $\omega_0\in H^{s+1}\cap L^a$ with $s>2$ and $1\le a<2$. The additional condition $\omega_0\in L^a$ with  $1\le a<2$ can guarantee that the velocity field $u$ is well-defined and the interpolation inequality is available $($see the proof of Theorem \ref{th1+} in Section 5.2$)$.  Thanks to the incompressible condition $\nabla\cdot u=0$,  the solution will stay in $L^a$ as well.
\end{remark}

The following are concerned with the case $\alpha_0=\frac12$ which corresponds to the SQG equation.
\begin{theorem}\label{th2}
Let $0<\alpha<\alpha_{0}=\frac12.$  Let $\omega^{\alpha_0}$ be a solution of \eqref{SQG} for $0\leq t\leq T$ with $u^{\alpha_0}=\nabla^{\perp}(-\Delta)^{-1+\alpha_0}\omega^{\alpha_0}$ and $\omega_0\in H^{s+1}\cap L^1,$ $s>2$. Then, there exists $\delta>0$ depending on $T$ and $\int_0^T\|\omega^{\alpha_0}\|_{H^{s+1}\cap L^1}\,\mathrm{d}t$ such that if $0<\alpha_0-\alpha<\delta$, the solution $\omega^{\alpha}$ to \eqref{SQG} with
$u^{\alpha}=\nabla^{\perp}(-\Delta)^{-1+\alpha}\omega^{\alpha}$ and the same initial data is smooth on $[0,T]$. Moreover, it holds that
\begin{equation*}
\big\|\omega^{\alpha}(t)-\omega^{\alpha_0}(t)\big\|_{H^{s}}\leq C\left(\Big(\frac12-\alpha\Big)+\Big(\frac12-\alpha\Big)\log^2\Big(\frac12-\alpha\Big)\right),
\end{equation*}
where $C>0$ is a constant depending on $T$ and $\int_0^T\|\omega^{\alpha_0}\|_{H^{s+1}\cap L^1} dt$.
\end{theorem}

\begin{theorem}\label{th3}
	Let $0<\alpha<\alpha_{0}=\frac12.$  Let $\omega^{\alpha_0}$ be a solution of \eqref{SQG} for $0\leq t\leq T$ with $u^{\alpha_0}=\nabla^{\perp}(-\Delta)^{-1+\alpha_0}\omega^{\alpha_0}$ and $\omega_0\in H^{s+2},$ $s>2$. Then, there exists $\delta>0$ depending on $T$ and $\int_0^T \|\omega^{\alpha_0}\|_{H^{s+2}} \,\mathrm{d}t$ such that if $0<\alpha_0-\alpha<\delta$, the solution $\omega^{\alpha}$ to \eqref{SQG} with
	$u^{\alpha}=\nabla^{\perp}(-\Delta)^{-1+\alpha}\omega^{\alpha}$ and the same initial data is smooth on $[0,T]$. Moreover, it holds that
	\begin{equation*}
		\big\|\omega^{\alpha}(t)-\omega^{\alpha_0}(t)\big\|_{H^{s}}\leq C\left(\Big(\frac12-\alpha\Big)+\Big(\frac12-\alpha\Big)\log^2\Big(\frac12-\alpha\Big)\right),
	\end{equation*}
where $C>0$ is a constant depending on $T$ and $\int_0^T \|\omega^{\alpha_0}\|_{H^{s+2}} dt$.
\end{theorem}
\begin{remark}\label{Rm1}
In Theorem \ref{th1} and Theorem \ref{th1+}, we consider the case $0<\alpha_0<\frac12$ and $\alpha_0=0$ respectively.
In Theorem \ref{th2} and Theorem \ref{th3}, we deal with the case $\alpha_0=\frac12$.
 It is noted that in the proof of Theorem \ref{th1} and Theorem \ref{th1+},  the Hardy-Littlewood-Sobolev inequality $($see Lemma \ref{Hardy}$)$ will be used.  One point  is that the expression $\bar u_I$ in \eqref{diff-1-1} can be reduced to $I_{1-2\alpha}\bar\omega$, where $I_{1-2\alpha}$ is a  Riesz operator  $($see  \eqref{Riesz}$)$ which is bounded from $L^p$ to $L^q$ with $\frac1q=\frac1p-\frac{1-2\alpha}{n}$ satisfying $0<1-2\alpha<n$ and $1<p<q<\infty$. However,  it does not hold bounded uniformly with respect to $\alpha$ when $\alpha$ tends to $\frac12$. Hence, in the proof of Theorem \ref{th2} and Theorem \ref{th3} we can not use Hardy-Littlewood-Sobolev inequality directly. To prove Theorem \ref{th2}, we will establish some new and uniform estimates with respect to $\alpha$ on the singular integral \eqref{Int} $($see Proposition \ref{uni-est}$)$. To prove Theorem \ref{th3}, we will obtain some elegant estimates concerning $u_I$  with the help of Besov spaces of which definition is given in Appendix \ref{sec4} $($see Propositions \ref{add-0} and \ref{add1}$)$.
 \end{remark}

\begin{remark}\label{Rm2+}
As mentioned above, \eqref{SQG} becomes the two-dimensional incompressible Euler equations when $\alpha_0=0$, of which the global existence of smooth solutions has been known $($see \cite{Majda1} and references therein$)$. In comparison with the singularity for the patch solution with $0<\alpha<\frac{1}{24}$ in half space  obtained in \cite{[KRYZ]}, whether the patch or smooth solution  of the Cauchy problem  to \eqref{SQG} when $\alpha>0$ appears singularity in finite time remains open. Theorem \eqref{th1} implies that the possible blow-up time of the smooth solution to the Cauchy problem of  \eqref{SQG} with $\alpha>0$ can not be uniformly bounded when $\alpha\to 0+$.  More precisely, as a corollary of Theorem \ref{th1}, we have
\end{remark}

\begin{corollary}\label{Cor1+}
Let $T^*_\alpha>0$ the maximal existence time $($may be $+\infty$$)$ of the  solution $\omega^\alpha\in C([0,T]; H^{s+1}) (s>2)$  to \eqref{SQG} with $\alpha>0$. Then  $\displaystyle\limsup_{\alpha\to 0+}T^*_\alpha=+\infty$.
\end{corollary}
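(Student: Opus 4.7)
The plan is to argue by contradiction, combining the classical global regularity of the two-dimensional Euler equations with the continuity-in-$\alpha$ scheme behind Theorem \ref{th1}. Specifically, I would assume $\limsup_{\alpha\to 0+}T^*_\alpha=M<+\infty$ and fix $T=M+1$. Because $\alpha_0=0$ reduces \eqref{SQG} to the two-dimensional incompressible Euler equation, whose smooth solutions exist globally in $H^{s+1}$ (see \cite{Majda1}), the corresponding solution $\omega^0$ launched from the same initial datum $\omega_0\in H^{s+1}$ satisfies $\omega^0\in C([0,T];H^{s+1})$ and $\int_0^T\|\omega^0(\tau)\|_{H^{s+1}}\,d\tau<\infty$; this is the only external input beyond Theorem \ref{th1} itself.

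Next I would re-run the proof of Theorem \ref{th1} with $\alpha_0=0$ and $\omega^{\alpha_0}$ replaced by the Euler solution $\omega^0$. Writing $\overline{\omega}=\omega^\alpha-\omega^0$ and decomposing $\overline{u}=\overline{u}_I+\overline{u}_{II}$ exactly as in \eqref{diff-1-1}, I would verify that every ingredient survives: the Hardy--Littlewood--Sobolev constant $C(\alpha)$ in \eqref{Riesz1} is uniformly bounded for $\alpha$ in any compact subset of $[0,1/2)$, so the obstruction noted after \eqref{Riesz1} is absent here (it arises only as $\alpha\to\tfrac12$), and the remainder $\overline{u}_{II}=(\nabla^\perp(-\Delta)^{-1+\alpha}-\nabla^\perp(-\Delta)^{-1})\omega^0$ is controlled uniformly using only the $H^{s+1}$ regularity of $\omega^0$. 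Running the same $H^s$ energy estimate and Gr\"onwall closure that yields Theorem \ref{th1}, I expect to obtain
\[
\|\omega^\alpha(t)-\omega^0(t)\|_{H^s}\le C\bigl(|\alpha|+|\alpha|\,|\log|\alpha||\bigr),\qquad t\in[0,T],
\]
with a constant $C$ depending only on $T$ and $\int_0^T\|\omega^0\|_{H^{s+1}}\,d\tau$, in particular independent of $\alpha$.

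Finally I would close the contradiction by choosing $\delta>0$ so small that the right-hand side above is $\le 1$ whenever $0<\alpha\le\delta$. Then $\|\omega^\alpha(t)\|_{H^s}$ stays bounded on $[0,T]$, and propagating through the standard $H^{s+1}$ blow-up criterion for \eqref{SQG} shows that $\omega^\alpha$ extends smoothly past $T=M+1$, so $T^*_\alpha\ge M+1$ for every $0<\alpha\le\delta$, contradicting $\limsup_{\alpha\to 0+}T^*_\alpha=M$. The main obstacle in this plan lies in the second paragraph: the bookkeeping required to check line by line that the constants in the proof of Theorem \ref{th1} remain uniform as $\alpha_0\to 0$. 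Fortunately this is the benign end of the parameter range (the only divergence in the Hardy--Littlewood--Sobolev step is at $\alpha=\tfrac12$, not at $\alpha=0$), so I do not expect a genuine analytic difficulty, merely careful bookkeeping.
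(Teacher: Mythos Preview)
Your proposal is correct and follows essentially the same contradiction argument as the paper: assume the limsup is finite, use global regularity of the two-dimensional Euler equations on $[0,M+1]$, and invoke Theorem~\ref{th1} to propagate this to nearby $\alpha>0$. The paper's own proof simply cites Theorem~\ref{th1} directly (even though $\alpha_0=0$ is formally excluded from its statement), whereas you are more careful in noting that one must re-run the proof at $\alpha_0=0$ and check the constants---a point the paper glosses over but which, as you say, involves only routine bookkeeping since the Hardy--Littlewood--Sobolev constants degenerate only near $\alpha=\tfrac12$.
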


The rest of the paper is organized as follows. In Section \ref{pre}, we will present some basic facts  which will be needed later. In Section \ref{SI}, we will investigate a singular integral which can be viewed as an approximation of the Riesz transform. In Section \ref{BesovE}, we will obtain  nonlinear terms and commutator estimates  related to $u_I=\nabla^{\perp}(-\Delta)^{-1+\alpha}\overline{\omega}$ in \eqref{diff-1-1}. The proof of the main results will given in Section \ref{sec5}. In the end of the paper,  Appendix \ref{sec4} on the Littlewood-Paley decomposition, Besov spaces will be given.

\section{Preliminaries}\label{pre}
\setcounter{section}{2}\setcounter{equation}{0}

In this section, we present some basic analysis facts. First of all, we introduce
\[\Lambda^s=(-\Delta)^\frac{s}{2}\quad\text{and}\quad
J^s=(I-\Delta)^\frac{s}{2},\] where
 $$\widehat{\Lambda^sf}(\xi)=|\xi|^s\widehat{f}(\xi)\quad\text{and}\quad\widehat{J^sf}(\xi)
 =\big(1+|\xi|^2\big)^{\frac{s}{2}}\widehat{f}(\xi),~~ s\in \R.$$

\begin{definition}[\cite{[stein]}]
Let  $s\in \R$ and $1\leq p\leq \infty.$ We write
$$\|f\|_{W^{s,p}(\R^n)}:=\norm{J^s f}_{L^p(\R^n)}, ~~\|f\|_{\dot{W}^{s,p}(\R^n)}:=\norm{\Lambda^s f}_{L^p(\R^n)}.$$
The nonhomogeneous Sobolev space $W^{s,p}(\R^n)$ is defined as
$$
W^{s,p}(\R^n)=\{f\in \mathcal{S'}(\R^n):\|f\|_{W^{s,p}(\R^n)}<\infty\}.
$$
The homogeneous Sobolev space $\dot{W}^{s,p}(\R^n)$ is defined as
$$
\dot{W}^{s,p}(\R^n)=\{f\in \mathcal{S'}(\R^n):\|f\|_{\dot{W}^{s,p}(\R^n)}<\infty\}.
$$
Here $\mathcal{S'}$ is the Schwarz distributional function space.
\end{definition}
With this definition in hand, we give a commutator estimate and product estimate (see, e.g., Kenig, Ponce and Vega \cite{[KPV]}).
\begin{lemma}\label{commutator}
Let $s>0$ and $1<p<\infty.$ Then
\begin{equation}\label{com-1}
\|J^s(fg)\|_{L^p(\R^n)}\leq C(\|f\|_{W^{s,p_1}(\R^n)}\|g\|_{L^{p_2}}+\|g\|_{W^{s,p_3}(\R^n)}\|f\|_{L^{p_4}(\R^n)}),
\end{equation}
\begin{equation}\label{com-2}
\|J^s(fg)-fJ^sg\|_{L^p(\R^n)}\leq C(\|f\|_{W^{s,p_1}(\R^n)}\|g\|_{L^{p_2}(\R^n)}+\|g\|_{W^{s-1,p_3}(\R^n)})\|\nabla f\|_{L^{p_4}(\R^n)},
\end{equation}
where $\frac1p=\frac{1}{p_1}+\frac{1}{p_2}=\frac{1}{p_3}+\frac{1}{p_4}$ with $p_2,$ $p_4\in [1,\infty]$ and $p_1,$ $p_3\in (1,\infty)$, and $C^{'}s$ are constants depending on $s,$
$p_1,$ $p_2,$ $p_3$ and $p_4$. In addition, these inequalities remain valid when $J^s$ is replaced by $\Lambda^s$.
\end{lemma}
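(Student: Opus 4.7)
The plan is to prove both inequalities by Bony's paraproduct decomposition from Littlewood-Paley theory. Decompose the product as
$$fg = T_f g + T_g f + R(f,g),$$
with the low-high paraproduct $T_fg := \sum_j S_{j-1}f\,\Delta_j g$ and the resonant term $R(f,g):=\sum_{|j-k|\le 1}\Delta_j f\,\Delta_k g$. Each block $S_{j-1}f\,\Delta_j g$ is spectrally localized in an annulus of size $\sim 2^j$, so applying $J^s$ is essentially multiplication by $2^{js}$ on that block; this Bernstein-type gain is the engine of all the estimates that follow.

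For the product estimate \eqref{com-1}, I would bound the low-high piece via the Littlewood-Paley square function: $\|J^s T_fg\|_{L^p} \lesssim \bigl\|\bigl(\sum_j 2^{2js}|S_{j-1}f\,\Delta_j g|^2\bigr)^{1/2}\bigr\|_{L^p}$. Using the pointwise bound $|S_{j-1}f| \lesssim Mf$ (Hardy-Littlewood maximal function) and then the Fefferman-Stein vector-valued maximal inequality (valid for $1<p<\infty$), this is dominated by $\|Mf\|_{L^{p_4}}\bigl\|\bigl(\sum_j 2^{2js}|\Delta_j g|^2\bigr)^{1/2}\bigr\|_{L^{p_3}}\lesssim \|f\|_{L^{p_4}}\|g\|_{W^{s,p_3}}$. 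The symmetric piece $T_gf$ yields the other term $\|f\|_{W^{s,p_1}}\|g\|_{L^{p_2}}$. For the resonant remainder, the products $\Delta_j f\,\Delta_k g$ with $|j-k|\le 1$ are no longer localized to a single shell, so one writes $\Delta_\ell R(f,g) = \sum_{j\ge \ell-3} \Delta_\ell(\Delta_j f\,\widetilde{\Delta}_j g)$ and uses the stronger Bernstein estimate $\|\Delta_\ell(\cdots)\|_{L^p}\lesssim 2^{\ell n/p'}\|\cdots\|_{L^1}$ where needed, summing in $j$ and $\ell$ via Young's inequality for sequences.

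For the commutator estimate \eqref{com-2}, expand both sides by paraproduct:
$$J^s(fg)-fJ^sg = [J^s,T_f]g + \bigl(J^s T_g f - T_{J^s g} f\bigr) + \bigl(J^s R(f,g) - R(f,J^s g)\bigr).$$
The leading term $[J^s, T_f]g$ is a pseudodifferential commutator whose symbol, by a first-order Taylor expansion in $\xi$ around the frequency support of $\Delta_j g$, loses one power of $|\xi|$ and picks up one factor of $\nabla f$; quantitatively, this yields $\|[J^s,T_f]g\|_{L^p}\lesssim \|\nabla f\|_{L^{p_4}}\|g\|_{W^{s-1,p_3}}$ via the same Fefferman-Stein machinery. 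The middle difference reduces to a paraproduct estimate for $[J^s, S_{j-1}]g\cdot \Delta_j f$, again exhibiting a gain of one derivative. The last difference, involving the resonant parts, is the most favorable because a crude bound already absorbs into the stated right-hand side.

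The main obstacle is the rigorous treatment of the endpoint exponents $p_2$ or $p_4$ equal to $\infty$, where the Fefferman-Stein inequality fails; in those cases one replaces the square function by direct $\ell^\infty$ control of $\|S_{j-1}f\|_{L^\infty}\lesssim \|f\|_{L^\infty}$ and recovers summability from the geometric decay in $j$ built into $W^{s,p}$. Extending the result from $J^s$ to $\Lambda^s$ is then routine, since $\Lambda^s - J^s$ is a smoothing operator of every order on the high-frequency blocks $\Delta_j$ with $j\ge 0$, so the homogeneous version is identical on high frequencies and handled by Young's inequality at low frequencies.
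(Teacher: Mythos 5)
The paper does not actually prove this lemma: it is quoted verbatim from Kenig--Ponce--Vega \cite{[KPV]} and used as a black box, so there is no internal proof to compare against. Your paraproduct argument is therefore a genuinely different (and self-contained) route from the one the paper relies on --- the original KPV/Kato--Ponce proofs go through Coifman--Meyer multiplier theory and square-function characterizations of $W^{s,p}$, whereas your Bony-decomposition proof is the standard modern alternative, and it is in fact the same machinery the authors themselves deploy later in Propositions \ref{add-0} and \ref{add1} (frequency localization of $S_{j-1}f\,\Delta_j g$ in an annulus of size $2^j$, the lemma on annuli, and a first-order Taylor expansion of the convolution kernel of $J^s\widetilde{\Delta}_j$ to extract the factor $\nabla f$ in the commutator). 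The overall architecture --- square function plus maximal function for $T_fg$ and $T_gf$, Young's inequality for series for the remainder using $s>0$, and the three-term splitting $[J^s,T_f]g+(J^sT_gf-T_{J^sg}f)+(J^sR(f,g)-R(f,J^sg))$ for \eqref{com-2} --- is sound and would yield the stated estimates for $p_1,p_3\in(1,\infty)$ and $p_2,p_4\in(1,\infty]$, which covers every exponent configuration actually used in this paper.

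Two small points deserve correction before this could stand as a complete proof. First, the middle difference is $\sum_j\big(J^s(S_{j-1}g\,\Delta_jf)-S_{j-1}(J^sg)\,\Delta_jf\big)$, which is \emph{not} the commutator $[J^s,S_{j-1}]g\cdot\Delta_jf$ you describe (that would drop the term $J^s(S_{j-1}g\,\Delta_jf)-(J^sS_{j-1}g)\Delta_jf$); the correct and easier treatment is to bound $J^sT_gf$ and $T_{J^sg}f$ separately by $\|f\|_{W^{s,p_1}}\|g\|_{L^{p_2}}$, using Bernstein to write $\|S_{j-1}J^sg\|_{L^{p_2}}\lesssim 2^{js}\|g\|_{L^{p_2}}$ and shifting the weight $2^{js}$ onto $\Delta_jf$. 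No gain of a derivative is needed or available there. Second, $\Lambda^s-J^s$ is a Fourier multiplier of order $s-2$ on high frequencies, not a smoothing operator of every order; this is still strictly lower order, so your conclusion survives, but the justification as written is inaccurate. Finally, at the true endpoint $p_2=1$ or $p_4=1$ admitted by the statement, the Hardy--Littlewood maximal function is not bounded on $L^1$ and your square-function step breaks down; this is an issue with the lemma as transcribed in the paper rather than with your argument, since the applications only ever take these exponents equal to $\infty$ or to finite values larger than $1$.
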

We continue with the  Sobolev embedding theorem \cite{[MWZ]}.
\begin{lemma}\label{embedding1}
For $p_2\neq\infty,$ $W^{s_1,p_1}(\R^n)\hookrightarrow W^{s_2,p_2}(\R^n)$ if and only if
$$s_1-\frac{n}{p_1}\geq s_2-\frac{n}{p_2},~~\frac{1}{p_1}\geq\frac{1}{p_2}.$$

\end{lemma}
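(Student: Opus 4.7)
The plan is to prove the embedding $W^{s_1,p_1}\hookrightarrow W^{s_2,p_2}$ under the stated scaling/integrability conditions by reducing to a mapping property of the Bessel potential, and to prove the necessity of those conditions by two standard test-function arguments (translation for the integrability relation, dilation for the scaling relation). Throughout I will use the defining isometry $\|f\|_{W^{s,p}}=\|J^s f\|_{L^p}$ and the fact that $G_\sigma:=\mathcal{F}^{-1}\bigl((1+|\xi|^2)^{-\sigma/2}\bigr)$ is, for $\sigma>0$, a positive kernel with the asymptotics $G_\sigma(x)\lesssim |x|^{\sigma-n}$ near the origin and $G_\sigma(x)\lesssim e^{-c|x|}$ at infinity.

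For sufficiency, set $\sigma:=s_1-s_2$. The two hypotheses together force $\sigma\geq n/p_1-n/p_2\geq 0$. Given $f\in W^{s_1,p_1}$, put $g:=J^{s_1}f\in L^{p_1}$, so that $J^{s_2}f=J^{-\sigma}g=G_\sigma\ast g$. If $\sigma=0$ then $p_1=p_2$ and the embedding is the identity. If $\sigma>0$, split $G_\sigma=G_\sigma\mathbf{1}_{\{|x|\leq 1\}}+G_\sigma\mathbf{1}_{\{|x|>1\}}$. The singular near part is controlled by the Riesz potential $|x|^{\sigma-n}$, so Hardy-Littlewood-Sobolev (Lemma \ref{Hardy} in the paper) gives an $L^{p_1}\to L^{q}$ bound whenever $1/q=1/p_1-\sigma/n$ and $1<p_1<q<\infty$; the exponentially decaying far part lies in every $L^r$, so Young's inequality gives boundedness $L^{p_1}\to L^{p_1}\cap L^\infty$. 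Interpolating and using $1/p_1\geq 1/p_2\geq 1/p_1-\sigma/n$ (which is exactly our hypothesis) yields $\|G_\sigma\ast g\|_{L^{p_2}}\lesssim \|g\|_{L^{p_1}}$, as required.

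For necessity, assume the embedding constant $C$ exists. To get $1/p_1\geq 1/p_2$, fix a nonzero $\phi\in C_c^\infty(\R^n)$ supported in the unit ball and form $f_N(x):=\sum_{k=1}^N\phi(x-10k e_1)$. The supports are disjoint, so $\|f_N\|_{W^{s,p}}^p=N\|\phi\|_{W^{s,p}}^p$ for any admissible $(s,p)$, and the embedding inequality $N^{1/p_2}\|\phi\|_{W^{s_2,p_2}}\leq C N^{1/p_1}\|\phi\|_{W^{s_1,p_1}}$ forced to hold for all $N\in\mathbb{N}$ gives $1/p_2\leq 1/p_1$ (here the assumption $p_2\neq\infty$ in the lemma is essential). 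To get $s_1-n/p_1\geq s_2-n/p_2$, fix $\phi\in C_c^\infty\setminus\{0\}$ and dilate $\phi_\lambda(x):=\phi(\lambda x)$ with $\lambda\to\infty$. Since $\widehat{\phi_\lambda}$ concentrates at frequencies of order $\lambda$, the Bessel weight $(1+|\xi|^2)^{s/2}$ is comparable to $|\xi|^s\sim\lambda^s$ on that support, and a direct change of variables gives the high-frequency asymptotics $\|\phi_\lambda\|_{W^{s,p}}\asymp\lambda^{s-n/p}$ as $\lambda\to\infty$. Plugging this into the embedding yields $\lambda^{s_2-n/p_2}\lesssim\lambda^{s_1-n/p_1}$ for all large $\lambda$, which forces the scaling inequality.

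The only genuinely technical step is the Bessel mapping bound in the sufficiency direction; the subtlety is that the inequality $1/p_2\geq 1/p_1-\sigma/n$ in the lemma can be non-strict, so we cannot simply invoke HLS at the endpoint exponent — this is precisely why one exploits the exponential decay of $G_\sigma$ at infinity to trade extra integrability, which is not available for the pure Riesz potential. Once that splitting argument is in place, everything else is routine: the test-function arguments for necessity are elementary, and the algebra combining the two hypotheses to give $\sigma\geq 0$ is immediate.
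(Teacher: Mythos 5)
The paper offers no proof of this lemma at all: it is quoted verbatim from the reference \cite{[MWZ]}, so there is nothing to compare your argument against line by line. Your route --- sufficiency via the mapping properties of the Bessel kernel $G_\sigma$ (Hardy--Littlewood--Sobolev for the singular part near the origin, Young's inequality for the exponentially decaying tail), necessity via translated bumps and dilations --- is the standard textbook proof, and its overall architecture is sound for the regime the paper actually uses ($1<p_i<\infty$, $s_i\ge 0$). Note, though, that as literally stated the lemma would fail at $p_1=1$ at the endpoint $s_1-n=s_2-n/p_2$, exactly because HLS is only weak-type there; your proof silently assumes $p_1>1$, which is the intended scope.

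Three of your intermediate claims are false as written and need the usual repairs, though none is fatal. First, $G_\sigma(x)\lesssim|x|^{\sigma-n}$ near the origin only holds for $0<\sigma<n$; for $\sigma\ge n$ the kernel is logarithmic or bounded there, so you need a (trivial) case split. Second, the identity $\|f_N\|_{W^{s,p}}^p=N\|\phi\|_{W^{s,p}}^p$ is not true for $s\ne 0$: $J^s$ is nonlocal, so disjointness of the supports of the bumps does not make the $W^{s,p}$ norm additive. What is true is $\|f_N\|_{W^{s,p}}\asymp N^{1/p}$, and to get the lower bound you must space the bumps by a distance $L=L(\phi,s)$ large enough that the rapidly decaying tails of $J^s\phi$ do not cancel the main terms; a fixed spacing of $10$ is not justified for arbitrary $\phi$. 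Third, $\widehat{\phi_\lambda}(\xi)=\lambda^{-n}\widehat{\phi}(\xi/\lambda)$ does \emph{not} concentrate at frequencies of order $\lambda$ --- it spreads over all $|\xi|\lesssim\lambda$ including a neighbourhood of the origin, where $(1+|\xi|^2)^{s/2}\not\sim|\xi|^s$. The asymptotics $\|\phi_\lambda\|_{W^{s,p}}\asymp\lambda^{s-n/p}$ do hold for $s\ge 0$, but the correct justification is the equivalence $\|f\|_{W^{s,p}}\asymp\|f\|_{L^p}+\|\Lambda^s f\|_{L^p}$ together with the exact scaling of the homogeneous seminorm; for $s<0$ and $\int\phi\ne 0$ the asymptotics change and you would need a test function with Fourier support away from the origin.
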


The following lemma is the so called Hardy-Littlewood-Sobolev inequality of fractional integration \cite{[stein]}. We begin with definition of the Riesz potential $I_{\alpha}$.
\begin{definition}
Let $0<\alpha<n.$ The Riesz potential $I_{\alpha}=(-\Delta)^{-\frac{\alpha}{2}}$ is defined by
\begin{equation}\label{Riesz}
I_{\alpha}f(x)=\frac{1}{\gamma(\alpha)}\int_{\R^n}\frac{f(y)}{|x-y|^{n-\alpha}}\,\mathrm{d}y,
\end{equation}
with
$$\frac{1}{\gamma(\alpha)}=\pi^{\frac n2}2^\alpha \frac{\Gamma(\frac{\alpha}{2})}{\Gamma(\frac n2-\frac \alpha 2)}.$$
\end{definition}

\begin{lemma}\label{Hardy}
Let $0<\alpha<n,$ $1<p<q<\infty$, $\frac1q=\frac1p-\frac{\alpha}{n}.$
Then, there exists a constant $A_{p,q}$ depending on $p,$ $q$ such that
\begin{equation}\label{H}
\|I_{\alpha}f\|_{L^q(\R^n)}\leq A_{p,q}\norm{f}_{L^p(\R^n)}.
\end{equation}

\end{lemma}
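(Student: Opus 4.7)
The plan is to establish the pointwise bound known as Hedberg's inequality and then combine it with the Hardy--Littlewood maximal function estimate, which is the cleanest route to \eqref{H} and avoids dealing directly with weak-type interpolation.

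First I would split the Riesz potential into near and far pieces at radius $R>0$:
\begin{equation*}
I_\alpha f(x) = \frac{1}{\gamma(\alpha)}\int_{|x-y|\le R}\frac{f(y)}{|x-y|^{n-\alpha}}\,\mathrm{d}y + \frac{1}{\gamma(\alpha)}\int_{|x-y|>R}\frac{f(y)}{|x-y|^{n-\alpha}}\,\mathrm{d}y.
\end{equation*}
The near piece I would control by annular decomposition $\{2^{-k-1}R<|x-y|\le 2^{-k}R\}_{k\ge 0}$ and the uncentered Hardy--Littlewood maximal operator $M$; summing the geometric series gives a bound of the form $C\, R^{\alpha} Mf(x)$. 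For the far piece I would apply H\"older's inequality in $y$ with exponents $p$ and $p'$; since $(n-\alpha)p'>n$ (which is equivalent to $\alpha p<n$, guaranteed by $1<p<n/\alpha$ from the hypothesis $1<p<q<\infty$ with $1/q=1/p-\alpha/n$), the integral $\int_{|y|>R}|y|^{-(n-\alpha)p'}\mathrm{d}y$ converges and is controlled by $C\,R^{\alpha-n/p'}\|f\|_{L^p}$.

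Next I would optimize the sum $C_1 R^\alpha Mf(x)+C_2 R^{\alpha-n/p'}\|f\|_{L^p}$ in $R>0$. Choosing $R$ so that the two terms balance, namely $R^{n/p'}\sim \|f\|_{L^p}/Mf(x)$, yields the pointwise Hedberg inequality
\begin{equation*}
I_\alpha f(x)\le C\,\bigl(Mf(x)\bigr)^{p/q}\|f\|_{L^p}^{1-p/q},
\end{equation*}
valid wherever $Mf(x)>0$ (and trivially where $Mf(x)=0$, since then $f=0$ a.e.\ on a ball and the contribution is handled by a limiting argument).

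Finally I would take the $L^q$ norm of both sides and invoke the Hardy--Littlewood maximal inequality, which is the $L^p\to L^p$ boundedness of $M$ for $1<p<\infty$:
\begin{equation*}
\|I_\alpha f\|_{L^q}^q \le C\,\|f\|_{L^p}^{q-p}\int_{\R^n}(Mf)^p\,\mathrm{d}x \le C\,\|f\|_{L^p}^{q-p}\|f\|_{L^p}^p = C\,\|f\|_{L^p}^q,
\end{equation*}
which gives \eqref{H} with a constant $A_{p,q}$ depending only on $p$, $q$, $\alpha$, and $n$. The main obstacle is the far-piece integrability condition: it requires $\alpha p<n$, which is exactly ensured by $p<q<\infty$, so the hypotheses cannot be weakened within this strong-type formulation. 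The endpoint cases $p=1$ (only weak-type $(1,n/(n-\alpha))$ holds) and $q=\infty$ are excluded for precisely this reason, and no additional subtlety arises within the stated range.
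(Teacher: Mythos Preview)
Your argument via Hedberg's pointwise inequality is correct and is one of the standard modern proofs of the Hardy--Littlewood--Sobolev inequality. Note, however, that the paper does not supply its own proof of this lemma: it is quoted directly from Stein's book as a classical result, so there is nothing in the paper to compare your argument against at the level of method.

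One small slip to fix: in the far-piece bound and in the balancing step you have written the exponent with $p'$ where it should be $p$. After H\"older, the far piece is controlled by
\[
\|f\|_{L^p}\Big(\int_{|z|>R}|z|^{-(n-\alpha)p'}\,\mathrm{d}z\Big)^{1/p'}
= C\,R^{\,n/p'-(n-\alpha)}\|f\|_{L^p}
= C\,R^{\,\alpha-n/p}\|f\|_{L^p},
\]
so the optimal radius satisfies $R^{n/p}\sim \|f\|_{L^p}/Mf(x)$, not $R^{n/p'}$. With this correction the computation closes exactly as you wrote, yielding $I_\alpha f(x)\le C\,(Mf(x))^{p/q}\|f\|_{L^p}^{1-p/q}$ and hence \eqref{H} after applying the $L^p$ boundedness of the maximal operator.
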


\begin{remark}
It is noted that the constant $A_{p,q}$ is unbounded as $\alpha\to 0$ here.
\end{remark}
The following is an elementary result from \cite{[C86]} in which the case $m=1$ is proved.
\begin{proposition}\label{o}
Let $T>0,$ $G>0$ and $m>0$ be given constants and let $F(t)$ be a nonnegative continuous function on $[0,T).$ Let $\nu_0$
be defined by $$\nu_0=\frac{1}{4m(2m TG)^\frac1m\int_0^T F(t)\,\mathrm{d}t}.$$ Then, for all $0<\nu\leq\nu_0,$ all nonnegative solution $y(t)$ of
the system \begin{equation}\label{ODE}
\left\{\ba
&\frac{\mathrm{d}\,y(t)}{\mathrm{d}t}\leq \nu F(t)+Gy(t)^{1+m}\\
&y(0)=0\ea\ \right.
\end{equation} is uniformly bounded on $[0,T)$ and
 \begin{equation}\label{ODE-0}
y(t)\leq\min\bigg\{\frac{4^{\frac1m}-1}{(2m TG)^\frac1m},\,\, 4m\big(4^\frac1m-1\big)\nu\int_0^T F(t)\,\mathrm{d}t\bigg\}.
\end{equation}
\end{proposition}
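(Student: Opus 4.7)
The plan is to follow the Riccati-type argument of \cite{[C86]} for $m=1$ and extend it to general $m>0$ via a Bihari-type nonlinear comparison. Since $y(0)=0$, integrating the differential inequality $y'\le\nu F+Gy^{1+m}$ on $[0,t]$ yields
\[y(t) \le A + G\int_0^t y(s)^{1+m}\,\mathrm{d}s, \qquad A := \nu \int_0^T F(s)\,\mathrm{d}s.\]
Setting $Y(t) := A + G\int_0^t y(s)^{1+m}\,\mathrm{d}s$, we have $y\le Y$ directly, together with $Y(0)=A$ and $Y' = Gy^{1+m}\le GY^{1+m}$, which is a scalar autonomous inequality that can be integrated explicitly.

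Indeed, from $\tfrac{\mathrm{d}}{\mathrm{d}t}(Y^{-m}) = -mY^{-m-1}Y'\ge -mG$ one obtains $Y(t)^{-m}\ge A^{-m}-mGt$, and hence
\[y(t)\le Y(t) \le \frac{A}{(1-mGt\,A^m)^{1/m}}\]
on any subinterval where the denominator stays positive. The hypothesis $\nu\le\nu_0$ translates to $A\le \frac{1}{4m(2mTG)^{1/m}}$, so $mGT\,A^m \le \frac{1}{2(4m)^m}$. A short calculation shows that $(4m)^m$ attains its minimum $e^{-1/(4e)}>\tfrac12$ at $m=1/(4e)$, so the denominator above stays bounded away from $0$ throughout $[0,T]$ and the explicit bound is finite.

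To recover the stated form, I would note that $M_1:=\frac{4^{1/m}-1}{(2mTG)^{1/m}} = 4m(4^{1/m}-1)\cdot \nu_0\!\int_0^T F\,\mathrm{d}t$ while $M_2 := 4m(4^{1/m}-1)\cdot\nu\!\int_0^T F\,\mathrm{d}t$; under $\nu\le\nu_0$ one has $M_2\le M_1$, so it suffices to prove $y(t)\le M_2$. This reduces to verifying the scalar inequality
\[(1-\xi)^{-1/m}\le 4m(4^{1/m}-1)\qquad \text{for } 0\le \xi\le \tfrac{1}{2(4m)^m}.\]
I expect this uniform-in-$m$ elementary inequality to be the main obstacle: it is genuinely delicate and appears to need a regime split (for $m\le 2$ one can use $(4^{1/m}-1)^m\ge 1$ together with $(4m)^m\ge \tfrac12$; for larger $m$ one exploits that $(4m)^m$ grows polynomially while $(4^{1/m}-1)^m$ is comparable to $(\tfrac{\ln 4}{m})^m$ so that the product $[4m(4^{1/m}-1)]^m$ stays uniformly bounded below). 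Once this scalar inequality is in place, the bounds $y(t)\le M_2\le M_1$ follow at once from the Bihari estimate above.
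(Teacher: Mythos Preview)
Your Bihari-comparison approach is genuinely different from the paper's, and it is correct up to the point where you reduce to the scalar inequality
\[
(1-\xi)^{-1/m}\le 4m\bigl(4^{1/m}-1\bigr)\qquad\text{for }0\le\xi\le\tfrac{1}{2(4m)^m},
\]
but you then leave this inequality essentially unproved. You call it ``the main obstacle,'' say it ``appears to need a regime split,'' and give only heuristic indications for each regime. That is a real gap: the entire quantitative conclusion \eqref{ODE-0} rests on this step, and none of your suggested pieces (e.g.\ ``$(4m)^m\ge\tfrac12$'' combined with ``$(4^{1/m}-1)^m\ge1$'' for $m\le2$) actually closes it, since they only yield $b(a-\tfrac12)\ge\tfrac12$ rather than $\ge1$ in your notation $a=(4m)^m$, $b=(4^{1/m}-1)^m$. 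The inequality does appear to be true (the worst case is $m\to0^+$, where the left side tends to $2$), but a clean uniform-in-$m$ argument is still owed.

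The paper sidesteps this difficulty completely by a different device: instead of integrating $y'\le\nu F+Gy^{1+m}$ directly, it first divides by $\bigl(1+(G/\sigma)^{1/(m+1)}y\bigr)^{1+m}$ with a free parameter $\sigma>0$. This makes the nonlinear term bounded by the constant $\sigma$, and integration gives
\[
\bigl(1+(G/\sigma)^{1/(m+1)}y\bigr)^{-m}\ \ge\ 1-m(G/\sigma)^{1/(m+1)}\Bigl(\nu\!\int_0^T\!F+\sigma T\Bigr).
\]
Choosing $\sigma=\min\bigl\{(2mT)^{-1-1/m}G^{-1/m},\ (4m\nu\!\int F)^{1+m}G\bigr\}$ forces the right-hand side to be at least $\tfrac14$, whence $y\le(4^{1/m}-1)(\sigma/G)^{1/(m+1)}$; the two branches of the minimum then yield the two bounds in \eqref{ODE-0} directly. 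The constants $4m$, $4^{1/m}-1$, and $(2mTG)^{1/m}$ thus \emph{emerge} from balancing $\sigma T$ against $(\sigma/G)^{1/(m+1)}$, rather than having to be verified after the fact.

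In short: your route is valid in outline but incomplete at the decisive step, whereas the paper's weighted-integration trick produces the stated bound with no residual inequality to check. If you want to keep your argument, you must supply a full proof of the scalar inequality; otherwise, adopting the $\sigma$-trick is both shorter and cleaner.
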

\begin{proof}
Let us define $$\sigma=\min\bigg\{\frac{1}{(2mT)^{1+\frac1m}G^{\frac1m}}, \Big(4m\nu\int_0^T F(t)\,\mathrm{d}t\Big)^{1+m}G\bigg\}.$$
Dividing the first equation of  \eqref{ODE} by $\big(1+(\frac{G}{\sigma})^{\frac{1}{m+1}}y\big)^{1+m}$  yields
\begin{equation}
\frac1m\Big(\frac{\sigma}{G}\Big)^{\frac{1}{m+1}}\frac{\mathrm{d}}{\mathrm{d}t}\bigg(\frac{1}{\big(1+(\frac{G}{\sigma})^{\frac{1}{m+1}}y\big)^{m}}\bigg)\geq -\nu F(t)-\sigma.
\end{equation}
By integrating from $0$ to $t$, we obtain
\begin{equation}\label{ODE-1}
\frac1m\Big(\frac{\sigma}{G}\Big)^{\frac{1}{m+1}}\bigg(\frac{1}{(1+(\frac{G}{\sigma})^{\frac{1}{m+1}}y)^{m}}\bigg)\geq
 \frac1m\Big(\frac{\sigma}{G}\Big)^{\frac{1}{m+1}}-\nu \int_0^T F(t)\,\mathrm{d}t-\sigma T.
\end{equation}
The choice $\sigma\leq \frac{1}{(2mT)^{1+\frac1m}G^{\frac1m}}$ implies $\sigma T\leq \frac{1}{2m}(\frac{\sigma}{G})^{\frac{1}{m+1}}.$

For $\nu\leq \nu_0,$ we have
$$\nu \int_0^T F(t)\,\mathrm{d}t \leq  \frac{1}{4m}\Big(\frac{\sigma}{G}\Big)^{\frac{1}{m+1}}.$$
Indeed, if $\sigma=\big(4m\nu\int_0^T F(t)\,\mathrm{d}t\big)^{1+m}G,$ the last inequality is indeed an equality and if $\sigma= \frac{1}{(2mT)^{1+\frac1m}G^{\frac1m}},$
it follows from $$\nu \int_0^T F(t)\,\mathrm{d}t \leq\nu_0 \int_0^T F(t)\,\mathrm{d}t=\frac{1}{4m(2m TG)^\frac1m}=\frac{1}{4m}\Big(\frac{\sigma}{G}\Big)^{\frac{1}{m+1}}.$$
Thus, we get by \eqref{ODE-1} that
 \begin{equation*}
\frac{1}{\big(1+(\frac{G}{\sigma})^{\frac{1}{m+1}}y\big)^{m}}\geq\frac14
\end{equation*}
which implies \eqref{ODE-0}.
\end{proof}
\section{Estimates on A Singular Integral}\label{SI}
\setcounter{section}{3}\setcounter{equation}{0}

In this section, we present some new results on a singular integral which will be needed in the proof of Theorem \ref{th2}. We denote
\begin{equation}\label{T-operator}
Tf(x)=K*f(x)=\int_{\R^n} K(x-y)f(y) dy \quad\text{with}\quad K(x)=\frac{x}{|x|^{n+1-\beta}},\quad 0<\beta<n,
\end{equation}
where $x\in \R^n$.

Let \begin{equation}\label{Chi-L}
\chi_\lambda(s)=\chi(\lambda s).
\end{equation}
where $\chi(s)\in C_0^\infty(\R)$ is the usual smooth cutting-off function which is defined as
$$
\chi(s)=
\left\{
\begin{array}{ll}
1, \quad& |s|\le 1,\\[3mm]
0,\quad & |s|\ge 2,
\end{array}
\right.
$$
satisfying $|\chi'(s)|\le 2$.

Setting
\begin{eqnarray}
&& T_1f(x):=K_1*f(x)\quad\text{with}\quad K_1(x)=K(x)\chi_\beta(|x|),\label{T1-operator}\\
&& T_2f(x):=K_2*f(x)\quad\text{with}\quad K_2(x)=K(x)(1-\chi_\beta(|x|)).\label{T2-operator}
\end{eqnarray}
Then we have the following proposition which  holds for general $n$-dimensional case.
\begin{proposition}\label{uni-est}
 There exists a constant $C=C(n,s)$ independent of $\beta$ such that
\begin{equation}\label{T1-1}
\|T_1f\|_{H^s(\R^n)}\leq C\|f\|_{H^s(\R^n)},\quad \, s>0, \,\,0<\beta<n;
\end{equation}
\begin{equation}\label{T2-1}
\|T_2f\|_{L^2(\R^n)}\leq C\frac{\beta^{\frac n2}}{\sqrt{n-2\beta}}\|f\|_{L^1(\R^n)},  \,\,0<\beta<\frac n2;
\end{equation}
\begin{equation}\label{T2-2}
\|T_2f\|_{\dot{H}^s(\R^n)}\leq C(\frac{\beta}{1-\beta}+\frac{2^\beta-1}{\beta}\beta^{-\beta})\|f\|_{\dot{H}^{s-1}(\R^n)}, \quad s\geq1,\,\, 0<\beta<1.
\end{equation}
\end{proposition}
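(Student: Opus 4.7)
The three estimates are independent. I would dispatch (ii) by Young's inequality, reduce (iii) to an $L^1$-estimate on $\nabla K_2$ via Plancherel, and save the main effort for the delicate (i). For (ii), the support and pointwise bound $|K_2(x)|\le |x|^{-(n-\beta)}\mathbf{1}_{\{|x|\ge 1/\beta\}}$ give, in polar coordinates,
$$
\|K_2\|_{L^2}^2\le |S^{n-1}|\int_{1/\beta}^{\infty}r^{2\beta-n-1}\,\mathrm{d}r=\frac{|S^{n-1}|}{n-2\beta}\,\beta^{n-2\beta}\le \frac{C\beta^{n}}{n-2\beta},
$$
where the last step uses the elementary bound $\beta^{-2\beta}\le e^{2/e}$; then $\|T_2 f\|_{L^2}\le\|K_2\|_{L^2}\|f\|_{L^1}$ yields (ii).

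For (iii), Plancherel rewrites $\|T_2 f\|_{\dot H^s}\le \||\xi|\widehat{K_2}\|_{L^\infty}\|f\|_{\dot H^{s-1}}$, and from $i\xi_j\widehat{K_2}=\widehat{\partial_j K_2}$ together with $\|\widehat g\|_{L^\infty}\le\|g\|_{L^1}$ I reduce to estimating $\|\nabla K_2\|_{L^1}$. Writing $\nabla K_2=(\nabla K)(1-\chi_\beta)+K\otimes\nabla(1-\chi_\beta)$, the first summand contributes $C\int_{|x|\ge 1/\beta}|x|^{-(n+1-\beta)}\mathrm{d}x=C\beta^{1-\beta}/(1-\beta)\le C\beta/(1-\beta)$, while the second, supported in the annulus $\{1/\beta\le|x|\le 2/\beta\}$ with $|\nabla(1-\chi_\beta)|\le 2\beta$, contributes $C(2^\beta-1)\beta^{-\beta}\le C\frac{2^\beta-1}{\beta}\beta^{-\beta}$ (using $1/\beta\ge 1$ on $\beta<1$). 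Adding these gives (iii).

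The main obstacle is (i). Plancherel reduces it to the uniform bound $\|\widehat{K_1}\|_{L^\infty(\R^n)}\le C$ for $\beta\in(0,n)$, which cannot come from Young's inequality ($\|K_1\|_{L^1}$ diverges as $\beta\to 0^+$) nor directly from the symbol $\widehat K(\xi)=c\,i\xi/|\xi|^{1+\beta}$ (unbounded near $\xi=0$). My plan is to exploit the oddness of $K_1$, which guarantees $\widehat{K_1}(0)=0$ and the representation
$$
\widehat{K_1}(\xi)=\int_{\R^n}K_1(x)\bigl(e^{-ix\cdot\xi}-1\bigr)\,\mathrm{d}x,
$$
and then split the $x$-integral at $|x|=1/|\xi|$. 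On the inner piece I apply $|e^{-ix\cdot\xi}-1|\le |x||\xi|$ together with $|xK_1(x)|\le |x|^{-(n-1-\beta)}$, producing a contribution of order $|\xi|^{-\beta}/(\beta+1)$ which is uniformly controlled by $|\xi|^{-\beta}\le(2/\beta)^\beta\le e^{2/e}$ in the regime where the inner piece is not trivially small. On the complementary outer piece (nonempty only when $|\xi|\ge\beta/2$), the integrand is smooth away from the origin and I would integrate by parts twice using $e^{-ix\cdot\xi}=-|\xi|^{-2}\Delta_x e^{-ix\cdot\xi}$, trading the gain $|\xi|^{-2}$ against $\|\Delta K_1\|_{L^1}$ on the annulus $\{1/|\xi|\le|x|\le 2/\beta\}$. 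The technical heart of the argument is the bookkeeping of the $\beta$-powers in these two regimes; the oddness of $K$ is indispensable, being the cancellation that prevents any residual logarithmic divergence as $\beta\to 0^+$.
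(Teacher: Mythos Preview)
Your treatment of (ii) and (iii) matches the paper's: both dispatch (ii) via $\|T_2 f\|_{L^2}\le\|K_2\|_{L^2}\|f\|_{L^1}$, and both reduce (iii) to the $L^1$ norm of $\nabla K_2$ (the paper phrases this as Young's inequality applied to $\partial_i\Lambda^{s-1}T_2f=(\partial_i K_2)*\Lambda^{s-1}f$, splitting $\partial_i K_2$ into exactly your two pieces).

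For (i), both you and the paper reduce to the uniform multiplier bound $\|\widehat{K_1}\|_{L^\infty}\le C$, split the $x$-integral at $|x|=1/|\xi|$, and handle the inner piece by $|e^{-ix\cdot\xi}-1|\le|x|\,|\xi|$ together with the oddness of $K_1$. The approaches diverge on the outer piece $\{1/|\xi|\le|x|\le 2/\beta\}$. The paper uses the Calder\'on--Zygmund translation trick: choosing $z$ proportional to $\xi/|\xi|^2$ so that $e^{-iz\cdot\xi}=-1$, the outer integral becomes $\tfrac12\int\bigl(K_1(x)-K_1(x-z)\bigr)e^{-ix\cdot\xi}\,\mathrm{d}x$ plus several annular mismatch terms where $x$ and $x-z$ lie on opposite sides of the boundary spheres; the difference gains one power of $|x|^{-1}$ by the mean value theorem, and each mismatch is bounded directly. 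This costs a fair amount of case analysis (the paper breaks the outer piece into seven sub-terms $I_1,\ldots,I_4,J,K,L$).

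Your Green's-identity alternative is legitimate and conceptually cleaner, but as written it has a gap: integrating $\Delta_x$ by parts on the region $\{|x|\ge 1/|\xi|\}$ produces surface terms on the sphere $\{|x|=1/|\xi|\}$, namely
\[
\int_{|x|=1/|\xi|}K_1\,\partial_\nu e^{-ix\cdot\xi}\,\mathrm{d}S\qquad\text{and}\qquad\int_{|x|=1/|\xi|}(\partial_\nu K_1)\,e^{-ix\cdot\xi}\,\mathrm{d}S,
\]
each of order $|\xi|^{2-\beta}$ and hence contributing $O(|\xi|^{-\beta})$ after the factor $|\xi|^{-2}$. These are uniformly bounded for $|\xi|\ge\beta/2$, so the repair is immediate, but the outer piece is \emph{not} literally controlled by $|\xi|^{-2}\|\Delta K_1\|_{L^1(\text{annulus})}$ alone. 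Once the boundary terms are recorded, your argument closes and is somewhat shorter than the paper's; the paper's translation trick trades those boundary terms for annular mismatch regions, which amounts to the same bookkeeping in different packaging.
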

\begin{remark}\label{Rm2-1}
When $n=2$, the result of Proposition \ref{uni-est} holds true if $K(x)$ in \eqref{T-operator} is replaced by $ K(x)=\frac{x^\perp}{|x|^{3-\beta}}$.
\end{remark}

\begin{remark}
It is emphasized that the constants $C$ is independent of $\beta$, and  what is more,
$\frac{\beta^{\frac n2}}{\sqrt{n-2\beta}}$ in \eqref{T2-1} is sufficiently small, $\frac{\beta}{1-\beta}+\frac{2^\beta-1}{\beta}\beta^{-\beta}$ is uniformly bounded
in \eqref{T2-2} when $\beta$ tends to zero.
\end{remark}
\begin{remark}
  When $\beta=0$, it follows from \eqref{T-operator} that $Tf=\mathcal{R} f$ (in the sense that the integral takes principle values), where $\mathcal{R}$ is a Riesz transformation which is a strong $(p,p)$ type operator with $1<p<\infty$, that is,
  \begin{equation}\label{Riesz0}
  \|\mathcal{R} f\|_{L^p}\le C\|f\|_{L^p}, \ 1<p<\infty
   \end{equation}
  for some constant $C>0$. By Proposition \ref{uni-est}, it holds
\begin{equation}\label{Riesz2}
\|Tf\|_{L^2}\le C(\|f\|_{L^2}+\beta\|f\|_{L^1}),
\end{equation}
where $C>0$ is an absolute constant. This means that the estimate \eqref{Riesz2} recovers the corresponding one in \eqref{Riesz0} with $p=2$.
\end{remark}
\begin{proof}[Proof of Proposition \ref{uni-est}]
We firstly prove \eqref{T2-1} and \eqref{T2-2}.
Note that
\begin{equation*}
T_2f(x)=\int_{\R^n}\frac{x-y}{|x-y|^{n+1-\beta}}\big(1-\chi_\beta(|x-y|)\big)f(y)\,\mathrm{d}y.
\end{equation*}
For $0<\beta<\frac n2$, we have
\begin{equation*}
\begin{split}
\big\|T_2f\big\|_{L^2(\R^n)}&\leq\Big\|\int_{|x-y|\geq\frac{1}{\beta}}\frac{1}{|x-y|^{n-\beta}}|f(y)|\,\mathrm{d}y\Big\|_{L^2(\R^n)}
\\&\leq\|f\|_{L^1(\R^n)}\Big(\int_{|z|\geq\frac{1}{\beta}}\frac{1}{|z|^{2(n-\beta)}}\,\mathrm{d}z\Big)^{\frac12}
\\&\leq\frac{C}{\sqrt{n-2\beta}}\beta^{\frac n2-\beta}\|f\|_{L^1(\R^n)}.
\end{split}
\end{equation*}
Since
$$\lim_{\beta\to0+}\beta^{-\beta}=1,$$
there exists an absolute constant $C(n)>0$ such that, for any $0\leq\beta<\frac n2,$
\begin{equation*}\begin{split}
\big\|T_2f\big\|_{L^2(\R^n)}&\leq  C\frac{\beta^{\frac n2}}{\sqrt{n-2\beta}}\big\|f\big\|_{L^1(\R^n)}.
\end{split}
\end{equation*}
This means \eqref{T2-1}.

To prove \eqref{T2-2}, we note that for $s\ge 1$ and $i=1, 2, \cdots, n$,
\begin{equation}\label{T2-E}
 \begin{split}
\partial_i\Lambda^{s-1}T_2f(x)=&\int_{\R^n}\partial_i\Big(\frac{x-y}{|x-y|^{n+1-\beta}}\big(1-\chi_\beta(|x-y|)\big)\Big)\Lambda^{s-1}_yf(y)\,\mathrm{d}y
\\=&\int_{|x-y|\geq\frac{1}{\beta}}\partial_i\Big(\frac{x-y}{|x-y|^{n+1-\beta}}\Big)\big(1-\chi_\beta(|x-y|)\big)\Lambda^{s-1}_yf(y)\,\mathrm{d}y\\&
-\int_{\frac{1}{\beta}\leq|x-y|\leq\frac{2}{\beta}}\frac{x-y}{|x-y|^{n+1-\beta}}\partial_i\chi_\beta(|x-y|)\Lambda^{s-1}_yf(y)\,\mathrm{d}y
\\:=&J_1+J_2,
\end{split}
\end{equation}
where $\partial_i=\partial_{x_i}, i=1,2,\ldots, n$.

Then, for $0<\beta<1$, we obtain
\begin{equation}\label{J1}
 \begin{split}
\big\|J_1\big\|_{L^2(\R^n)}&\leq C\Big\|\int_{|x-y|\geq\frac{1}{\beta}}\frac{1}{|x-y|^{n+1-\beta}}|\Lambda^{s-1}_yf(y)|\,\mathrm{d}y\Big\|_{L^2(\R^n)}
\\&\leq C\big\|\Lambda^{s-1}f\big\|_{L^2(\R^n)}\int_{|z|\geq\frac{1}{\beta}}\frac{1}{|z|^{n+1-\beta}}\,\mathrm{d}z
\\&\leq\frac{C}{1-\beta}\beta^{1-\beta}\big\|\Lambda^{s-1}f\big\|_{L^2(\R^n)}.
\end{split}
\end{equation}
The term $J_2$ can be bounded as
\begin{equation}\label{J2}
 \begin{split}
\big\|J_2\big\|_{L^2(\R^n)}&\leq C\Big\|\int_{\frac{1}{\beta}\leq|x-y|\leq\frac{2}{\beta}}\frac{1}{|x-y|^{n-\beta}}|\Lambda^{s-1}_yf(y)|\,\mathrm{d}y\Big\|_{L^2(\R^n)}
\\&\leq C\big\|\Lambda^{s-1}f\big\|_{L^2(\R^n)}\int_{\frac{1}{\beta}\leq|z|\leq\frac{2}{\beta}}\frac{1}{|z|^{n-\beta}}\,\mathrm{d}z
\\&\leq C\frac{2^\beta-1}{\beta}\beta^{-\beta}\big\|\Lambda^{s-1}f\big\|_{L^2(\R^n)}.
\end{split}
\end{equation}
Substituting \eqref{J1} and \eqref{J2} into \eqref{T2-E} and using the fact that $$\|\Lambda^sT_2f\|_{L^2}\le \|\nabla\Lambda^{s-1}T_2f\|_{L^2},$$ we finish the proof of \eqref{T2-2}.

Now we turn to prove \eqref{T1-1}. To do this, it suffice to show that there exists an absolute constant $C>0$ independent $\beta$ such that
\begin{equation}\label{K1-F}
\big\|\widehat{K_1}(y)\big\|_{L^\infty(\R^n)}\leq C, \, \, 0<\beta<n.
\end{equation}
In fact, since $\int_{\mathbb{S}^1} K_1(x) \,\mathrm{d}s(x)=0$ (here $\mathbb{S}^1$ is the unit sphere surface in $\mathbb{R}^n$) and $K_1(x)$ is supported on $|x|\le \frac 2\beta$, we have
\begin{equation}
\widehat{K_1}(y)=\int_{\R^n} e^{2\pi ix\cdot y}K_1(x)\,\mathrm{d}x=\int_{|x|\leq\frac{2}{\beta}} \big(e^{2\pi ix\cdot y}-1\big)K_1(x)\,\mathrm{d}x
\end{equation}
If $|y|<\frac{\beta}{2}$,  it is direct to estimate
\begin{equation}\label{s-1}
\begin{split}
\big|\widehat{K_1}(y)\big|&\leq C|y|\int_{|x|\leq \frac 2\beta}|x|\frac{1}{|x|^{n-\beta}}\,\mathrm{d}x
 \leq \frac{2^\beta}{\beta+1}\beta^{-\beta}.
\end{split}
\end{equation}
Then there exists an absolute constant $C>0$ such that
\begin{equation}\label{K1-F1}
\big|\widehat{K_1}(y)\big|\leq C,\,\,  0<\beta<n,\ \ |y|< \frac{\beta}{2}.
\end{equation}
For $\frac{\beta}{2}\leq |y|\leq\beta,$ we rewrite $\widehat{K_1}(y)$ as
\begin{equation*}\begin{split}
\widehat{K_1}(y)&=\int_{|x|<\frac{1}{|y|}} e^{2\pi ix\cdot y}K_1(x)\,\mathrm{d}x+\int_{\frac{1}{|y|}\leq|x|\leq\frac{2}{\beta}} e^{2\pi ix\cdot y}K_1(x)\,\mathrm{d}x
\\&=\int_{|x|<\frac{1}{|y|}} \big(e^{2\pi ix\cdot y}-1\big)K_1(x)\,\mathrm{d}x+\int_{\frac{1}{|y|}\leq|x|\leq\frac{2}{\beta}} e^{2\pi ix\cdot y}K_1(x)\,\mathrm{d}x
\end{split}
\end{equation*}
Similar to \eqref{s-1}, it deduces
\begin{equation*}\begin{split}
\Big|\int_{|x|<\frac{1}{|y|}}\big (e^{2\pi ix\cdot y}-1\big)K_1(x)\,\mathrm{d}x\Big|&\leq C|y|\int_{|x|<\frac{1}{|y|}}|x|\frac{1}{|x|^{n-\beta}}\,\mathrm{d}x
\\&\leq \frac{1}{\beta+1}\frac{1}{|y|^\beta} \leq \frac{2^\beta}{\beta+1}\beta^{-\beta}
\end{split}
\end{equation*}
and
\begin{equation*}
\begin{split}
\Big|\int_{\frac{1}{|y|}\leq|x|\leq\frac{2}{\beta}} e^{2\pi ix\cdot y}K_1(x)\,\mathrm{d}x\Big|&\leq \int_{\frac{1}{\beta}\leq|x|\leq\frac{2}{\beta}}\frac{1}{|x|^{n-\beta}}\,\mathrm{d}x
 \leq C\frac{2^\beta-1}{\beta}\beta^{-\beta}.
\end{split}
\end{equation*}
Consequently, there exists an absolute constant $C>0$ such that
\begin{equation}\label{K1-F2}
\big|\widehat{K_1}(y)\big|\leq C\left(\frac{2^\beta}{\beta+1}\beta^{-\beta}+\frac{2^\beta-1}{\beta}\beta^{-\beta}\right)\le C, \,\, 0<\beta<n, \,\,\frac{\beta}{2}\le |y|\le\beta.
\end{equation}
As for $ |y|>\beta,$ $\widehat{K_1}(y)$ can be divided into
\begin{equation}\label{K1-F31}
\begin{split}
\widehat{K_1}(y)&=\int_{|x|<\frac{2}{|y|}} e^{2\pi ix\cdot y}K_1(x)\,\mathrm{d}x+\int_{\frac{2}{|y|}\leq|x|\leq\frac{2}{\beta}} e^{2\pi ix\cdot y}K_1(x)\,\mathrm{d}x
\\&=\int_{|x|<\frac{2}{|y|}} \big(e^{2\pi ix\cdot y}-1\big)K_1(x)\,\mathrm{d}x+\int_{\frac{2}{|y|}\leq|x|\leq\frac{2}{\beta}} e^{2\pi ix\cdot y}K_1(x)\,\mathrm{d}x.
\end{split}
\end{equation}
For the first term on the right hand of the above equality, we easily find that
\begin{equation}\label{K1-F311}
\begin{split}
\Big|\int_{|x|<\frac{2}{|y|}} \big(e^{2\pi ix\cdot y}-1\big)K_1(x)\,\mathrm{d}x\Big|&\leq C|y|\int_{|x|<\frac{2}{|y|}}|x|\frac{1}{|x|^{n-\beta}}\,\mathrm{d}x
\\&\leq \frac{C}{(\beta+1)|y|^\beta}\leq \frac{C}{\beta+1}\beta^{-\beta}.
\end{split}
\end{equation}
For the second term,
we  choose $z=\frac{y}{2|y|^2}$ with $|z|=\frac{1}{2|y|}<\frac{1}{2\beta}$ such that $e^{2\pi iy\cdot z}=-1$
and
\begin{equation*}
\int_{\R^n} e^{2\pi ix\cdot y}K_1(x)\,\mathrm{d}x=\frac12\int_{\R^n} e^{2\pi ix\cdot y}\big(K_1(x)-K_1(x-z)\big)\,\mathrm{d}x,
\end{equation*}
moreover, we have
\begin{equation}\label{K1-F321}
\begin{split}
\int_{\frac{2}{|y|}\leq|x|\leq\frac{2}{\beta}} e^{2\pi ix\cdot y}K_1(x)\,\mathrm{d}x=&\frac12\int_{\frac{2}{|y|}\leq|x|\leq\frac{2}{\beta}} e^{2\pi ix\cdot y}\big(K_1(x)-K_1(x-z)\big)\,\mathrm{d}x
\\&-\frac12\int_{\frac{2}{|y|}\leq|x+z|,~ |x|\leq\frac{2}{|y|}} e^{2\pi ix\cdot y}K_1(x)\,\mathrm{d}x
\\&+\frac12\int_{|x+z|\leq\frac{1}{|y|},~ |x|\geq\frac{2}{|y|}} e^{2\pi ix\cdot y}K_1(x)\,\mathrm{d}x\\
&+\frac12\int_{|x+z|\ge\frac{2}{\beta}} e^{2\pi ix\cdot y}K_1(x) \,\mathrm{d}x
\\:= &I+J+K+L.
\end{split}
\end{equation}
To estimate the term $I$, we see that
\begin{equation}\label{F321-I}
\begin{split}
2I&=\int_{\frac{2}{|y|}\leq|x|<\frac{1}{\beta},~|x-z|\leq\frac{1}{\beta}}\Big(\frac{x}{|x|^{n+1-\beta}}-\frac{x-z}{|x-z|^{n+1-\beta}}\Big)e^{2\pi ix\cdot y}\,\mathrm{d}x
\\&+\int_{\frac{1}{\beta}\leq|x|\leq\frac{2}{\beta},~|x-z|\leq\frac{1}{\beta}}\Big(\frac{x}{|x|^{n+1-\beta}}\chi_\beta(x)-\frac{x-z}{|x-z|^{n+1-\beta}}\Big)e^{2\pi ix\cdot y}\,\mathrm{d}x
\\&+\int_{\frac{2}{|y|}\leq|x|<\frac{1}{\beta},~|x-z|\geq\frac{1}{\beta}}\Big(\frac{x}{|x|^{n+1-\beta}}-\frac{x-z}{|x-z|^{n+1-\beta}}\chi_\beta(x-z)\Big)e^{2\pi ix\cdot y}\,\mathrm{d}x
\\&+\int_{\frac{1}{\beta}\leq|x|\leq\frac{2}{\beta},~|x-z|\geq\frac{1}{\beta}}\Big(\frac{x}{|x|^{n+1-\beta}}\chi_\beta(x)-\frac{x-z}{|x-z|^{n+1-\beta}}\chi_\beta(x-z)\Big)e^{2\pi ix\cdot y}\,\mathrm{d}x
\\&:=I_1+I_2+I_3+I_4.
\end{split}
\end{equation}
We first estimate $I_2.$ Thanks to $|x-z|\geq|x|-|z|\geq\frac{1}{\beta}-\frac{1}{2|y|}\geq\frac{1}{2\beta},$ one has
\begin{equation}\label{I-2}
\begin{split}
|I_2|&\leq\int_{\frac{1}{\beta}\leq|x|\leq\frac{2}{\beta}}\frac{1}{|x|^{n-\beta}}\,\mathrm{d}x
+\int_{\frac{1}{2\beta}\leq|x-z|\leq\frac{1}{\beta}}\frac{1}{|x-z|^{n-\beta}}\,\mathrm{d}x
\\&\leq C\frac{2^\beta-1}{\beta}\beta^{-\beta}+C\frac{1-2^{-\beta}}{\beta}\beta^{-\beta}.
\end{split}
\end{equation}
Then, thanks to  $|x|=|x-z+z|\geq|x-z|-|z|\geq\frac{1}{\beta}-\frac{1}{2|y|}\geq\frac{1}{2\beta},$ $I_3$ can be estimated as follows:
\begin{equation}\label{I-3}
\begin{split}
|I_3|&\leq\int_{\frac{1}{2\beta}\leq|x|\leq\frac{1}{\beta}}\frac{1}{|x|^{n-\beta}}\,\mathrm{d}x
+\int_{\frac{1}{\beta}\leq|x-z|\leq\frac{2}{\beta}}\frac{1}{|x-z|^{n-\beta}}\,\mathrm{d}x
\\&\leq C\frac{1 -2^{-\beta}}{\beta}\beta^{-\beta}+C\frac{2^\beta-1}{\beta}\beta^{-\beta}.
\end{split}
\end{equation}
The term $I_4$ is directly estimated as
\begin{equation}\label{I-4}
\begin{split}
|I_4|&\leq\int_{\frac{1}{\beta}\leq|x|\leq\frac{2}{\beta}}\frac{1}{|x|^{n-\beta}}\,\mathrm{d}x
+\int_{\frac{1}{\beta}\leq|x-z|\leq\frac{2}{\beta}}\frac{1}{|x-z|^{n-\beta}}\,\mathrm{d}x
\\&\leq C\frac{2^\beta-1}{\beta}\beta^{-\beta}.
\end{split}
\end{equation}
Now we deal with $I_1.$  First of all, let
$$
f(x)=\frac{x}{|x|^{n+1-\beta}},\qquad f_i(x)=\frac{x_i}{|x|^{n+1-\beta}},\quad i=1,2,\ldots,n.
$$
Then
\begin{equation*}
\partial_jf_i(x)=\frac{\mathbf{\delta}_{ij}}{|x|^{n+1-\beta}}
+(-n-1+\beta)\frac{x_jx_i}{|x|^{n+3-\beta}},\quad i,j=1,2,\ldots,n,
\end{equation*}
where $\delta_{ij}=1$ if $i=j$ and  $\delta_{ij}=0$ if $i\neq j$.
In this case, since $|x-z|\geq|x|-|z|\geq4|z|-|z|\geq3|z|,$ it concludes that for any $0\le t\le 1$, we have $|x-tz|\ge |x-z|-(1-t)|z|\ge \frac12|x-z|$. By the mean value theorem, one has
\begin{equation*}\begin{split}
 |f_i(x-z)-f_i(x)|&=\bigg|\int_0^1 \nabla f_i(x-tz)\cdot z\, \mathrm{d}t\bigg|\\
 &\le C\bigg|\int_0^1 \frac{|z|}{|x-tz|^{n+1-\beta}}\, \mathrm{d}t\bigg|\\
 &\le C\frac{|z|}{|x-z|^{n+1-\beta}}
\end{split}
\end{equation*}
for $i=1,2,\ldots,n,$ and
\begin{equation*}\begin{split}
 |f(x-z)-f(x)|&\le \sum_{j=1}^n |f_j(x-z)-f_j(x)|\le  C\frac{|z|}{|x-z|^{n+1-\beta}}
\end{split}
\end{equation*}
for some absolute constant $C$ depending on $n$.
Consequently,
\begin{equation}\label{I-1}
\begin{split}
|I_1|\leq &C|z|\int_{3|z|\leq|x-z|<\frac{1}{\beta}<\infty}\frac{1}{|x-z|^{n+1-\beta}}\,\mathrm{d}x\\
\\ \leq & C\frac{|z|^\beta}{1-\beta}\leq C\frac{\beta^{-\beta}}{1-\beta}.
\end{split}
\end{equation}
Substituting \eqref{I-2}-\eqref{I-1} into \eqref{F321-I} yields
\begin{equation}\label{I-E}
\begin{split}
|I|\le  C\Big(\frac{2^\beta-1}{\beta}\beta^{-\beta}+\frac{1-2^{-\beta}}{\beta}\beta^{-\beta}+\frac{\beta^{-\beta}}{1-\beta}\Big)
\end{split}
\end{equation}
for some absolute constant $C>0$.

Concerning the term $J$, thanks to $|x|\geq|x+z|-|z|\geq 4|z|-|z|\geq 3|z|$, one has
\begin{equation}\label{J-E}
|J|\leq\int_{3|z|\leq|x|\leq4|z|}\frac{1}{|x|^{n-\beta}}\,\mathrm{d}x
\leq C\frac{1-(\frac34)^{-\beta}}{\beta}\beta^{-\beta}.
\end{equation}
Utilizing $|x|\leq|x+z|+|z|\leq 4|z|+|z|\leq 5|z|$, the term $K$ can be bounded by
\begin{equation}\label{K-E}
|K|\leq\int_{4|z|\leq|x|\leq5|z|}\frac{1}{|x|^{n-\beta}}\,\mathrm{d}x
\leq C\frac{1-(\frac45)^{-\beta}}{\beta}\beta^{-\beta}.
\end{equation}
As for the term $L$, the fact that $\frac{2}{\beta}\ge |x|\ge |x+z|-|z|\ge \frac{2}{\beta}-\frac{1}{2\beta}=\frac{3}{2\beta}$ enables us to conclude
\begin{equation}\label{L-E}
\begin{split}
|L| \leq\int_{\frac{3}{2\beta}\leq|x|\leq\frac{2}{\beta}}\frac{1}{|x|^{n-\beta}}\,\mathrm{d}x
 \leq\frac{1}{\beta}\left(\Big(\frac{3}{2\beta}\Big)^\beta-\Big(\frac{2}{\beta}\Big)^\beta\right).
\end{split}
\end{equation}
Substituting \eqref{I-E}-\eqref{L-E} into \eqref{K1-F321}, we readily obtain that there exists an absolute constant $C>0$ such that
\begin{equation}\label{K12}
\Big|\int_{\frac{2}{|y|}\leq|x|\leq\frac{2}{\beta}} e^{2\pi ix\cdot y}K_1(x)\,\mathrm{d}x\Big|\le C.
\end{equation}
In view of \eqref{K1-F311}, \eqref{K12} and \eqref{K1-F31}, there exists an absolute constant $C>0$ such that
\begin{equation}\label{K1-F3}
\big|\widehat{K_1}(y)\big|\le C, \,\ \quad 0<\beta<n,\,\,\quad |y|>\beta.
\end{equation}
Combining  \eqref{K1-F1}, \eqref{K1-F2} with \eqref{K1-F3}, we finish the proof of \eqref{K1-F}. Applying \eqref{K1-F}, one has
\begin{equation*}\label{T1-1+}\begin{split}
&\|T_1f\|_{L^2(\R^n)}=\big\|\widehat{K_1}\widehat{f}\big\|_{L^2(\R^n)}\le C\big\|\widehat{f}\big\|_{L^2(\R^n)}=C\|f\|_{L^2(\R^n)},\\
&\|\Lambda^sT_1f\|_{L^2(\R^n)}=\big\|\widehat{K_1}\widehat{\Lambda^sf}\big\|_{L^2(\R^n)}\le C\big\|\widehat{\Lambda^sf}\big\|_{L^2(\R^n)}=C\|\Lambda^sf\|_{L^2(\R^n)}
\end{split}
\end{equation*}
for any $0<s<n$. Hence \eqref{T1-1} is proved and the proof of the lemma is complete.
\end{proof}
\section{Estimates via Besov Spaces}\label{BesovE}
\setcounter{section}{4}\setcounter{equation}{0}
In  this section, we will establish two key estimates concerning $$\overline{u}_I=\nabla^{\perp}(-\Delta)^{-1+\alpha}\overline{\omega},\qquad 0<\alpha<\frac12$$ in nonhomogeneous Besov spaces (see the Appendix in the end of the paper) which will be needed in the proof of Theorem \ref{th3}.
The first proposition is to deal with the product of two functions. The second  proposition is about a commutator estimate.
\begin{proposition}\label{add-0}
For any  $s>0,$  there exists a constant $C$ depending only on $s$ and $\alpha$ such that
	\begin{equation*}
	\big\|\overline{u}_I\cdot \nabla \omega^{\alpha_0}\big\|_{H^s(\R^2)}\leq C\left(\|\overline{\omega}\|_{L^2(\R^2)}\|\omega^{\alpha_0}\|_{H^{s+2\alpha+1}(\R^2)}
+\|\overline{\omega}\|_{H^s(\R^2)}\|\omega^{\alpha_0} \|_{B^{1+2\alpha}_{2,1}(\R^2)}\right).
	\end{equation*}
\end{proposition}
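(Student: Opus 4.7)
My plan is to split $\bar u_I$ into a low-frequency block $\bar u_I^L := \Delta_{-1}\bar u_I$ and its high-frequency complement $\bar u_I^H := (I - \Delta_{-1})\bar u_I$, writing
\begin{equation*}
\bar u_I \cdot \nabla \omega^{\alpha_0} = \bar u_I^L \cdot \nabla \omega^{\alpha_0} + \bar u_I^H \cdot \nabla \omega^{\alpha_0},
\end{equation*}
and to handle the two pieces by very different means. The reason for the split is the main obstacle of the proof: $\bar u_I = \nabla^\perp(-\Delta)^{-1+\alpha}\bar\omega$ has Fourier symbol of size $|\xi|^{-1+2\alpha}$, which is bounded on $|\xi|\gtrsim 1$ but is only \emph{locally} $L^2$ (not $L^\infty$) near the origin for $\alpha<\tfrac{1}{2}$, so $\|\bar u_I\|_{L^2}$ cannot be controlled by $\|\bar\omega\|_{L^2}$ alone (and no $L^1$-bound on $\bar\omega$ is available in Theorem~\ref{th3}).

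For the low-frequency piece I will use Fourier-side Cauchy--Schwarz. Since $\widehat{\bar u_I^L}(\xi)$ is supported in $|\xi|\le 2$ and equals an angular factor times $|\xi|^{-1+2\alpha}\widehat{\bar\omega}(\xi)$, I bound
\begin{equation*}
\|J^s \bar u_I^L\|_{L^\infty} \le \big\|\widehat{J^s\bar u_I^L}\big\|_{L^1} \le \Big\||\xi|^{-1+2\alpha}(1+|\xi|^2)^{s/2}\chi_{\{|\xi|\le 2\}}\Big\|_{L^2(\R^2)}\|\bar\omega\|_{L^2} \le C(s,\alpha)\|\bar\omega\|_{L^2},
\end{equation*}
the $L^2$-norm on $\R^2$ being finite precisely because $\alpha>0$ (this is where the $\alpha$-dependence of $C$ enters). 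Substituting the resulting $L^\infty$ and $W^{s,\infty}$ bounds on $\bar u_I^L$ into the Kato--Ponce inequality (Lemma~\ref{commutator}) then gives
\begin{equation*}
\|\bar u_I^L\cdot\nabla\omega^{\alpha_0}\|_{H^s}\lesssim \|\bar u_I^L\|_{W^{s,\infty}}\|\nabla\omega^{\alpha_0}\|_{L^2}+\|\bar u_I^L\|_{L^\infty}\|\nabla\omega^{\alpha_0}\|_{H^s}\le C(s,\alpha)\|\bar\omega\|_{L^2}\|\omega^{\alpha_0}\|_{H^{s+2\alpha+1}},
\end{equation*}
matching the first term on the right-hand side.

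For the high-frequency piece I will apply Bony's paraproduct decomposition
\begin{equation*}
\bar u_I^H\cdot\nabla\omega^{\alpha_0} = T_{\bar u_I^H}(\nabla\omega^{\alpha_0}) + T_{\nabla\omega^{\alpha_0}}(\bar u_I^H) + R(\bar u_I^H,\nabla\omega^{\alpha_0}),
\end{equation*}
together with two key uniform estimates on $\bar u_I^H$: first, $\|\bar u_I^H\|_{H^{s+1-2\alpha}}\lesssim\|\bar\omega\|_{H^s}$ (because $|\xi|^{-1+2\alpha}\le 1$ on the high-frequency support); second, using Bernstein in $\R^2$, $\|\Delta_j \bar u_I^H\|_{L^\infty}\lesssim 2^j\|\Delta_j \bar u_I^H\|_{L^2}\lesssim 2^{2\alpha j}\|\Delta_j \bar\omega\|_{L^2}$, which gives $\|\bar u_I^H\|_{B^{-2\alpha}_{\infty,\infty}}\lesssim\|\bar\omega\|_{L^2}$. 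The standard paraproduct bound $\|T_f g\|_{B^{s+\sigma}_{2,2}}\lesssim\|f\|_{B^\sigma_{\infty,\infty}}\|g\|_{B^s_{2,2}}$ for $\sigma<0$, applied with $\sigma=-2\alpha$, controls $T_{\bar u_I^H}(\nabla\omega^{\alpha_0})$ by $\|\bar\omega\|_{L^2}\|\omega^{\alpha_0}\|_{H^{s+2\alpha+1}}$; applied with $\sigma=-1+2\alpha$ together with the Besov embedding $B^{1+2\alpha}_{2,1}\hookrightarrow B^{2\alpha}_{\infty,\infty}$ (a consequence of Bernstein in $\R^2$), it controls $T_{\nabla\omega^{\alpha_0}}(\bar u_I^H)$ by $\|\nabla\omega^{\alpha_0}\|_{B^{-1+2\alpha}_{\infty,\infty}}\|\bar u_I^H\|_{H^{s+1-2\alpha}}\lesssim\|\omega^{\alpha_0}\|_{B^{1+2\alpha}_{2,1}}\|\bar\omega\|_{H^s}$, which is the second right-hand side term. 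The remainder is bounded via the standard Besov remainder estimate with $(s_1,s_2,p_1,p_2,q_1,q_2)=(-2\alpha,\,s+2\alpha,\,\infty,\,2,\,\infty,\,2)$ (admissible since $s_1+s_2=s>0$) by $\|\bar\omega\|_{L^2}\|\omega^{\alpha_0}\|_{H^{s+2\alpha+1}}$, absorbed once more into the first right-hand side term. Summing the low- and high-frequency contributions completes the proof.
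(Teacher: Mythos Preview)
Your proof is correct for the proposition as stated, and it is organized differently from the paper's. The paper applies Bony's decomposition directly to the full product $\bar u_I\cdot\nabla\omega^{\alpha_0}$ and deals with the low-frequency singularity of $\bar u_I$ inside the paraproduct terms via Hardy--Littlewood--Sobolev (bounding $\|\Delta_k\bar u_I\|_{L^\infty}$ through $\|\Lambda^{-(1-2\alpha+\epsilon)}\bar\omega\|_{L^{2/(2\alpha-\epsilon)}}\lesssim\|\bar\omega\|_{L^2}$). Your low/high split with Fourier-side Cauchy--Schwarz on $\bar u_I^L$ is a more elementary substitute for this HLS step and works equally well; the high-frequency analysis is essentially the same paraproduct computation in both proofs. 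Two small points are worth flagging. First, Lemma~\ref{commutator} as stated requires $p_1,p_3\in(1,\infty)$, so your use of Kato--Ponce with a $W^{s,\infty}$ factor is not literally covered; since $\bar u_I^L$ has compact Fourier support this is easily circumvented (e.g.\ take $p_1$ large finite and use Bernstein). Second, and more substantively, the paper's Remark after the proposition asserts that $C$ stays bounded as $\alpha\to\tfrac12$, which is the whole point for Theorem~\ref{th3}: your invocation of the standard bound $\|T_fg\|_{H^s}\lesssim\|f\|_{B^{-1+2\alpha}_{\infty,\infty}}\|g\|_{H^{s+1-2\alpha}}$ hides a constant of order $1/(1-2\alpha)$ and thus loses this uniformity. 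The paper avoids this by computing the paraproduct $T_{\nabla\omega^{\alpha_0}}\bar u_I$ directly and pairing the $\ell^1$-summability built into $\|\omega^{\alpha_0}\|_{B^{1+2\alpha}_{2,1}}$ against the merely bounded kernel $\{2^{j(2\alpha-1)}\}_{j\ge2}$; in your framework the same fix is to use the sharper embedding $B^{1+2\alpha}_{2,1}\hookrightarrow B^{2\alpha}_{\infty,1}$ together with the paraproduct estimate $\|T_fg\|_{H^s}\lesssim\|f\|_{B^{-1+2\alpha}_{\infty,1}}\|g\|_{H^{s+1-2\alpha}}$, whose constant is uniformly bounded for $\alpha\le\tfrac12$.
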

\begin{remark}
Let us point out that the positive constant $C$ is uniformly bounded as parameter $\alpha$ goes to $\frac12.$
\end{remark}
\begin{proof}[Proof of Proposition \ref{add-0}]
In view of the Bony decomposition, one write
\begin{equation*}
	\overline{u}_I\cdot \nabla \omega^{\alpha_0}=\sum_{i=1}^2\left(T_{\partial_{i} \omega^{\alpha_0}}\overline{u}^i_I +T_{\overline{u}^i_I}\partial_i \omega^{\alpha_0}+R\big(\overline{u}^i_I, \partial_i \omega^{\alpha_0}\big)\right),
\end{equation*} where
$$T_{\partial_i \omega^{\alpha_0}}\overline{u}^i_I=\displaystyle{\sum_{q>0}} \Delta_q\overline{u}^i_IS_{q-1}\partial_i \omega^{\alpha_0}, \quad
T_{\overline{u}^i_I}\partial_i \omega^{\alpha_0}=\displaystyle{\sum_{q>0}}S_{q-1} \overline{u}^i_I\Delta_q\partial_i \omega^{\alpha_0},$$
$$R\big(\overline{u}^i_I, \partial_i \omega^{\alpha_0}\big)=\displaystyle{\sum_{q\geq-1}}\Delta_{q}\overline{u}^i_I \widetilde{\Delta}_{q}\partial_i \omega^{\alpha_0}.$$
According to the H\"{o}lder inequality and  Lemma \ref{B}, we obtain that for $q>0,$
\begin{equation*}
\begin{split}
2^{qs}\big\|\Delta_q\overline{u}^i_I S_{q-1}\partial_i \omega^{\alpha_0}\big\|_{L^2}&\leq 2^{qs}\big\|S_{q-1}\nabla \omega^{\alpha_0} \big\|_{L^\infty}\big\|\Delta_q\overline{u}_I\big\|_{L^2}
\\&\leq 2^{qs}\sum_{-1\leq k\leq q-2}\|\Delta_{k}\nabla \omega^{\alpha_0} \|_{L^\infty}2^{q(-1+2\alpha)}\|\Delta_q\overline{\omega}\|_{L^2}
\\&\leq2^{qs}\|\Delta_q\overline{\omega}\|_{L^2}\sum_{-1\leq k\leq q-2}2^{(q-k)(2\alpha-1)}2^{k(1+2\alpha)}\|\Delta_{k} \omega^{\alpha_0} \|_{L^2}.
\end{split}
\end{equation*}
Therefore, Lemma \ref{annulus} and the Young inequality for series yields
\begin{equation}\label{p-1}
\begin{split}
\norm{T_{\nabla \omega^{\alpha_0}}\overline{u}_I}_{H^s}&\leq C_s\norm{\big\{2^{qs}\big\|S_{q-1}\partial_i \omega^{\alpha_0} \Delta_q\overline{u}^i_I\big\|_{L^2}\big\}_{q>0}}_{\ell^2}
\\&\leq C_s2^{2(2\alpha-1)}\|\overline{\omega}\|_{H^s}\|\omega^{\alpha_0} \|_{B^{1+2\alpha}_{2,1}}.
\end{split}
\end{equation}
Similarly, for $0<\epsilon<2\alpha,$
\begin{align*}
2^{qs}\big\|S_{q-1} \overline{u}^i_I\Delta_q\partial_i \omega^{\alpha_0}\big\|_{L^2}&\leq2^{qs}\|S_{q-1}\overline{u}_I \|_{L^\infty}\|\Delta_q\nabla \omega^{\alpha_0}\|_{L^2}
\\&\leq\sum_{-1\leq k\leq q-2}\|\Delta_{k} \overline{u}_I \|_{L^\infty}\|\Delta_q \omega^{\alpha_0}\|_{L^2}2^{q(s+1)}
\\&\leq C\|\Lambda^{-(1-2\alpha+\epsilon)}\overline{\omega}\|_{L^{\frac{2}{2\alpha-\epsilon}}}\sum_{k\leq q-2}2^{2k\alpha}2^{q(s+1)}\|\Delta_{q} \omega^{\alpha_0} \|_{L^2}
\\&\leq C\|\overline{\omega}\|_{L^2}2^{q(s+1+2\alpha)}\|\Delta_{q} \omega^{\alpha_0} \|_{L^2}\sum_{k\leq q-2} 2^{2\alpha(k-q)},
\end{align*}
where Lemma \ref{Hardy} has been used in the last inequality. In addition, when $\alpha\rightarrow\frac12,$ the constant $C$ is uniformly bounded.
Hence, by Lemma \ref{annulus}, we get
\begin{equation}\label{p-2}
\begin{split}
\big\|T_{\overline{u}^i_I}\partial_i \omega^{\alpha_0}\big\|_{H^s}&\leq C_s\norm{\big\{2^{qs}\|S_{q-1}\overline{u}_I \Delta_q\nabla \omega^{\alpha_0}\|_{L^2}\big\}_{q>0}}_{\ell^2}
\\&\leq C\|\overline{\omega}\|_{L^2} \|\omega^{\alpha_0} \|_{H^{s+1+2\alpha}}.
\end{split}
\end{equation}

For the reminder term, we see that
\begin{equation*}
\begin{split}
2^{qs}\big\|\Delta_{q}R(\overline{u}^i_I, \partial_i \omega^{\alpha_0})\big\|_{L^2}&\leq
\sum_{q\leq q'+N_0}2^{qs}\big\|\Delta_{q}(\Delta_{q'}\overline{u}^i_I \widetilde{\Delta}_{q'}\partial_i \omega^{\alpha_0})\big\|_{L^2}
\\&\leq\sum_{q\leq q'+N_0}2^{qs}\big\|\Delta_{q'}\overline{u}_I\big\|_{L^\infty} \big\|\widetilde{\Delta}_{q'}\nabla \omega^{\alpha_0}\big\|_{L^2}
\\&\leq C\big\|\Lambda^{-(1-2\alpha+\epsilon)}\overline{\omega}\big\|_{L^{\frac{2}{2\alpha-\epsilon}}}\sum_{q\leq q'+N_0}2^{qs}
2^{(1+2\alpha)q'}\big\|\widetilde{\Delta}_{q'}\omega^{\alpha_0}\big\|_{L^2}
\\&\leq C\|\overline{\omega}\|_{L^{2}}\sum_{q\leq q'+N_0}2^{(q-q')s}
2^{(s+1+2\alpha)q'}\|\widetilde{\Delta}_{q'}\omega^{\alpha_0}\|_{L^2}.
\end{split}
\end{equation*}
This ensures that by Lemma \ref{B}, for $s>0,$
\begin{equation}\label{p-3}
\begin{split}
\big\|R(\overline{u}^i_I, \partial_i \omega^{\alpha_0})\big\|_{H^s}&\leq\big\|\big\{2^{qs}\|\Delta_{q}R(\overline{u}^i_I, \nabla \omega^{\alpha_0})\|_{L^2}\big\}_{q\geq-1}\big\|_{\ell^2}
\\&\leq C\|\overline{\omega}\|_{L^2} \big\|\omega^{\alpha_0}\big \|_{H^{s+1+2\alpha}}.
\end{split}
\end{equation}
Collecting   \eqref{p-1}, \eqref{p-2} and \eqref{p-3} above gives the proof of this proposition.
\end{proof}

\begin{proposition}\label{add1}
For any $s>0,$  there exists a  constant $C$ depending only on $s$ such that,
	\begin{equation}\begin{split}\label{add-2}
	&\|J^s(\overline{u}_I\cdot \nabla \overline{\omega})-\overline{u}_I\cdot J^s \nabla \overline{\omega}\|_{L^2(\R^2)}\\ \leq& C\big(\|\overline{\omega}\|_{H^{2\alpha}(\R^2)}^2
+\|\overline{\omega}\|_{H^s(\R^2)}\|\overline{\omega}\|_{H^{2\alpha+1}(\R^2)}+
\|\overline{\omega}\|_{H^s(\R^2)}\|\overline{\omega} \|_{B^{1+2\alpha}_{2,1}(\R^2)}\big).
	\end{split}\end{equation}
In particular, if $s>2,$ then we have
\begin{equation}\label{c-add}
\|J^s(\overline{u}_I\cdot \nabla \overline{\omega})-\overline{u}_I\cdot J^s \nabla \overline{\omega}\|_{L^2(\R^2)}\leq C\|\overline{\omega}\|_{H^{s}(\R^2)}^2.
\end{equation}
\end{proposition}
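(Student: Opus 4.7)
The plan is to follow the Bony paraproduct strategy used in the proof of Proposition \ref{add-0}, but now exploit commutator cancellations so as to avoid losses of the form $\|\overline{\omega}\|_{H^{s+1+2\alpha}}$ that would be fatal when only $\overline{\omega}\in H^s$ is available. Since $J^s$ commutes with $\partial_i$,
\begin{equation*}
J^s(\overline{u}_I\cdot\nabla\overline{\omega})-\overline{u}_I\cdot J^s\nabla\overline{\omega}=\sum_{i=1}^{2}\bigl[J^s,\overline{u}_I^{\,i}\bigr]\partial_i\overline{\omega},
\end{equation*}
and applying Bony's decomposition to \emph{both} $\overline{u}_I^{\,i}\partial_i\overline{\omega}$ and $\overline{u}_I^{\,i}\cdot J^s\partial_i\overline{\omega}$ and subtracting yields three pieces: the paraproduct commutator $\mathrm{I}=[J^s,T_{\overline{u}_I^i}]\partial_i\overline{\omega}$; the ``reverse'' paraproduct mismatch $\mathrm{II}=J^s(T_{\partial_i\overline{\omega}}\overline{u}_I^{\,i})-T_{J^s\partial_i\overline{\omega}}\overline{u}_I^{\,i}$; and the remainder commutator $\mathrm{III}=\sum_q[J^s,\Delta_q\overline{u}_I^{\,i}]\widetilde{\Delta}_q\partial_i\overline{\omega}$.

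The piece $\mathrm{I}$ is handled block-by-block: a first-order Taylor expansion of the symbol $(1+|\xi|^2)^{s/2}$ around the high-frequency variable in $[J^s,S_{q-1}\overline{u}_I^{\,i}]\Delta_q\partial_i\overline{\omega}$ gains one derivative, producing $\|\mathrm{I}\|_{L^2}\lesssim\|\nabla\overline{u}_I\|_{L^\infty}\|\overline{\omega}\|_{H^s}$ after almost-orthogonal summation. Since $\nabla\overline{u}_I$ is a Riesz-type operator applied to $\Lambda^{2\alpha}\overline{\omega}$, the 2D Besov embedding $B^1_{2,1}(\R^2)\hookrightarrow L^\infty$ yields $\|\nabla\overline{u}_I\|_{L^\infty}\lesssim\|\overline{\omega}\|_{B^{1+2\alpha}_{2,1}}$, giving the third term of \eqref{add-2}. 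The mismatch $\mathrm{II}$ is estimated by bounding $\|J^s T_{\partial_i\overline{\omega}}\overline{u}_I^{\,i}\|_{L^2}$ and $\|T_{J^s\partial_i\overline{\omega}}\overline{u}_I^{\,i}\|_{L^2}$ separately: each summand is frequency-localized at $\sim 2^q$ (by $\Delta_q\overline{u}_I^{\,i}$), and pairing the gain $\|\Delta_q\overline{u}_I\|_{L^2}\lesssim 2^{q(-1+2\alpha)}\|\Delta_q\overline{\omega}\|_{L^2}$ with the Bernstein-type bound $\|S_{q-1}\partial_i\overline{\omega}\|_{L^\infty}\lesssim 2^{q(1-2\alpha)}\|\overline{\omega}\|_{B^{1+2\alpha}_{2,1}}$ produces $2^{qs}\|\overline{\omega}\|_{B^{1+2\alpha}_{2,1}}\|\Delta_q\overline{\omega}\|_{L^2}$ per block; almost orthogonality then gives $\|\overline{\omega}\|_{H^s}\|\overline{\omega}\|_{B^{1+2\alpha}_{2,1}}$. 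For $\mathrm{III}$, apply Lemma \ref{commutator} to each dyadic block to get $\|[J^s,\Delta_q\overline{u}_I^{\,i}]\widetilde{\Delta}_q\partial_i\overline{\omega}\|_{L^2}\lesssim 2^{q(s+1+2\alpha)}\|\Delta_q\overline{\omega}\|_{L^2}\|\widetilde{\Delta}_q\overline{\omega}\|_{L^2}$ via $\|\nabla\Delta_q\overline{u}_I\|_{L^\infty}\lesssim 2^{q(1+2\alpha)}\|\Delta_q\overline{\omega}\|_{L^2}$; Cauchy--Schwarz in $q\ge 0$ then produces $\|\overline{\omega}\|_{H^s}\|\overline{\omega}\|_{H^{1+2\alpha}}$, while the low-frequency piece $q=-1$, where the dyadic gain is unavailable, is handled separately using that the Fourier multiplier $|\xi|^{-(1-2\alpha)}$ is $L^2$-bounded on $|\xi|\le 2$ uniformly in $\alpha$, contributing the $\|\overline{\omega}\|_{H^{2\alpha}}^2$ term.

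Finally, \eqref{c-add} follows from \eqref{add-2} via the 2D Sobolev embeddings $H^s\hookrightarrow B^{1+2\alpha}_{2,1}\cap H^{1+2\alpha}\cap H^{2\alpha}$, valid whenever $s>2\ge 1+2\alpha$, which collapse each factor on the right side of \eqref{add-2} into $\|\overline{\omega}\|_{H^s}$. The main technical obstacle is maintaining constants that are uniform in $\alpha$ as $\alpha\to 1/2$: a direct application of Hardy--Littlewood--Sobolev (Lemma \ref{Hardy}) to $\overline{u}_I=\nabla^\perp(-\Delta)^{-1+\alpha}\overline{\omega}$ produces constants that blow up at $\alpha=1/2$, but Bony's dyadic decomposition pairs the $1-2\alpha$ smoothing of $\Lambda^{2\alpha-1}$ against a loss of exactly $1-2\alpha$ generated by Bernstein on the complementary block, so the two $\alpha$-dependent powers cancel at each dyadic scale and the final constants are uniform in $\alpha\in(0,1/2)$.
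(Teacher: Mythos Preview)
Your proposal is essentially the paper's own argument: the same Bony decomposition of the commutator into the paraproduct commutator $[J^s,T_{\overline{u}_I^i}]\partial_i\overline{\omega}$, the reverse paraproduct mismatch, and the remainder commutator, with the first handled by a first-order Taylor expansion of the $J^s$-kernel (yielding $\|\nabla\overline{u}_I\|_{L^\infty}\|\overline{\omega}\|_{H^s}$ and hence the $B^{1+2\alpha}_{2,1}$ factor) and the $q=-1$ remainder block isolated for special treatment.

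One point to correct: your stated reason for the low-frequency block is off. The multiplier $|\xi|^{-(1-2\alpha)}$ is \emph{not} bounded on $\{|\xi|\le 2\}$ (it blows up at the origin whenever $\alpha<\tfrac12$), so $\|\Delta_{-1}\overline{u}_I\|_{L^2}$ is not controlled by $\|\overline{\omega}\|_{H^{2\alpha}}$. What rescues the $q=-1$ block is that the commutator (via the same Taylor trick used for $\mathrm{I}$) produces a factor $\nabla\Delta_{-1}\overline{u}_I$, whose symbol behaves like $|\xi|^{2\alpha}$ and \emph{is} bounded on $\{|\xi|\le 2\}$ uniformly in $\alpha$; this is exactly what the paper does, obtaining $\|\Delta_{-1}\nabla\overline{u}_I\|_{L^\infty}\lesssim\|\Lambda^{2\alpha}\overline{\omega}\|_{L^2}$ and hence the $\|\overline{\omega}\|_{H^{2\alpha}}^2$ contribution. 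Once this is adjusted, your sketch matches the paper.
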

\begin{proof}
With the help of Bony's decomposition, one writes
\begin{equation*}\begin{split}
&J^s(\overline{u}_I\cdot \nabla \overline{\omega})-\overline{u}_I\cdot J^s \nabla \overline{\omega}\\=&
\sum_{i=1}^2\Big([J^s, T_{\overline{u}^i_I}\partial_i]\overline{\omega}+J^s(T_{\partial_i \overline{\omega}}\overline{u}^i_I)-T_{J^s\partial_i \overline{\omega}}\overline{u}^i_I+J^s\big(R(\overline{u}^i_I,\partial_i \overline{\omega})\big)
-R(\overline{u}^i_I,J^s\partial_i \overline{\omega})\Big).
\end{split}
\end{equation*}
The last two terms can be further decomposed into three parts
\begin{equation*}\begin{split}
&J^s\big(R(\overline{u}^i_I,\partial_i \overline{\omega})\big)-R(\overline{u}^i_I,J^s\partial_i \overline{\omega})
\\=&\sum_{q'\geq0}J^s(\Delta_{q'}\overline{u}^i_I \widetilde{\Delta}_{q'}\partial_i \overline{\omega})-\sum_{q'\geq0}\Delta_{q'}\overline{u}^i_I \widetilde{\Delta}_{q'}J^s\partial_i \overline{\omega}+[J^s, {\Delta}_{-1}\overline{u}^i_I \partial_i]\widetilde{\Delta}_{-1}\overline{\omega}.
\end{split}
\end{equation*}
Next, we are going to establish the standard inner $L^2$-norm of the six terms above one by one.

\textbf{Bounds for the term $[J^s, T_{\overline{u}^i_I}\partial_i]\overline{\omega}$}.
By virtue of Proposition \ref{Dy}, we can rewrite  $[J^s, T_{\overline{u}^i_I}\cdot \partial_i]$ as a convolution operator. Indeed,
\begin{equation*}\begin{split}
 [J^s, T_{\overline{u}^i_I}\partial_i]\overline{\omega}
 =&\sum_{q>0}[J^s\widetilde{\Delta}_q, S_{q-1}{u}^i_I \partial_i]\Delta_q\overline{\omega}
\\=&\sum_{q>0}\int_{\R^2} 2^{2q}G_s(2^qy)\big(S_{q-1}\overline{u}^i_I(x-y)-S_{q-1}\overline{u}^i_I(x)\big)\Delta_q\partial_i\overline{\omega}(x-y)\,\mathrm{d}y
,\end{split}
\end{equation*}
where $G_s$ is the inverse Fourier transform of $\xi\mapsto \langle2^{q}\xi\rangle^{s}\varphi(\xi)$.

From the first order Taylor formula, we deduce that
\begin{equation*}\begin{split}
\big|[J^s, T_{\overline{u}^i_I} \partial_i]\overline{\omega}\big|&\leq
\sum_{q>0}\int_{\R^2}\int_{0}^{1} 2^{2q}|G_s(2^qy)y|
|\nabla S_{q-1}\overline{u}^i_I(x-\tau y)|\big|\Delta_q\partial_i\overline{\omega}(x-y)\big|\,\mathrm{d}\tau\mathrm{d}y.
\end{split}
\end{equation*}
Now, taking the $L^2$ norm of the above inequality, using the fact that $L^2\sim B^0_{2,2}$, and using Lemma \ref{annulus}, we get
\begin{equation*}\begin{split}
&\big\|[J^s, T_{\overline{u}^i_I}\partial_i]\overline{\omega}\big\|_{L^2}
\\ \leq&\Big(\sum_{q>0}\Big\|\int_{\R^2}\int_{0}^{1}
2^{2q}|G_s(2^qy)y|
|\nabla S_{q-1}\overline{u}_I(\cdot-\tau y)|
|\Delta_q\nabla\overline{\omega}(\cdot-y)|\,\mathrm{d}\tau\mathrm{d}y\Big\|_{L^2}^2\Big)^{\frac12}.
\end{split}
\end{equation*}
Adopting to  the fact that the
norm of an integral is less than the integral of the norm and using H\"{o}lder's
inequality yield
\begin{equation*}\begin{split}
&\Big\|\int_{\R^2}\int_{0}^{1}
2^{2q}|G_s(2^qy)y|
\big|\nabla S_{q-1}\overline{u}_I(x-\tau y)\big|
\big|\Delta_q\nabla\overline{\omega}(x-y)\big|\,\mathrm{d}\tau\mathrm{d}y\Big\|_{L^2}\\ \leq&
\int_{\R^2}\int_{0}^{1}
2^{2q}|G_s(2^qy)y|
\|\nabla S_{q-1}\overline{u}_I(\cdot-\tau y)\|_{L^\infty}
\|\Delta_q\nabla\overline{\omega}(\cdot-y)\|_{L^2}\,\mathrm{d}\tau\mathrm{d}y\\ \leq&
2^{qs}\|\nabla S_{q-1}\overline{u}_I\|_{L^\infty}\|\Delta_q\overline{\omega}\|_{L^2},
\end{split}
\end{equation*}where the translation invariance of the Lebesgue measure is used in the last inequality.

Hence, the H\"{o}lder inequality and Bernstein's inequality enable us to conclude that
\begin{equation*}\begin{split}
\big\|[J^s, T_{\overline{u}^i_I}\partial_i]\overline{\omega}\big\|_{L^2}&\leq
\Big(\sum_{q>0}2^{2qs}\|\nabla S_{q-1}\overline{u}_I\|_{L^\infty}^2\|\Delta_q\overline{\omega}\|_{L^2}^2\Big)^{\frac12}
\\&\leq \sup_{q>0}\|\nabla S_{q-1}\overline{u}_I\|_{L^\infty}\|\overline{\omega}\|_{H^s}
\\&\leq C\|\overline{\omega}\|_{B^{1+2\alpha}_{2,1}}\|\overline{\omega}\|_{H^s}.
\end{split}
\end{equation*}

\textbf{Bounds for $J^s(T_{\partial_i \overline{\omega}}\overline{u}^i_I).$} By virtue of the H\"{o}lder inequality and  Bernstein's inequality, we get
\begin{equation*}
\begin{split}
2^{qs}\big\|\Delta_q\overline{u}_I\cdot S_{q-1}\partial_i \overline{\omega}\big\|_{L^2}&\leq 2^{qs}\|S_{q-1}\partial_i \overline{\omega} \|_{L^\infty}\|\Delta_q\overline{u}^i_I\|_{L^2}
\\&\leq 2^{qs}\sum_{k\leq q-2}\|\Delta_{k}\nabla \overline{\omega} \|_{L^\infty}2^{q(-1+2\alpha)}\|\Delta_q\overline{\omega}\|_{L^2}
\\&\leq2^{qs}\|\Delta_q\overline{\omega}\|_{L^2}\sum_{k\leq q-2}2^{(q-k)(2\alpha-1)}2^{k(1+2\alpha)}\|\Delta_{k} \overline{\omega}\|_{L^2}.
\end{split}
\end{equation*}
Hence, we have by Lemma \ref{annulus} that
\begin{equation*}\begin{split}
\big\|J^s(T_{\partial_i \overline{\omega}}\overline{u}^i_I)\big\|_{L^2}&\leq
C_s\norm{\big\{2^{qs}\|\Delta_q\overline{u}^i_I S_{q-1}\partial_i \overline{\omega}\|_{L^2}\big\}_{q>0}}_{\ell^2}
\\&\leq C_s2^{2(2\alpha-1)}\|\overline{\omega}\|_{H^s}\|\overline{\omega}\|_{B^{1+2\alpha}_{2,1}}.
\end{split}
\end{equation*}

\textbf{A similar bound holds for both terms $T_{J^s\partial_i \overline{\omega}}\overline{u}^i_I$}, $\displaystyle\sum_{q'\geq0}\Delta_{q'}\overline{u}^i_I \cdot\widetilde{\Delta}_{q'}J^s\partial_i \overline{\omega}$. By the H\"older inequality, one has
\begin{equation*}
\begin{split}
\big\|\Delta_q\overline{u}^i_I  S_{q-1}\partial_i J^s\overline{\omega}\big\|_{L^2}&\leq \|S_{q-1}\nabla J^s\overline{\omega} \|_{L^\infty}\|\Delta_q\overline{u}_I\|_{L^2}
\\&\leq \sum_{k\leq q-2}\|\Delta_{k}\nabla J^s\overline{\omega} \|_{L^\infty}2^{q(-1+2\alpha)}\|\Delta_q\overline{\omega}\|_{L^2}
\\&\leq 2^{q(1+2\alpha)}\|\Delta_q\overline{\omega}\|_{L^2}\|J^s\overline{\omega}\|_{L^2}\sum_{k\leq q-2}2^{2(k-q)},
\end{split}
\end{equation*}
from which it follows that
\begin{equation*}
\big\|T_{J^s\partial_i \overline{\omega}}\overline{u}^i_I\big\|_{L^2} \leq C\|\overline{\omega}\|_{H^s} \|\overline{\omega}\|_{B^{1+2\alpha}_{2,1}}.
\end{equation*}
For the term $\displaystyle\sum_{q'\geq0}\Delta_{q'}\overline{u}^i_I \widetilde{\Delta}_{q'}J^s\partial_i \overline{\omega}$, by the H\"older inequality, we immediately obtain
\begin{equation*}
\begin{split}
\Big\|\sum_{q'\geq0}\Delta_{q'}\overline{u}^i_I \widetilde{\Delta}_{q'}J^s\partial_i \overline{\omega}\Big\|_{L^2}&\leq
\sum_{q'\geq 0}\big\|\Delta_{q'}\overline{u}^i_I \widetilde{\Delta}_{q'}\partial_i  J^s\overline{\omega}\big\|_{L^2}
\\&\leq\sum_{q'\geq 0}\|\Delta_{q'}\overline{u}_I\|_{L^2}\big\|\widetilde{\Delta}_{q'}\nabla  J^s\overline{\omega}\big\|_{L^\infty}
\\&\leq\|J^s\overline{\omega}\|_{L^2}\sum_{q'\geq 0}\|\Delta_{q'}\overline{\omega}\|_{L^2}
2^{q'(1+2\alpha)}\\&\leq\|\overline{\omega}\|_{H^s}\|\overline{\omega}\|_{B^{1+2\alpha}_{2,1}}.
\end{split}
\end{equation*}

\textbf{Bounds for the term $\displaystyle\sum_{q'\geq0}J^s\big(\Delta_{q'}\overline{u}^i_I \widetilde{\Delta}_{q'}\partial_i \overline{\omega}\big)$}. Utilizing again the H\"{o}lder inequality and  Bernstein's inequality gives
\begin{align*}
\Big\|\sum_{q'\geq0}J^s\big(\Delta_{q'}\overline{u}^i_I \widetilde{\Delta}_{q'}\partial_i \overline{\omega}\big)\Big\|_{L^2}&\leq
\sum_{q'\geq0}\Big(\sum_{q\leq q'+N_0}2^{2qs}\big\|\Delta_{q}(\Delta_{q'}\overline{u}^i_I \widetilde{\Delta}_{q'}\partial_i \overline{\omega})\big\|_{L^2}^2\Big)^\frac12
\\&\leq\sum_{q'\geq0}\Big(\sum_{q\leq q'+N_0}2^{2qs}\Big)^\frac12\|\Delta_{q'}\overline{u}_I\|_{L^2} \big\|\widetilde{\Delta}_{q'}\nabla \overline{\omega}\big\|_{L^\infty}
\\&\leq\sum_{q'\geq0}2^{q's}\|\Delta_{q'}\overline{\omega}\|_{L^2}
2^{q'(1+2\alpha)}\|\widetilde{\Delta}_{q'}\overline{\omega}\|_{L^2}
\\&\leq\|\overline{\omega}\|_{H^s}\|\overline{\omega}\|_{H^{1+2\alpha}}.
\end{align*}

\textbf{Bounds  for the last term $[J^s, {\Delta}_{-1}\overline{u}_I^i\partial_i]\widetilde{\Delta}_{-1}\overline{\omega}$}.  Adopting to the  similar method to estimate $[J^s, T_{\overline{u}^i_I} \partial_i]\overline{\omega}$, we get
\begin{align*}
&[J^s, {\Delta}_{-1}\overline{u}_I^i\partial_i]\widetilde{\Delta}_{-1}\overline{\omega}
\\=&\sum_{|q+1|\leq2}[J^s\Delta_q, {\Delta}_{-1}\overline{u}_I^i\partial_i]\widetilde{\Delta}_{-1}\overline{\omega}
\\=&\sum_{|q+1|\leq2}\int_{\R^2} 2^{2q}G_s(2^qy)\left({\Delta}_{-1}\overline{u}^i_I(x-y)-{\Delta}_{-1}\overline{u}^i_I(x)\right)\widetilde{\Delta}_{-1}\partial_i\overline{\omega}(x-y)\,\mathrm{d}y
\\=&\sum_{|q+1|\leq2}\int_{\R^2}\int_{0}^{1} 2^{2q}G_s(2^qy)\big(y\cdot\nabla  {\Delta}_{-1}\overline{u}^i_I(x-\tau y)\big)\widetilde{\Delta}_{-1}\partial_i\overline{\omega}(x-y)\,\mathrm{d}\tau\mathrm{d}y.
\end{align*}
Based on this, the Minkowski inequality and the H\"older inequality allow us to infer that
\begin{equation*}\begin{split}
&\big\|[J^s, {\Delta}_{-1}\overline{u}_I^i\partial_i]\widetilde{\Delta}_{-1}\overline{\omega}\big\|_{L^2}
\\ \leq&\Big(\sum_{|q+1|\leq2}\Big\|\int_{\R^2}\int_{0}^{1} 2^{2q}G_s(2^qy)\big(y\cdot\nabla  {\Delta}_{-1}\overline{u}^i_I(x-\tau y)\big)\widetilde{\Delta}_{-1}\partial_i\overline{\omega}(x-y)\,\mathrm{d}\tau\mathrm{d}y\Big\|_{L^2}^2\Big)^{\frac12}
\\ \leq&C_s\|{\Delta}_{-1}\nabla\overline{u}_I\|_{L^\infty}
\big\|\widetilde{\Delta}_{-1}\nabla\overline{\omega}\big\|_{L^2}
  \leq C_s\big\|\Lambda^{2\alpha}\overline{\omega}\big\|_{L^2}\|\overline{\omega}\|_{L^2}\leq
C_s\|\overline{\omega}\|_{H^{2\alpha}}^2.
\end{split}
\end{equation*}
Combining these estimates above yields \eqref{add-2}. This ends the proof.
\end{proof}

\section{Proof of main Theorems }\label{sec5}
\setcounter{section}{5}\setcounter{equation}{0}

 This section is devoted to showing the main theorems. Let us begin by  proving Theorem~\ref{th1}.
 \subsection{Proof of Theorem \ref{th1}}

First of all, let us denote
$$\overline{\omega}=\omega^{\alpha}-\omega^{\alpha_{0}}\quad\text{and}\quad\overline{u}=u^{\alpha}-u^{\alpha_{0}}.$$
Then,  the couple $(\overline{\omega},\,\overline{u})$ satisfies
\begin{equation}\label{diffrence}
\overline{\omega}_{t}+u^{\alpha_{0}}\cdot\nabla \overline{\omega}+\overline{u}\cdot\nabla \overline{\omega}+\overline{u}\cdot\nabla \omega^{\alpha_{0}} =0.
\end{equation}
Operating $J^s$ on \eqref{diffrence} and taking the scalar product of the resulting
equation with $J^s\overline{\omega}$ in $L^2,$ we get
\begin{equation}\label{11-1}\begin{split}
\frac12\frac{\mathrm{d}}{\mathrm{d}t}\|\overline{\omega}(t)\|_{H^s}^2=&-\int_{\R^2} J^s(u^{\alpha_{0}}\cdot\nabla \overline{\omega})J^s\overline{\omega}\,\mathrm{d}x-\int_{\R^2} J^s(\overline{u}\cdot\nabla \overline{\omega})J^s\overline{\omega}\,\mathrm{d}x\\
&-\int_{\R^2} J^s(\overline{u}\cdot\nabla \omega^{\alpha_{0}})J^s\overline{\omega}\,\mathrm{d}x\\:=&
I_1+I_2+I_3.
\end{split}\end{equation}

We are going to estimate the three terms on the right hand side of \eqref{11-1} one by one. By the divergence-free condition and \eqref{com-2}, we have
\begin{equation}\label{I1E}\begin{split}
I_1=&-\int_{\R^2} (J^s(u^{\alpha_{0}}\cdot\nabla \overline{\omega})-u^{\alpha_{0}}\cdot\nabla J^s\overline{\omega})
J^s\overline{\omega}\,\mathrm{d}x\\ \leq&
\big\|J^s(u^{\alpha_{0}}\cdot\nabla \overline{\omega})-u^{\alpha_{0}}\cdot\nabla J^s\overline{\omega}\big\|_{L^2}
\|J^s\overline{\omega}\|_{L^2}\\ \leq&\Big(\|J^su^{\alpha_{0}}\|_{L^\frac{1}{\alpha_0}}\|\nabla\overline{\omega}\|_{L^\frac{2}{1-2\alpha_0}}+\|\nabla u^{\alpha_{0}}\|_{L^\infty}\|J^{s-1}\nabla\overline{\omega}\|_{L^2}\Big)
\|J^s\overline{\omega}\|_{L^2}.
\end{split}\end{equation}
Note that $0<\alpha_0<\frac12$ and
$$
u^{\alpha_0}=\nabla^{\perp}(-\Delta)^{-1+\alpha_0}\omega^{\alpha_0}.
$$
Using Lemma \ref{Hardy} yields
$$\|J^su^{\alpha_{0}}\|_{L^\frac{1}{\alpha_0}}\leq C\norm{\omega^{\alpha_0}}_{H^s}.$$
Applying Lemma \ref{embedding1} and Lemma \ref{Hardy} gives
$$\|\nabla\overline{\omega}\|_{L^\frac{2}{1-2\alpha_0}}\leq C\|\overline{\omega}\|_{H^s},$$
\begin{equation*}\begin{split}
\|\nabla u^{\alpha_{0}}\|_{L^\infty}\leq \norm{u^{\alpha_{0}}}_{W^{s,\frac{1}{\alpha_0}}}\leq C\norm{\omega^{\alpha_0}}_{H^{s}}.
\end{split}
\end{equation*}
Thus, \begin{equation}\label{wsp-1}\begin{split}
|I_1|\leq C\|\overline{\omega}\|_{H^s}^2\norm{\omega^{\alpha_0}}_{H^{s}}.
\end{split}\end{equation}
By the decomposition \eqref{diff-1-1}, the second term can be written as
\begin{equation*}\begin{split}
 I_2=&-\int_{\R^2}(J^s(\overline{u}\cdot\nabla \overline{\omega})-\overline{u}\cdot\nabla J^s\overline{\omega})
 J^s\overline{\omega}\,\mathrm{d}x\\=&
-\int_{\R^2}(J^s(\overline{u}_I\cdot\nabla \overline{\omega})-\overline{u}_I\cdot\nabla J^s\overline{\omega})
J^s\overline{\omega}\,\mathrm{d}x-\int_{\R^2}\big( J^s(\overline{u}_{II}\cdot\nabla \overline{\omega})-\overline{u}_{II}\cdot\nabla J^s\overline{\omega}\big)
J^s\overline{\omega}\,\mathrm{d}x.
\end{split}
\end{equation*}
Using  \eqref{com-2} and the Sobolev embedding inequalities, we obtain
\begin{equation*}\begin{split}
&-\int_{\R^2}\big(J^s(\overline{u}_I\cdot\nabla \overline{\omega})-\overline{u}_I\cdot\nabla J^s\overline{\omega}\big)J^s\overline{\omega}\,\mathrm{d}x\\ \leq&\big(\|J^s\overline{u}_I\|_{L^\frac1\alpha}\|\nabla\overline{\omega}\|_{L^\frac{2}{1-2\alpha}}+\|\nabla \overline{u}_I\|_{L^\infty}\|J^{s-1}\nabla\overline{\omega}\|_{L^2}\big)
\|J^s\overline{\omega}\|_{L^2}\\ \leq&\|\overline{\omega}\|_{H^s}^2\|J^s\overline{u}_I\|_{L^\frac1\alpha}.
\end{split}
\end{equation*}
Similarly,  for $p>\frac1\alpha>2,$
\begin{equation*}\begin{split}
&-\int_{\R^2}\big(J^s(\overline{u}_{II}\cdot\nabla \overline{\omega})-\overline{u}_{II}\cdot\nabla J^s\overline{\omega}\big)J^s\overline{\omega}\,\mathrm{d}x\\ \leq&\big(\|J^s\overline{u}_{II}\|_{L^p}\|\nabla\overline{\omega}\|_{L^\frac{2p}{p-2}}+
\|\nabla \overline{u}_{II}\|_{L^\infty}\|J^{s-1}\nabla\overline{\omega}\|_{L^2}\big)
\|J^s\overline{\omega}\|_{L^2}\\ \leq&\|\overline{\omega}\|_{H^s}^2\|J^s\overline{u}_{II}\|_{L^p}.
\end{split}
\end{equation*}
Therefore,
\begin{equation}\label{wsp-3}
|I_2|\leq
\|\overline{\omega}\|_{H^s}^2(\|J^s\overline{u}_I\|_{L^\frac1\alpha}+\|J^s\overline{u}_{II}\|_{L^p}).
\end{equation}
Concerning the third term, we use  \eqref{com-1} and the Sobolev embedding inequalities to get
\begin{equation}\label{wsp-4}\begin{split}
|I_3|=&\Big|\int J^s(\overline{u}\cdot\nabla \omega^{\alpha_{0}})J^s\overline{\omega}\,\mathrm{d}x\Big|\\
=&
\Big|\int J^s(\overline{u}_{I}\cdot\nabla \omega^{\alpha_{0}})J^s\overline{\omega}\,\mathrm{d}x+\int J^s(\overline{u}_{II}\cdot\nabla \omega^{\alpha_{0}})J^s\overline{\omega}\,\mathrm{d}x\Big|
\\ \leq&
\|\overline{\omega}\|_{H^s}(\|J^s\overline{u}_I\|_{L^\frac1\alpha}+\|J^s\overline{u}_{II}\|_{L^p})\|\omega^{\alpha_{0}}\|_{H^{s+1}}.
\end{split}\end{equation}
Plugging  these estimates \eqref{wsp-1},    \eqref{wsp-3},  \eqref{wsp-4} into \eqref{11-1} yields
\begin{equation}\label{Hs-est}\begin{split}
 \frac{\mathrm{d}}{\mathrm{d}t}\|\overline{\omega}(t)\|_{H^s}  \leq&
\|\overline{\omega}\|_{H^s}\big(\|\omega^{\alpha_{0}}\|_{H^{s}}
+\|J^s\overline{u}_I\|_{L^\frac1\alpha}+\|J^s\overline{u}_{II}\|_{L^p}\big)\\
&+
\|\omega^{\alpha_{0}}\|_{H^{s+1}}\big(\|J^s\overline{u}_I\|_{L^\frac1\alpha}+
\|J^s\overline{u}_{II}\|_{L^p}\big)\\
:=&\tilde {I_1}+\tilde {I_2}.
\end{split}
\end{equation}
The integral form of $\overline{u}_I$ can be written as
\begin{equation*}
J^s\overline{u}_I(x)=\int_{\R^2}\frac{(x-y)^{\perp}}{|x-y|^{2+2\alpha}}
J^s\overline{\omega}(y)\,\mathrm{d}y.
\end{equation*}
Then, using  Lemma \ref{Hardy} enables us to get
\begin{equation}\label{4-16-1}
\begin{split}
\|J^s\overline{u}_I\|_{L^{\frac1\alpha}}&\leq\Big\|\int_{\R^2}\frac{1}{|x-y|^{2-(1-2\alpha)}}
|J^s\overline{\omega}(y)|\,\mathrm{d}y\Big\|_{L^{\frac1\alpha}}\\&\leq
C(\alpha)\|J^s\overline{\omega}\|_{L^{2}},
\end{split}\end{equation}
where $C(\alpha)$ depends on $\alpha$ and will be bounded if $0\le \alpha,\alpha_0<\frac12$ (but will be unbounded if $\alpha$ tend to $\frac12$).
When $p>\frac1\alpha>2,$ adopting to the similar way to \eqref{4-16-1} gives
\begin{equation}\label{4-16-2}\begin{split}
\|J^s\overline{u}_{II}\|_{L^p}&\leq C(\alpha)\big(\|J^s\omega^{\alpha_0}\|_{L^{\frac{2p}{2+p(1-2\alpha)}}}+
 \|J^s\omega^{\alpha_0}\|_{L^{\frac{2p}{2+p(1-2\alpha_0)}}}\big)
 \\&\leq C(\alpha)\|\omega^{\alpha_0}\|_{H^{s+1}}.
\end{split}\end{equation}
Hence,
\begin{equation}\label{Hs-1}
\tilde {I_1} \leq  C\|\overline{\omega}\|_{H^s}^{2}+C\|\overline{\omega}\|_{H^{s}}
\|\omega^{\alpha_0}\|_{H^{s+1}}.
\end{equation}
On the other hand, the estimate \eqref{4-16-2} is not adaptable to $\tilde {I_2}$ in \eqref{Hs-est}. We will use a different way to estimate
$\|J^s\overline{u}_{II}\|_{L^p}$ in \eqref{Hs-est}.
For $0<\epsilon<1$ to be determined later, we write  $\overline{u}_{II}$ as
\begin{equation*}\begin{split}
 \overline{u}_{II}=\int_{\R^2}\Big(\frac{(x-y)^{\perp}}{|x-y|^{2+2\alpha}}
-\frac{(x-y)^{\perp}}{|x-y|^{2+2\alpha_0}}\Big)\omega^{\alpha_0}(y)\,\mathrm{d}y.
\end{split}\end{equation*}
Therefore,
\begin{equation}\label{11-6-0}\begin{split}
 J^s\overline{u}_{II}&=\int_{\R^2}\Big(\frac{(x-y)^{\perp}}{|x-y|^{2+2\alpha}}
-\frac{(x-y)^{\perp}}{|x-y|^{2+2\alpha_0}}\Big)J^s\omega^{\alpha_0}(y)\,\mathrm{d}y
\\&=\Big(\int_{|x-y|\leq\epsilon}+\int_{1>|x-y|\geq\epsilon}+\int_{|x-y|\geq1}\Big)\Big(\frac{(x-y)^{\perp}}{|x-y|^{2+2\alpha}}
-\frac{(x-y)^{\perp}}{|x-y|^{2+2\alpha_0}}\Big)J^s\omega^{\alpha_0}(y)\,\mathrm{d}y
\\&:=H_{1}+H_{2}+H_{3}.
\end{split}\end{equation}
For the first term $H_1,$ using the Young inequality and the Sobolev embedding, we get
\begin{equation*}\begin{split}
\|H_1\|_{L^{p}}&= \Big\|\int_{|x-y|\leq\epsilon}\Big(\frac{(x-y)^{\perp}}{|x-y|^{2+2\alpha}}
-\frac{(x-y)^{\perp}}{|x-y|^{2+2\alpha_0}}\Big)J^s\omega^{\alpha_0}(y)\,\mathrm{d}y\Big\|_{L^{p}}
\\&\leq C\Big(\frac{\epsilon^{1-2\alpha}}{1-2\alpha}+\frac{\epsilon^{1-2\alpha_0}}{1-2\alpha_0}\Big)
\|J^s\omega^{\alpha_0}\|_{L^{p}}
\\&\leq C\Big(\frac{\epsilon^{1-2\alpha}}{1-2\alpha}
+\frac{\epsilon^{1-2\alpha_0}}{1-2\alpha_0}\Big)\|\omega^{\alpha_0}\|_{H^{s+1}}.
\end{split}
\end{equation*}
As for $H_2$ and $H_3$, it is divided into two cases.

{\it Case 1: $\alpha_0>\alpha.$} By the mean value theorem, we can obtain
\begin{equation*}\begin{split}
H_2\leq|\alpha_0-\alpha|\int_{1>|x-y|\geq\epsilon}\frac{|\log|x-y||}{|x-y|^{1+2\alpha_0}}
|J^s\omega^{\alpha_0}(y)|\,\mathrm{d}y.
\end{split}
\end{equation*}
Utilizing the Young inequality yields
\begin{equation*}\begin{split}
\|H_2\|_{L^p}&\leq|\alpha_0-\alpha|\|J^s\omega^{\alpha_0}\|_{L^p}
\int_{1>|z|\geq\epsilon}\frac{|\log|z||}{|z|^{1+2\alpha_0}}\,\mathrm{d}z,
\\&\leq\frac{C}{1-2\alpha_0}|\alpha_0-\alpha||\log\epsilon|\|\omega^{\alpha_0}\|_{H^{s+1}}.
\end{split}
\end{equation*}
  To deal with $H_3$, we fix a small number $\sigma>0$ such that $p>\frac{2}{2\alpha-\sigma}>2.$ Thanks to the fact that $\log |x-y|\le C|x-y|^\sigma$ for any $\sigma>0$ and $|x-y|\ge 1$, we have
\begin{equation*}\begin{split}
\|H_3\|_{L^p}&\leq |\alpha_0-\alpha|\Big\|\int_{|x-y|\geq1}\frac{|\log|x-y||}{|x-y|^{1+2\alpha}}
|J^s\omega^{\alpha_0}(y)|\,\mathrm{d}y\Big\|_{L^p}\\&\leq|\alpha_0-\alpha|\|J^s\omega^{\alpha_0}\|_{L^r}
\Big(\int_{|z|\geq1}\Big(\frac{|\log|z||}{|z|^{1+2\alpha}}\Big)^q\,\mathrm{d}z\Big)^{\frac1q}
\\
&\leq|\alpha_0-\alpha|\|J^s\omega^{\alpha_0}\|_{L^r}
\Big(\int_{|z|\geq1}\Big(\frac{1}{|z|^{1+2\alpha-\sigma}}\Big)^q\,\mathrm{d}z\Big)^{\frac1q}\\
&\leq|\alpha_0-\alpha|\|J^s\omega^{\alpha_0}\|_{L^r}\Big(\frac{1}{(1+2\alpha-\sigma)q-2}\Big)^{\frac1q},
\end{split}
\end{equation*}where $\frac1p+1=\frac1r+\frac1q,$ $q>\frac{2}{1+2\alpha-\sigma}$, $p>\frac{2}{2\alpha-\sigma}$, then we can choose some $r>2$ such that
\begin{equation*}
H^{s+1}(\R^2)\hookrightarrow W^{s,r}(\R^2)
\end{equation*}holds (see Lemma \ref{embedding1}).

{\it Case 2: $\alpha_0<\alpha<\frac12.$} It is similar to {\it Case 1} by exchanging the position of $\alpha$ and $\alpha_0$. For instance, by the mean value theorem, we can obtain
\begin{equation*}\begin{split}
H_2\leq|\alpha_0-\alpha|\int_{1>|x-y|\geq\epsilon}\frac{|\log|x-y||}{|x-y|^{1+2\alpha}}
|J^s\omega^{\alpha_0}(y)|\,\mathrm{d}y.
\end{split}
\end{equation*}
Then
\begin{equation*}\begin{split}
\|H_2\|_{L^p}&\leq|\alpha_0-\alpha|\|J^s\omega^{\alpha_0}\|_{L^p}
\int_{1>|z|\geq\epsilon}\frac{|\log|z||}{|z|^{1+2\alpha}}\,\mathrm{d}z,
\\&\leq\frac{C}{1-2\alpha}|\alpha_0-\alpha||\log\epsilon|\|\omega^{\alpha_0}\|_{H^{s+1}}.
\end{split}
\end{equation*}
Now  we fix a small number $\sigma>0$ such that $p>\frac{2}{2\alpha_0-\sigma}>2.$ Similarly, we have
\begin{equation}\label{H3E}\begin{split}
\|H_3\|_{L^p}
&\leq |\alpha_0-\alpha|\Big\|\int_{|x-y|\geq1}\frac{|\log|x-y||}{|x-y|^{1+2\alpha_0}}
|J^s\omega^{\alpha_0}(y)|\,\mathrm{d}y\Big\|_{L^p}
\\
&\leq|\alpha_0-\alpha|\|J^s\omega^{\alpha_0}\|_{L^r}\Big(\frac{1}{(1+2\alpha_0-\sigma)q-2}\Big)^{\frac1q},
\end{split}
\end{equation}where $\frac1p+1=\frac1r+\frac1q,$ $q>\frac{2}{1+2\alpha-\sigma}$, $p>\frac{2}{2\alpha-\sigma}$, then we can choose some $r>2$ such that
\begin{equation*}
H^{s+1}(\R^2)\hookrightarrow W^{s,r}(\R^2)
\end{equation*}holds (see Lemma \ref{embedding1}).

As a consequence, we get
\begin{equation*}
\|J^s\overline{u}_{II}(t)\|_{L^p}\leq C\left(\epsilon^{1-2\alpha}
+\epsilon^{1-2\alpha_0}+|\alpha_0-\alpha|+|\alpha_0-\alpha||\log\epsilon|\right)\|\omega^{\alpha_0}\|_{H^{s+1}}.
\end{equation*}
Hence,
\begin{equation}\label{Hs-2}
\begin{split}
\tilde {I_2}=&\|\omega^{\alpha_{0}}\|_{H^{s+1}}(\|J^s\overline{u}_I\|_{L^\frac1\alpha}+
\|J^s\overline{u}_{II}\|_{L^p})\\ \leq & C\|\omega^{\alpha_{0}}\|_{H^{s+1}}\|\overline{\omega}\|_{H^s}
+\|\omega^{\alpha_{0}}\|_{H^{s+1}}^2\big(\epsilon^{1-2\alpha}+\epsilon^{1-2\alpha_0}
+|\alpha_0-\alpha|+|\alpha_0-\alpha||\log\epsilon|\big).
\end{split}\end{equation}
Set  $\epsilon=|\alpha_0-\alpha|$.
By plugging \eqref{Hs-1} and \eqref{Hs-2} into \eqref{Hs-est}, we get
\begin{equation}\label{Hs-3}\begin{split}
 \frac{\mathrm{d}}{\mathrm{d}t}\|\overline{\omega}(t)\|_{H^s}  \leq&
C\|\omega^{\alpha_0}\|_{H^{s+1}}\|\overline{\omega}\|_{H^s}
 +C\|\overline{\omega}\|_{H^s}^{2}\\
&+\|\omega^{\alpha_{0}}\|_{H^{s+1}}^2\left(|\alpha_0-\alpha|^{1-2\alpha}+|\alpha_0-\alpha|^{1-2\alpha_0}+|\alpha_0-\alpha||\log |\alpha_0-\alpha||\right).
\end{split}\end{equation}
Multiply \eqref{Hs-3} by $\exp(-C\int_0^t\|\omega^{\alpha_0}\|_{H^{s+1}}\,\mathrm{d}s)$ and consider the quantity
$$y(t)=\|\overline{\omega}\|_{H^s}\exp\Big(-C\int_0^t\|\omega^{\alpha_0}\|_{H^{s+1}}\,\mathrm{d}s\Big).$$
We then get the inequality
\begin{equation*}
\frac{\mathrm{d}y(t)}{\mathrm{d}t}\leq \left(|\alpha_0-\alpha|^{1-2\alpha}+|\alpha_0-\alpha|^{1-2\alpha_0}+|\alpha_0-\alpha||\log |\alpha_0-\alpha|\right)F(t)+Gy^2(t),
\end{equation*}
where $$F(t)=\|\omega^{\alpha_{0}}\|_{H^{s+1}}^2\exp\Big(-C\int_0^t\|\omega^{\alpha_0}(s)\|_{H^{s+1}}\,\mathrm{d}s\Big),$$
and $$G=C\exp\Big(C\int_0^T\|\omega^{\alpha_0}(t)\|_{H^{s+1}}\,\mathrm{d}t\Big).$$
By Proposition  \ref{o}, there exists a $\delta>0$ depending on $T$ and $\int_0^T \|\omega^{\alpha_0}\|_{H^{s+1}} \,\mathrm{d}t$ such that when $0<\alpha<\frac12$ and $|\alpha-\alpha_0|<\delta$,
\begin{equation*}
y(t)\leq C\left(|\alpha_0-\alpha|^{1-2\alpha}+|\alpha_0-\alpha|^{1-2\alpha_0}+|\alpha_0-\alpha||\log |\alpha_0-\alpha|\right)\int_0^T F(t)\,\mathrm{d}t,
\end{equation*}
which implies that
\begin{equation*}
\|\overline{\omega}\|_{H^s}\leq C\left(|\alpha_0-\alpha|^{1-2\alpha}+|\alpha_0-\alpha|^{1-2\alpha_0}+|\alpha_0-\alpha||\log|\alpha_0-\alpha||\right).
\end{equation*}
Here $C>0$ is a constant depending on $T$ and $\int_0^T \|\omega^{\alpha_0}\|_{H^{s+1}} \,\mathrm{d}t$ as well.

Assume that  $\omega^{\alpha_{0}}\in C([0,T_0];H^{s+1})$, $s>2.$ According to the local well-posedness theory,  for $\alpha>0$ and $|\alpha-\alpha_0|<\delta$, $\omega^{\alpha}\in C([0,T_{max});H^{s+1})$, where $T_{max}>0$ denotes the maximal existence time.
If $T_{max}\geq T_0$, the proof is finished. If $T_{max}<T_0$, we are going  to prove that $T_{max}$ can be extended to $T_0$.  By performing  the $(s+1)$-order energy estimate, we get
\begin{equation*}\begin{split}
\frac12\frac{\mathrm{d}}{\mathrm{d}t}\|\omega^\alpha(t)\|_{H^{s+1}}^2
&=-\int_{\R^2}\Lambda^{s+1}\big(u^{\alpha}\cdot\nabla \omega^\alpha\big)\Lambda^{s+1}\omega^\alpha\,\mathrm{d}x
\\&=-\int_{\R^2}\Lambda^{s+1}\big(u^{\alpha}\cdot\nabla \omega^\alpha-u^{\alpha}\cdot\Lambda^{s+1}\nabla \omega^\alpha\big)\Lambda^{s+1}\omega^\alpha\,\mathrm{d}x
\\&\leq\|\omega^\alpha\|_{H^{s+1}}\left(\|\Lambda^{s+1}u^\alpha\|_{L^{\frac1\alpha}}\|\nabla \omega^\alpha\|_{L^{\frac{2}{1-2\alpha}}}
+\|\nabla u^\alpha\|_{L^\infty}\|\omega^\alpha\|_{H^{s+1}}\right)
\\&\leq\|\omega^\alpha\|_{H^{s+1}}^2\left(\|\nabla \omega^\alpha\|_{L^{\frac{2}{1-2\alpha}}}+\|\nabla u^\alpha\|_{L^\infty}\right)
\\&\leq\|\omega^\alpha\|_{H^{s+1}}^2\|\omega^\alpha\|_{H^{s}}.
\end{split}\end{equation*}
By the Gronwall inequality, we have
\begin{equation*}\begin{split}
\|\omega^\alpha(t)\|_{H^{s+1}}&\leq e^{\int_0^{T_{max}}\|\omega^\alpha(t)\|_{H^{s}}\,\mathrm{d}t}\|\omega^\alpha_0\|_{H^{s+1}}
\\&\leq e^{\int_0^{T_{max}}(\|\overline{\omega}(t)\|_{H^{s}}+\|\omega^{\alpha_0}(t)\|_{H^{s}})\,\mathrm{d}t}\|\omega^\alpha_0\|_{H^{s+1}}
\leq C
\end{split}\end{equation*}
for $t\in [0,{T_{max}}]$ and hence $\omega^\alpha(T_{max})$ is finite. This deduces a contradiction with $T_{max}$ is the maximal existence time by using the local well-posedness theory. In consequence, $T_{max}=T_0$ as required and the proof of the theorem is finished.

\subsection{Proof of Theorem \ref{th1+}}

Now we prove Theorem \ref{th1+} which corresponds to the case $\alpha_0=0$ (The 2D incompressible Euler equations).  After letting $\alpha_0=0$, we can go through the proof of  Theorem \ref{th1} except the estimates on  $I_1$ in \eqref{11-1},  on the term including $\alpha_0$ in~\eqref{4-16-2} and on the term $H_3$ in \eqref{H3E}.
 For conciseness, it is only needed to give estimates on these terms as follows.

 Firstly, to deal with the term $I_1$ in \eqref{11-1}, we rewrite \eqref{I1E} as
\begin{equation}\label{I1E2}\begin{split}
I_1=&-\int_{\R^2} (J^s(u^{{0}}\cdot\nabla \overline{\omega})-u^{{0}}\cdot\nabla J^s\overline{\omega})
J^s\overline{\omega}\,\mathrm{d}x\\ \leq&
\big\|J^s(u^{{0}}\cdot\nabla \overline{\omega})-u^{{0}}\cdot\nabla J^s\overline{\omega}\big\|_{L^2}
\|J^s\overline{\omega}\|_{L^2}\\ \leq&\Big(\|J^su^{0}\|_{L^p}\|\nabla\overline{\omega}\|_{L^\frac{2p}{p-2}}+\|\nabla u^{{0}}\|_{L^\infty}\|J^{s-1}\nabla\overline{\omega}\|_{L^2}\Big)
\|J^s\overline{\omega}\|_{L^2}
\end{split}\end{equation}
for some $p>2$.

Note that
$$
u^{0}=\nabla^{\perp}(-\Delta)^{-1}\omega^{0}.
$$
Using Lemma \ref{Hardy} yields
\begin{equation}\label{18-9-1}\begin{split}
\|J^su^{{0}}\|_{L^p}\leq C\norm{J^s\omega^{0}}_{L^{\frac{2p}{p+2}}}=C(\norm{\Lambda^s\omega^{0}}_{L^{\frac{2p}{p+2}}}+\norm{\omega^{0}}_{L^{\frac{2p}{p+2}}})
\end{split}
\end{equation}
for some constant $C>0$.

By the Gagliardo-Nirenberg-Sobolev inequality, one has
\begin{equation}\label{18-9-2}\begin{split}
\norm{\omega^{0}}_{L^{\frac{2p}{p+2}}}\le \norm{\omega^{0}}^\theta_{L^2}\norm{\omega^{0}}^{1-\theta}_{L^a},
\end{split}
\end{equation}
where $1\le a<2$ is given in Theorem \ref{th1+} and
$$
\theta=\frac{2-a-\frac{2}{p}a}{2-a}.
$$
To guarantee $0<\theta<1$, it suffices to choose $p>\frac{2a}{2-a}\ge 2$. Moreover, given $p$ satisfying $p>\frac{2a}{2-a}$ and $p\ge\frac{2a(s+1)}{2-a}$, the following interpolation is direct
\begin{equation}\label{18-9-3}\begin{split}
\norm{\Lambda^s\omega^{0}}_{L^{\frac{2p}{p+2}}}\le \norm{\Lambda^{s+1}\omega^{0}}^\theta_{L^2}\norm{\omega^{0}}^{1-\theta}_{L^a}, \quad s>2,
\end{split}
\end{equation}
where $\theta=\frac{s+\frac{2}{a}-1-\frac2p}{s+\frac{2}{a}}$
satisfies $\frac{s}{s+1}\le\theta<1$.

 And
applying Lemma \ref{embedding1} and Lemma \ref{Hardy} gives
\begin{equation}\label{18-9-4}\begin{split}
\|\nabla\overline{\omega}\|_{L^\frac{2p}{p-2}}\leq C\|\overline{\omega}\|_{H^s},
\end{split}
\end{equation}
\begin{equation}\label{18-9-5}\begin{split}
\|\nabla u^{{0}}\|_{L^\infty}\leq \norm{\nabla u^{{0}}}_{H^{s-1}}\leq C\norm{\omega^{_0}}_{H^{s-1}}, \quad s>2.
\end{split}
\end{equation}
Thus, substituting \eqref{18-9-1}-\eqref{18-9-5} into \eqref{I1E2}, we obtain
\begin{equation}\label{18-9-6}\begin{split}
|I_1|\leq C\|\overline{\omega}\|_{H^s}^2(\norm{\omega^{0}}_{H^{s+1}}+\norm{\omega^{0}}_{L^a}).
\end{split}\end{equation}

Next, concerning the term including $\alpha_0$ in \eqref{4-16-2}, similar estimates as in \eqref{18-9-2} and  \eqref{18-9-3} yield
$$
\norm{\omega^{0}}_{L^{\frac{2p}{p+2}}}\le C (\norm{\omega^{0}}_{H^{s+1}}+\norm{\omega^{0}}_{L^a})
$$
and hence the estimate \eqref{4-16-2} can be rewritten as
\begin{equation}\label{18-9-7}\begin{split}
\|J^s\overline{u}_{II}\|_{L^p}\leq C(\|\omega^{0}\|_{H^{s+1}}+\norm{\omega^{0}}_{L^a}),
\end{split}\end{equation}
which implies that \eqref{Hs-1} becomes
\begin{equation}\label{18-9-8}
\tilde {I_1} \leq  C\|\overline{\omega}\|_{H^s}^{2}+C\|\overline{\omega}\|_{H^{s}}
(\|\omega^{0}\|_{H^{s+1}}+\norm{\omega^{0}}_{L^a}).
\end{equation}
Lastly, concerning the term $H_3$ in \eqref{H3E},   we fix a small number $\sigma>0$ such that
\begin{equation}\label{18-9-9}\begin{split}
\|H_3\|_{L^p}
&\leq \alpha\Big\|\int_{|x-y|\geq1}\frac{|\log|x-y||}{|x-y|}
|J^s\omega^{0}(y)|\,\mathrm{d}y\Big\|_{L^p}
\\
&\leq\alpha\|J^s\omega^{0}\|_{L^r}\Big(\frac{1}{(1-\sigma)q-2}\Big)^{\frac1q},
\end{split}
\end{equation}
where $\frac1p+1=\frac1r+\frac1q,$ $q>\frac{2}{1-\sigma}$ and $r<2$.

Similar to  \eqref{18-9-2} and \eqref{18-9-3}, we have
\begin{equation}\label{18-9-10}\begin{split}
\norm{\omega^{0}}_{L^r}\le \norm{\omega^{0}}^\theta_{L^2}\norm{\omega^{0}}^{1-\theta}_{L^a},
\end{split}
\end{equation}
where $1\le a<2$ is given in Theorem \ref{th1+} and
$$
\theta=\frac{\frac1r-\frac12}{\frac1a-\frac12}.
$$
To guarantee $0<\theta<1$, it suffices to choose $a<r<2$, which implies that $p>\frac{2a}{2-a(\sigma+1)}$.  Moreover, given $r$ satisfying $a<r<2$ and $\frac{2a(s+1)}{as+2}\le r<2$ , it holds
\begin{equation}\label{18-9-11}\begin{split}
\norm{\Lambda^s\omega^{0}}_{L^r}\le \norm{\Lambda^{s+1}\omega^{0}}^\theta_{L^2}\norm{\omega^{0}}^{1-\theta}_{L^a}, \quad s>2,
\end{split}
\end{equation}
where $\theta=\frac{s+\frac2a-\frac2r}{s+\frac2a}$ lies in $[\frac{s}{s+1},1)$.

It follows from \eqref{18-9-10} and \eqref{18-9-11} that
\begin{equation}\label{18-9-12}\begin{split}
\|J^s\omega^{0}\|_{L^r}\le C(\|\omega^{0}\|_{H^{s+1}}+\norm{\omega^{0}}_{L^a}).
\end{split}
\end{equation}
Putting \eqref{18-9-12} into \eqref{18-9-9} yields
\begin{equation}\label{18-9-13}\begin{split}
\|H_3\|_{L^p}\le C(\|\omega^{0}\|_{H^{s+1}}+\norm{\omega^{0}}_{L^a})
\end{split}
\end{equation}
with $1\le a<2$.

 Applying estimates \eqref{18-9-6}, \eqref{18-9-8} and \eqref{18-9-12} obtained above and other parts same as in the proof of Theorem \ref{th1} with $\alpha_0=0$,  we finally can rewrite \eqref{Hs-3} as
\begin{equation}\label{Hs-31}\begin{split}
 \frac{\mathrm{d}}{\mathrm{d}t}\|\overline{\omega}(t)\|_{H^s}  \leq&
C(\|\omega^{0}\|_{H^{s+1}}+\|\omega^0\|_{L^a})\|\overline{\omega}\|_{H^s}
 +C\|\overline{\omega}\|_{H^s}^{2}\\
&+\|\omega^{{0}}\|_{H^{s+1}}(\|\omega^{0}\|_{H^{s+1}}+\|\omega^0\|_{L^a})\left(\alpha^{1-2\alpha}+\alpha+\alpha|\log |\alpha||\right),
\end{split}\end{equation}
which yields
\begin{equation*}
\|\overline{\omega}\|_{H^s}\leq C\left(\alpha^{1-2\alpha}+\alpha(1+|\log|\alpha||)\right).
\end{equation*}
Here $C>0$ is a constant depending on $T$ and $\int_0^T \|\omega^{\alpha_0}\|_{H^{s+1}\cap L^a} \,\mathrm{d}t$ as well for any $1\le a<2$. The proof of Theorem \ref{th1+} can be finished as in Theorem \ref{th1}.

\subsection{Proof of Theorem \ref{th2}}

Taking the scalar product of \eqref{diffrence} with $\overline{\omega}$ in $H^s$ and using Lemma \ref{commutator} and the Sobolev embedding inequalities enable us to get
\begin{equation}\label{Hs-est-1-1}\begin{split}
\frac{\mathrm{d}}{\mathrm{d}t}\|\overline{\omega}(t)\|_{H^s}&\leq
\|\overline{\omega}\|_{H^s}\|u^{\alpha_{0}}\|_{H^{s}}+
(\|\overline{\omega}\|_{H^s}+\|\omega^{\alpha_{0}}\|_{H^{s+1}})\|\overline{u}\|_{H^{s}}\\
&\leq
\|\overline{\omega}\|_{H^s}\|\omega^{\alpha_{0}}\|_{H^{s}}+
(\|\overline{\omega}\|_{H^s}+\|\omega^{\alpha_{0}}\|_{H^{s+1}})(\|\overline{u}_{I}\|_{H^{s}}+\|\overline{u}_{II}\|_{H^{s}}).
\end{split}\end{equation}
Here we have used the decomposition \eqref{diff-1-1} with
\begin{equation*}
\overline{u}_{I}=\nabla^{\perp}(-\Delta)^{-1+\alpha}\overline{\omega}, \quad \overline{u}_{II}=\left(\nabla^{\perp}(-\Delta)^{-1+\alpha}-\nabla^{\perp}(-\Delta)^{-1+\alpha_0}\right)\omega^{\alpha_0}.
\end{equation*}
By using Proposition \ref{uni-est} (Remark \ref{Rm2-1})  with $\beta=1-2\alpha$, we obtain
\begin{equation}\label{u1-est}
\|\overline{u}_{I}(t)\|_{H^{s}}\leq
C\big(\|\overline{\omega}\|_{H^s}+(1-2\alpha)\|\overline{\omega}\|_{L^1}\big),
\end{equation}where $C=C(\alpha,s)$ is an absolutely constant when $\alpha\rightarrow\frac12.$
By using Proposition \ref{uni-est} again ($\beta=1-2\alpha$ and $\beta=0$ respectively), there also exists a uniformly bounded constant $C=C(\alpha,s)$ when $\alpha\rightarrow\frac12$ such that
 \begin{equation*}\| \overline{u}_{II}(t)\|_{H^s}\leq C\big(\|\omega^{\alpha_0}\|_{H^s}+\|\omega^{\alpha_0}\|_{L^1}\big).
\end{equation*}
It follows that \begin{equation}\label{equ-12}
\begin{split}
\|\overline{\omega}\|_{H^s}(\|\overline{u}_{I}\|_{H^{s}}+\|\overline{u}_{II}\|_{H^{s}})\leq C\|\overline{\omega}\|_{H^s}^2+
C\|\overline{\omega}\|_{H^s}(\|\omega^{\alpha_0}\|_{H^s}+
\|\omega^{\alpha_0}\|_{L^1}+\|\omega^{\alpha}\|_{L^1}).
\end{split}\end{equation}

Now we adopt to anther way to estimate $\|\overline{u}_{II}\|_{H^s}$ in order to deal with $\|\omega^{\alpha_0}\|_{H^{s+1}}\|\overline{u}_{II}\|_{H^s}$ on the right hand side of \eqref{Hs-est-1-1}. The decomposition \eqref{11-6-0} will be applied, which is
\begin{equation}\label{11-6}\begin{split}
 J^s\overline{u}_{II}&=\int_{\R^2}\Big(\frac{(x-y)^{\perp}}{|x-y|^{2+2\alpha}}
-\frac{(x-y)^{\perp}}{|x-y|^{2+2\alpha_0}}\Big)J^s\omega^{\alpha_0}(y)\,\mathrm{d}y
\\&=\Big(\int_{|x-y|\leq\epsilon}+\int_{1>|x-y|\geq\epsilon}+\int_{|x-y|\geq1}\Big)\Big(\frac{(x-y)^{\perp}}{|x-y|^{2+2\alpha}}
-\frac{(x-y)^{\perp}}{|x-y|^{2+2\alpha_0}}\Big)J^s\omega^{\alpha_0}(y)\,\mathrm{d}y
\\&=H_{1}+H_{2}+H_{3},
\end{split}\end{equation}
where  $0<\epsilon<1$ is to be determined later.

Performing the fact that $$\int_{|x|=1}\frac{x^\perp}{|x|^{2+2\alpha}}\,\mathrm{d}s=\int_{|x|=1}\frac{x^\perp}{|x|^{3}}\,\mathrm{d}s=0,$$
we get
\begin{equation*}\begin{split}
H_{1}&=\int_{|x-y|\leq\epsilon}\Big(\frac{(x-y)^{\perp}}{|x-y|^{2+2\alpha}}-\frac{(x-y)^{\perp}}{|x-y|^{3}}\Big)
\Big(J^s\omega^{\alpha_0}(y)-J^s\omega^{\alpha_0}(x)\Big)\,\mathrm{d}y
\\&=\int_{|z|\leq\epsilon}\Big(\frac{z^{\perp}}{|z|^{2+2\alpha}}-\frac{z^{\perp}}{|z|^{3}}\Big)\big(J^s\omega^{\alpha_0}(x-z)-J^s\omega^{\alpha_0}(x)\big)\,\mathrm{d}z.
\end{split}
\end{equation*}
From the mean value theorem, we deduce that
\begin{equation*}\begin{split}
|H_{1}|&\leq\int_0^1\int_{|z|\leq\epsilon}
\Big(\frac{1}{|z|^{2\alpha}}+\frac{1}{|z|}\Big)\big|\nabla J^s\omega^{\alpha_0}(x-\tau z)\big|\,\mathrm{d}z\mathrm{d}\tau.
\end{split}
\end{equation*}
Now, taking the $L^2$ norm of the above inequality, and using the fact that the norm of an integral is less that the integral of the norm, we get
\begin{equation*}\begin{split}
\|H_{1}\|_{L^2}&\leq \int_0^1\int_{|z|\leq\epsilon}
\Big(\frac{1}{|z|^{2\alpha}}+\frac{1}{|z|}\Big)\big\|\nabla J^s\omega^{\alpha_0}(\cdot-\tau z)\big\|_{L^2}\,\mathrm{d}z\mathrm{d}\tau.
\end{split}
\end{equation*}
The translation invariance of the Lebesgue measure then ensures that
\begin{equation}\label{11-7}\begin{split}
\|H_{1}\|_{L^2}&\leq C\Big(\frac{1}{2-2\alpha}\epsilon^{2-2\alpha}+\epsilon\Big)\big\|J^{s+1}\omega^{\alpha_0}\big\|_{L^2}.
\end{split}
\end{equation}
For $2+2\alpha\leq\xi\leq3,$ we estimate $H_{2}$ as follows,
\begin{equation}\label{11-8}\begin{split}
\|H_{2}\|_{L^2}&= \Big(\frac12-\alpha\Big)\Big\|\int_{1>|x-y|\geq\epsilon}\frac{(x-y)^{\perp}\big(|x-y|^{\xi}\log|x-y|\big)}{|x-y|^{2+2\alpha}|x-y|^{2+2\alpha_0}}
J^s\omega^{\alpha_0}(y)\,\mathrm{d}y\Big\|_{L^2}
\\&\leq \Big(\frac12-\alpha\Big)\Big\|\int_{1>|x-y|\geq\epsilon}\frac{|\log|x-y||}{|x-y|^{2}}
|J^s\omega^{\alpha_0}(y)|\,\mathrm{d}y\Big\|_{L^2}\\
&\leq\Big(\frac12-\alpha\Big)|\log\epsilon|^2\|J^s\omega^{\alpha_0}\|_{L^2},
\end{split}
\end{equation}
where the Young inequality and the mean value theorem have been used.
Adopting to the similar method to estimate $H_{2}$, we get
\begin{equation}\label{11-9}\begin{split}
\|H_{3}\|_{L^2}&\leq \Big(\frac12-\alpha\Big)\Big\|\int_{|x-y|\geq1}\frac{|\log|x-y||}{|x-y|^{1+2\alpha}}
|J^s\omega^{\alpha_0}(y)|\,\mathrm{d}y\Big\|_{L^2}\\&\leq(\frac12-\alpha)\|J^s\omega^{\alpha_0}\|_{L^p}
\Big(\int_{|z|\geq1}\Big(\frac{|\log|z||}{|z|^{1+2\alpha}}\Big)^q\,\mathrm{d}z\Big)^{\frac1q}
\\&\leq\Big(\frac12-\alpha\Big)\|J^s\omega^{\alpha_0}\|_{L^p}\Big(\frac{1}{(1+2\alpha-\sigma)q-2}\Big)^{\frac1q},
\end{split}
\end{equation} where $\frac32=\frac1p+\frac1q,$ $q>\frac{2}{1+2\alpha-\sigma}$ whence $p<\frac{2}{2-2\alpha+\sigma}<2$.
By the Gagliardo-Nirenberg inequality, we have
\begin{equation*}\begin{split}
\|\Lambda^s\omega^{\alpha_0}\|_{L^p}\leq\|\omega^{\alpha_0}\|_{L^1}^{1-\theta} \|\Lambda^{s+1}\omega^{\alpha_0}\|_{L^2}^{\theta},
\end{split}
\end{equation*}
where $\theta=1-\frac{2}{p(s+2)}.$

 Then, we conclude that $p\geq \frac{2(s+1)}{s+2}.$
This enables us to choose some $\frac{2(s+1)}{s+2}\leq p<\frac{2}{2-2\alpha+\sigma}.$
Combining the estimates \eqref{11-7}-\eqref{11-9} with \eqref{11-6}
and choosing $\epsilon=(\frac12-\alpha)$, we get
\begin{equation}\label{u2-est}\begin{split}
\|\overline{u}_{II}(t)\|_{H^{s}}\leq CL(\alpha)\big(\|\omega^{\alpha_0}\|_{H^{s+1}}+\|\omega^{\alpha_0}\|_{L^1}\big).
\end{split}
\end{equation}
Here and what in follow,
\[L(\alpha):=\Big(\frac12-\alpha\Big)^{2-2\alpha}
+\Big(\frac12-\alpha\Big)\Big|\log\Big(\frac12-\alpha\Big)\Big|^2
+\Big(\frac12-\alpha\Big). \]
Hence, combining \eqref{u1-est} and \eqref{u2-est} yields
\begin{equation}\label{equ-12-1}\begin{split}
&\|\omega^{\alpha_{0}}\|_{H^{s+1}}\|\overline{u}_{I}\|_{H^{s}}
+\|\omega^{\alpha_{0}}\|_{H^{s+1}}\|\overline{u}_{II}\|_{H^{s}}
\\ \leq&\|\omega^{\alpha_{0}}\|_{H^{s+1}}\|\overline{\omega}\|_{H^s}+
\Big(\frac12-\alpha\Big)\|\overline{\omega}\|_{L^1}\|\omega^{\alpha_{0}}\|_{H^{s+1}}
 +CL(\alpha)\big(\|\omega^{\alpha_0}\|_{H^{s+1}}^2+\|\omega^{\alpha_0}\|_{L^1}^2\big)
\\ \leq&\|\omega^{\alpha_{0}}\|_{H^{s+1}}
\|\overline{\omega}\|_{H^s}+CL(\alpha)\big(\|\omega^{\alpha_0}\|_{H^{s+1}}^2+
\|\omega^{\alpha_0}\|_{L^1}^2+\|\omega^{\alpha}\|_{L^1}^2\big).
\end{split}
\end{equation}
Plugging \eqref{equ-12} and  \eqref{equ-12-1} into \eqref{Hs-est-1-1} gives
\begin{equation}\label{Hs-est-s}\begin{split}
 \frac{\mathrm{d}}{\mathrm{d}t}\|\overline{\omega}(t)\|_{H^s}\leq&
\|\overline{\omega}\|_{H^s}(\|\omega^{\alpha_{0}}\|_{H^{s+1}}
+\|\omega^{\alpha_0}\|_{L^1}+\|\omega^{\alpha}\|_{L^1})+\|\overline{\omega}\|_{H^s}^2
\\&+CL(\alpha)(\|\omega^{\alpha_0}\|_{H^{s+1}}^2+
\|\omega^{\alpha_0}\|_{L^1}^2+\|\omega^{\alpha}\|_{L^1}^2).
\end{split}
\end{equation}
Thanks to the incompressible condition that $\nabla\cdot u=0$, it follows that  $\|\omega^{\alpha_0}\|_{L^1}+\|\omega^{\alpha}\|_{L^1}$ is bounded if  the initial data $\omega_0\in L^1$. Arguing similarly as the last part in the proof of Theorem \ref{th1}, we obtain
\begin{equation*}
\|\overline{\omega}(t)\|_{H^s}\leq C\left(\Big(\frac12-\alpha\Big)+\Big(\frac12-\alpha\Big)\log^2\Big(\frac12-\alpha\Big)\right).
\end{equation*}
Moreover, we can prove that $\omega^{\alpha}\in C([0,T];H^{s+1})$. The proof of the theorem is finished.

\subsection{Proof of Theorem \ref{th3}}

Similar to the proof Theorem \ref{th1}, it follows from the difference equation \eqref{diffrence} that \eqref{11-1} holds true.
The three terms $I_1,\, I_2,\, I_3$ on the right side of \eqref{11-1} will be estimated as follows. Applying the commutator estimates in Lemma \ref{commutator} and the Sobolev embedding inequalities, we immediately have
\begin{equation}\label{11-2}\begin{split}
|I_1|=&\Big|\int_{\R^2} \big(J^s(u^{\alpha_{0}}\cdot\nabla \overline{\omega})-u^{\alpha_{0}}\cdot\nabla J^s\overline{\omega}\big)
J^s\overline{\omega}\,\mathrm{d}x\Big|\\ \leq&
\|J^s(u^{\alpha_{0}}\cdot\nabla \overline{\omega})-u^{\alpha_{0}}\cdot\nabla J^s\overline{\omega}\|_{L^2}
\|J^s\overline{\omega}\|_{L^2}\\ \leq&(\|J^su^{\alpha_{0}}\|_{L^2}\|\nabla\overline{\omega}\|_{L^\infty}+\|\nabla u^{\alpha_{0}}\|_{L^\infty}\|J^{s-1}\nabla\overline{\omega}\|_{L^2})
\|J^s\overline{\omega}\|_{L^2}\\ \leq&\|\overline{\omega}\|_{H^s}^2\|u^{\alpha_{0}}\|_{H^{s}}.
\end{split}\end{equation}
Substituting the decomposition \eqref{diff-1-1} into $I_2$ and $I_3$, respectively, we have
\begin{equation}\label{I_2}\begin{split}
&I_2=-\int_{\R^2} J^s(\overline{u}_{I}\cdot\nabla \overline{\omega})J^s\overline{\omega}\,\mathrm{d}x-\int_{\R^2} J^s(\overline{u}_{II}\cdot\nabla \overline{\omega})J^s\overline{\omega}\,\mathrm{d}x:= I_{21}+I_{22};\\
&I_3=-\int_{\R^2} J^s(\overline{u}_{I}\cdot\nabla \omega^{\alpha_{0}})J^s\overline{\omega}\,\mathrm{d}x-\int_{\R^2} J^s(\overline{u}_{II}\cdot\nabla \omega^{\alpha_{0}})J^s\overline{\omega}\,\mathrm{d}x:= I_{31}+I_{32}.
\end{split}\end{equation}
Choose some $\sigma\in (0,2\alpha).$  For any $p>\frac{2}{2\alpha-\sigma}>2,$ applying Lemma \ref{commutator} and the Sobolev embedding inequalities again, we obtain
\begin{equation*}\begin{split}
|I_{22}|=&\Big|\int_{\R^2}\big(J^s(\overline{u}_{II}\cdot\nabla \overline{\omega})-\overline{u}_{II}\cdot\nabla J^s\overline{\omega}\big)J^s\overline{\omega}\,\mathrm{d}x\Big|\\ \leq&\|J^s(\overline{u}_{II}\cdot \nabla \overline{\omega})-\overline{u}_{II}\cdot J^s \nabla \overline{\omega}\|_{L^2}\|\overline{\omega}\|_{H^s}
\\ \leq &\big(\|J^s\overline{u}_{II}\|_{L^p}\|\nabla\overline{\omega}\|_{L^\frac{2p}{p-2}}+
\|\nabla \overline{u}_{II}\|_{L^\infty}\|J^{s-1}\nabla\overline{\omega}\|_{L^2}\big)
\|J^s\overline{\omega}\|_{L^2}\\ \leq&\|\overline{\omega}\|_{H^s}^2\|J^s\overline{u}_{II}\|_{L^p},
\end{split}
\end{equation*}
and
\begin{equation*}\begin{split}
|I_{32}|&\leq
\|J^s(\overline{u}_{II}\cdot\nabla \omega^{\alpha_{0}})\|_{L^2}\|\overline{\omega}\|_{H^s}
\\&\leq\|\overline{\omega}\|_{H^s}\|\omega^{\alpha_0}\|_{H^{s+1}}\|J^s\overline{u}_{II}\|_{L^p}.
\end{split}\end{equation*}
Applying Proposition \ref{add1} gives
\begin{equation*}\begin{split}
|I_{21}|=&\Big|\int_{\R^2}\big(J^s(\overline{u}_I\cdot\nabla \overline{\omega})-\overline{u}_I\cdot\nabla J^s\overline{\omega}\big)J^s\overline{\omega}\,\mathrm{d}x\Big|\\ \leq&\|J^s(\overline{u}_I\cdot \nabla \overline{\omega})-\overline{u}_I\cdot J^s \nabla \overline{\omega}\|_{L^2}\|\overline{\omega}\|_{H^s}
 \leq \|\overline{\omega}\|^3_{H^s}.
\end{split}
\end{equation*}
By Proposition \ref{add-0}, one has
\begin{equation*}\begin{split}
|I_{31}|&\leq
\|J^s(\overline{u}_I\cdot\nabla \omega^{\alpha_{0}})\|_{L^2}\|\overline{\omega}\|_{H^s}
\\&\leq\|\overline{\omega}\|_{H^s}^2\|\omega^{\alpha_0}\|_{H^{s+2}}.
\end{split}\end{equation*}
Inserting the estimates of $I_{21},\,I_{22}, \,I_{31}$ and $I_{32}$ into \eqref{I_2}, we arrive at
\begin{equation}\label{11-3}\begin{split}
&|I_2|+|I_3|\\
\leq& C\left(\|\overline{\omega}\|_{H^s}^2\|J^s\overline{u}_{II}\|_{L^p}+
\|\overline{\omega}\|_{H^s}\|\omega^{\alpha_0}\|_{H^{s+1}}\|J^s\overline{u}_{II}\|_{L^p}+
\|\overline{\omega}\|^3_{H^s}+\|\overline{\omega}\|_{H^s}^2\|\omega^{\alpha_0}\|_{H^{s+2}}\right).
\end{split}\end{equation}
In view of \eqref{11-1}, \eqref{11-2} and \eqref{11-3}, it deduces
\begin{equation}\label{th3-proof}\begin{split}
 \frac{\mathrm{d}}{\mathrm{d}t}\|\overline{\omega}(t)\|_{H^s}  \leq&\|\overline{\omega}\|_{H^s}(\|u^{\alpha_{0}}\|_{H^{s}}+\|\omega^{\alpha_0}\|_{H^{s+2}})
+\|\overline{\omega}\|^2_{H^s}\\
&+\|\overline{\omega}\|_{H^s}\|J^s\overline{u}_{II}\|_{L^p}
+\|\omega^{\alpha_0}\|_{H^{s+1}}\|J^s\overline{u}_{II}\|_{L^p}.
\end{split}\end{equation}

Now we estimate $\|J^s\overline{u}_{II}\|_{L^p}$. Similar to the proof of Theorem \ref{th2}, we use the integral form \eqref{11-6}.
Note that
\begin{equation*}\begin{split}
H_{1}&=\int_{|x-y|\leq\epsilon}\Big(\frac{(x-y)^{\perp}}{|x-y|^{2+2\alpha}}-\frac{(x-y)^{\perp}}{|x-y|^{3}}\Big)\Big(J^s\omega^{\alpha_0}(y)-J^s\omega^{\alpha_0}(x)\Big)\,\mathrm{d}y.
\end{split}
\end{equation*}
By the mean value formula and the H\"{o}lder inequality, we obtain
\begin{equation*}\begin{split}
\|H_{1}\|_{L^p}&\leq C\Big(\frac{1}{2-2\alpha}\epsilon^{2-2\alpha}+\epsilon\Big)\|J^{s+1}\omega^{\alpha_0}\|_{L^p}
\\&\leq C\Big(\frac{1}{2-2\alpha}\epsilon^{2-2\alpha}+\epsilon\Big)\|\omega^{\alpha_0}\|_{H^{s+2}},
\end{split}
\end{equation*}
where Lemma \ref{embedding1} is used in the last inequality.
Similarly, for $2+2\alpha\leq\gamma\leq3,$
\begin{equation*}\begin{split}
\|H_{2}\|_{L^p}&= \Big(\frac12-\alpha\Big)\Big\|\int_{1>|x-y|\geq\epsilon}\frac{(x-y)^{\perp}(|x-y|^{\gamma}\log|x-y|)}{|x-y|^{2+2\alpha}|x-y|^{2+2\alpha_0}}
J^s\omega^{\alpha_0}(y)\,\mathrm{d}y\Big\|_{L^p}
\\&\leq \Big(\frac12-\alpha\Big)\Big\|\int_{1>|x-y|\geq\epsilon}\frac{|\log|x-y||}{|x-y|^{2}}
|J^s\omega^{\alpha_0}(y)|\,\mathrm{d}y\Big\|_{L^p}\\&\leq\Big(\frac12-\alpha\Big)|\log\epsilon|^2\|J^s\omega^{\alpha_0}\|_{L^p}
\\&\leq\Big(\frac12-\alpha\Big)|\log\epsilon|^2\|\omega^{\alpha_0}\|_{H^{s+1}}.
\end{split}
\end{equation*}
Note that $\ln |x-y|\le C|x-y|^\sigma$ for $|x-y|\ge 1$ and any $\sigma>0$, where $C$ may depend on $\sigma$. We can apply the mean value formula and the Young inequality to obtain
\begin{equation*}\begin{split}
\|H_{3}\|_{L^p}&\leq\Big (\frac12-\alpha\Big)\Big\|\int_{|x-y|\geq1}\frac{|\log|x-y||}{|x-y|^{1+2\alpha}}
|J^s\omega^{\alpha_0}(y)|\,\mathrm{d}y\Big\|_{L^p}\\&\leq\Big(\frac12-\alpha\Big)\|J^s\omega^{\alpha_0}\|_{L^r}
\Big(\int_{|z|\geq1}\Big(\frac{|\log|z||}{|z|^{1+2\alpha}}\Big)^q\,\mathrm{d}z\Big)^{\frac1q}
\\&\leq\Big(\frac12-\alpha\Big)\|J^s\omega^{\alpha_0}\|_{L^r}\Big(\frac{1}{(1+2\alpha-\sigma)q-2}\Big)^{\frac1q},
\end{split}
\end{equation*}
where $\frac1p+1=\frac1r+\frac1q,$ $q>\frac{2}{1+2\alpha-\sigma}$, $p>\frac{2}{2\alpha-\sigma}$, and we can choose some $r>2$ such that the following embedding
\begin{equation*}
H^{s+1}(\R^2)\hookrightarrow W^{s,r}(\R^2)
\end{equation*}
holds (see Lemma \ref{embedding1}).
Consequently,
\begin{equation}\label{11-5}\begin{split}
 \|J^s\overline{u}_{II}(t)\|_{L^p}\leq &\|H_{1}\|_{L^p}+\|H_{2}\|_{L^p}+\|H_{3}\|_{L^p}
 \\\leq & C\Big(\frac{1}{2-2\alpha}\epsilon^{2-2\alpha}+\epsilon\Big)\|\omega^{\alpha_0}\|_{H^{s+2}}
\\
&+\Big(\frac12-\alpha\Big)|\log\epsilon|^2\|\omega^{\alpha_0}\|_{H^{s+1}}+\Big(\frac12-\alpha\Big)\|\omega^{\alpha_0}\|_{H^{s+1}}.
\end{split}
\end{equation}
Let  $\epsilon=\frac12-\alpha$. The estimate \eqref{11-5} combined with \eqref{th3-proof} yields
\begin{equation*}\begin{split}
\frac{\mathrm{d}}{\mathrm{d}t}\|\overline{\omega}(t)\|_{H^s}  \leq &\|\overline{\omega}\|_{H^s}\big(\|u^{\alpha_{0}}\|_{H^{s}}
+\|\omega^{\alpha_0}\|_{H^{s+2}}\big)\\
&+\|\overline{\omega}\|^2_{H^s}+\|\omega^{\alpha_0}\|_{H^{s+2}}^2\left(\Big(\frac12-\alpha\Big)+\Big(\frac12-\alpha\Big)\log^2\Big(\frac12-\alpha\Big)\right).
\end{split}\end{equation*}
Arguing similarly as the last part in the proof of Theorem \ref{th1}, we obtain
\begin{equation*}
\|\overline{\omega}(t)\|_{H^s}\leq C\left(\Big(\frac12-\alpha\Big)+\Big(\frac12-\alpha)\log^2\Big(\frac12-\alpha\Big)\right).
\end{equation*}
Moreover, we can prove that $\omega^{\alpha}\in C([0,T]; H^{s+2})$ for any $t\in [0,T]$. The proof of the theorem is finished.

\subsection{Proof of Corollary \ref{Cor1+}}

Suppose that the result is not true. Then there exists a $M>0$ and a $\delta_0>$ such that $T^*_\alpha\le M$ for all $\alpha\in (0,\delta_0)$. But it is known that for any $T>0$, the smooth solution of the Euler equations exists on $[0,T]$. Take $T=M+1$. According to Theorem \ref{th1+}, there exists a $0<\delta\le \delta_0$ depending on $T$ such that the smooth solution of the generalized SQG exists on $[0,T]$ as well. This contradicts with the assumption that  the maximal existence time $T^*_\alpha\le M$. The proof of the corollary is complete.

\appendix

\section{ Littlewood-Paley theory and Besov spaces}\label{sec4}
\setcounter{section}{6}\setcounter{equation}{0}

 In this appendix, we introduce Besov spaces which are a generalization of Sobolev spaces. We  recall the dyadic decomposition of the unity in the whole space (see e.g. \cite{[1],[MWZ]}).
\begin{proposition}\label{Dy}
There exists a couple of smooth functions $(\chi,\varphi)$ with  values in
$[0,1]$  such that
$\mathrm{supp}\,\chi\subset\big\{\xi\in\mathbb{R}^{n}:|\xi|\leq\frac{4}{3}\big\}$,
$\mathrm{supp}\,\varphi\subset\big\{\xi\in\mathbb{R}^{n}:\frac{3}{4}\leq|\xi|\leq\frac{8}{3}\big\}$
and
\begin{enumerate}[\rm (i)]
    \item $\chi(\xi)+\displaystyle{\sum_{j\in \mathbb{N}}}\varphi(2^{-j}\xi)=1,$ $\forall\,\xi\in \mathbb{R}^{n},$\\[2mm]
\item $\mathrm{supp}\, \varphi(2^{-p}\cdot)\cap \mathrm{supp}\, \varphi(2^{-q}\cdot)=\emptyset,$ if\, $|p-q|\geq2,$\\[2mm]
\item $\mathrm{supp}\, \chi(\cdot)\cap \mathrm{supp}\, \varphi(2^{-q}\cdot)=\emptyset,$ if\, $q\geq1.$
\end{enumerate}

\end{proposition}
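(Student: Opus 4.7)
The plan is to construct $(\chi,\varphi)$ explicitly from a single radial bump function via a dyadic telescoping trick, which is the standard route to a Littlewood--Paley decomposition. First I would fix a radial $C^\infty$ function $\psi:\mathbb{R}^n\to[0,1]$ with $\psi(\xi)=1$ for $|\xi|\le 3/4$ and $\psi(\xi)=0$ for $|\xi|\ge 4/3$. Such a $\psi$ exists by mollifying the characteristic function of a suitable intermediate ball, or by applying the standard partition-of-unity construction (e.g.\ an auxiliary smooth function like $t\mapsto e^{-1/t}\mathbf 1_{t>0}$). Then I would set
\[
\chi(\xi):=\psi(\xi),\qquad \varphi(\xi):=\psi(\xi/2)-\psi(\xi).
\]

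Next I would verify the support conditions directly from the construction. Clearly $\operatorname{supp}\chi\subset\{|\xi|\le 4/3\}$. For $\varphi$: if $|\xi|\le 3/4$ then both $\psi(\xi/2)$ and $\psi(\xi)$ equal $1$, so $\varphi(\xi)=0$; and if $|\xi|\ge 8/3$ then $|\xi/2|\ge 4/3$, so $\psi(\xi/2)=\psi(\xi)=0$, giving $\varphi(\xi)=0$. Hence $\operatorname{supp}\varphi\subset\{3/4\le|\xi|\le 8/3\}$, and both functions take values in $[0,1]$ (up to noting that $\varphi\ge 0$ since $\psi$ is nonincreasing in $|\xi|$, which can be arranged in the construction of $\psi$).

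For property (i), I would use the telescoping identity
\[
\chi(\xi)+\sum_{j=0}^{N}\varphi(2^{-j}\xi)=\psi(\xi)+\sum_{j=0}^{N}\bigl(\psi(2^{-j-1}\xi)-\psi(2^{-j}\xi)\bigr)=\psi(2^{-N-1}\xi).
\]
For each fixed $\xi$, $\psi(2^{-N-1}\xi)\to\psi(0)=1$ as $N\to\infty$; moreover, for $|\xi|\le R$ the series is finite once $2^{-N-1}R\le 3/4$, so the sum is locally finite and equals $1$ identically. Property (ii) is an elementary scale check: if $|p-q|\ge 2$, say $q\ge p+2$, then $\operatorname{supp}\varphi(2^{-p}\,\cdot\,)\subset\{|\xi|\le 2^p\cdot 8/3\}$ while $\operatorname{supp}\varphi(2^{-q}\,\cdot\,)\subset\{|\xi|\ge 2^q\cdot 3/4\ge 2^{p+2}\cdot 3/4=3\cdot 2^p\}$, and $3\cdot 2^p>8/3\cdot 2^p$, so the supports are disjoint. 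Property (iii) is similar: for $q\ge 1$, $\operatorname{supp}\varphi(2^{-q}\,\cdot\,)\subset\{|\xi|\ge 3/2\}$, which does not meet $\{|\xi|\le 4/3\}\supset\operatorname{supp}\chi$.

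There is really no hard step here; the only point requiring care is choosing the cutoff $\psi$ with the precise thresholds $3/4$ and $4/3$ so that the dilations used in the telescoping land correctly against the prescribed annular support $\{3/4\le|\xi|\le 8/3\}$ of $\varphi$ and against the numerical gaps demanded by (ii) and (iii). Once $\psi$ is pinned down with those constants, everything follows by direct inspection of supports and a one-line telescoping computation.
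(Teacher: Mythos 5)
Your construction is correct: the telescoping identity, the support computations, and the verification of (i)--(iii) all check out, with the one genuinely necessary precaution (taking $\psi$ radially nonincreasing so that $\varphi=\psi(\cdot/2)-\psi\ge 0$) duly noted. The paper itself states this proposition without proof, citing it as the standard dyadic partition of unity from the references, and your argument is precisely the standard construction given there, so there is nothing further to compare.
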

For every $u\in \mathcal{S}'(\mathbb{R}^{n})$, we define the nonhomogeneous Littlewood-Paley  operators by
\begin{equation*}
 \Delta_{-1}u:=\chi(D)u\quad\text{and}\quad   {\Delta}_{j}u:=\varphi(2^{-j}D)u\quad {\rm for\ each\ }j\in\mathbb{N}.
\end{equation*}
We shall also use the following low-frequency cut-off:
\begin{equation*}
    {S}_{j}u:=\chi(2^{-j}D)u.
\end{equation*}
It may be  easily checked that
\begin{equation*}
    u=\sum_{j\geq-1}{\Delta}_{j}u
\end{equation*}
holds in $\mathcal{S}'(\mathbb{R}^{n})$.

The following lemma is the well-known Bernstein inequality which has been frequently used in the proof of Proposition \ref{add-0} and Proposition \ref{add1}.

\begin{lemma}[\textbf{Bernstein's inequality}]\label{B}
Let $\mathcal {B}$ be a ball of  $\R^{n}$, and $\mathcal {C}$ be a
ring of $\R^{n}$. There exists a positive constant C such that for
all integer $k\geq0$, all $1\leq a\leq b \leq\infty$ and
$u\in{L^{a}(\R^{n})}$, the following estimates are satisfied:
$$\sup_{|\alpha|=k} \|\partial^{\alpha}u\|_{L^{b}(\R^{n})}\leq
C^{k+1}\lambda^{k+n(\frac{1}{a}-\frac{1}{b})}\|u\|_{L^{a}(\R^{n})},
~~ \mathrm{supp}\, \hat{u}\subset \lambda \mathcal {B},$$
$$C^{-(k+1)}\lambda^{k}\|u\|_{L^{a}(\R^{n})}\leq \sup_{|\alpha|=k}\|\partial^{\alpha}u\|_{L^{a}(\R^{n})}\leq C^{k+1}\lambda^{k}\|u\|_{L^{a}(\R^{n})},
~~ \mathrm{supp}\, \hat{u}\subset \lambda \mathcal {C}.$$
\end{lemma}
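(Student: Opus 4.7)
The plan is to reduce both inequalities to convolution estimates on frequency-localized multipliers and then apply Young's convolution inequality. The key observation is that a function whose Fourier transform is supported in $\lambda\mathcal{B}$ (resp.\ $\lambda\mathcal{C}$) is invariant under convolution with an appropriately chosen Schwartz kernel built from a smooth frequency cutoff, and differentiation then simply produces polynomial factors in $\lambda$.

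For the first inequality, fix once and for all a smooth bump $\widetilde\chi \in \mathcal{S}(\R^n)$ with $\widetilde\chi\equiv 1$ on $\mathcal{B}$ and compactly supported in a slightly larger ball. Since $\mathrm{supp}\,\hat u\subset \lambda\mathcal{B}$, one has $\hat u(\xi)=\widetilde\chi(\xi/\lambda)\hat u(\xi)$, i.e.\ $u=h_\lambda\ast u$ with $h_\lambda(x)=\lambda^n h(\lambda x)$ and $h=\mathcal{F}^{-1}\widetilde\chi$. Differentiating yields $\partial^\alpha u=(\partial^\alpha h_\lambda)\ast u$, where $\partial^\alpha h_\lambda(x)=\lambda^{n+|\alpha|}(\partial^\alpha h)(\lambda x)$. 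Young's inequality with $1+\tfrac1b=\tfrac1a+\tfrac1c$ gives $\|\partial^\alpha u\|_{L^b}\le \|\partial^\alpha h_\lambda\|_{L^c}\,\|u\|_{L^a}$, and a change of variables produces $\|\partial^\alpha h_\lambda\|_{L^c}=\lambda^{|\alpha|+n(1-1/c)}\|\partial^\alpha h\|_{L^c}=\lambda^{k+n(1/a-1/b)}\|\partial^\alpha h\|_{L^c}$. Since $h$ is Schwartz, $\|\partial^\alpha h\|_{L^c}$ is bounded by $C^{k+1}$ uniformly in $|\alpha|=k$, which yields the claimed estimate.

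For the ring case, choose $\widetilde\phi\in C^\infty_c(\R^n\setminus\{0\})$ with $\widetilde\phi\equiv 1$ on $\mathcal{C}$ and supported in a slightly larger annulus. On the support of $\widetilde\phi(\,\cdot/\lambda)$ one has $|\xi|\gtrsim \lambda$, so the multiplier $m_j(\xi)=-i\xi_j|\xi|^{-2}\widetilde\phi(\xi/\lambda)$ is smooth with Fourier inverse $K_j^\lambda(x)=\lambda^{n-1}K_j^1(\lambda x)$, giving $\|K_j^\lambda\|_{L^1}\le C\lambda^{-1}$. Since $\sum_j m_j(\xi)\cdot(i\xi_j)\equiv \widetilde\phi(\xi/\lambda)$ acts as the identity on $\hat u$, we obtain $u=\sum_{j=1}^n K_j^\lambda\ast \partial_j u$. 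Iterating this identity $k$ times produces a representation $u=\sum_{|\beta|=k}K_\beta^\lambda\ast \partial^\beta u$ with $\|K_\beta^\lambda\|_{L^1}\le C^k\lambda^{-k}$, and Young's inequality then gives the lower bound $\|u\|_{L^a}\le C^{k+1}\lambda^{-k}\sup_{|\alpha|=k}\|\partial^\alpha u\|_{L^a}$. The upper bound in the ring case is a special case of the ball estimate (taking $a=b$).

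\textbf{Main obstacle.} The routine part is the convolution/scaling computation; the technically delicate point is tracking the geometric growth $C^{k+1}$ of constants uniformly in the order of differentiation $k$. This requires that $\|\partial^\alpha h\|_{L^c}$ and the $L^1$ norms of the iterated convolution kernels $K_\beta^\lambda$ be controlled by an exponential (and not factorial) factor in $k$, which ultimately relies on the Schwartz character of $h$ and $K_j^1$ together with a combinatorial bookkeeping of the $n^k$ choices of multi-index $\beta$ appearing in the iteration. All other steps are standard once this constant is in place.
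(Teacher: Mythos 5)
Your proof is correct: it is the standard convolution--dilation argument for Bernstein's inequality (reduce to $u=h_\lambda\ast u$ via a frequency cutoff, apply Young's inequality, and for the reverse inequality on an annulus invert the gradient through the multiplier $-i\xi_j|\xi|^{-2}$ and iterate), and your identification of the uniform-in-$k$ control of the constants as the only delicate point is accurate. The paper itself offers no proof of Lemma \ref{B} --- it is recalled as a classical fact from the cited references \cite{[1],[MWZ]} --- and your argument coincides with the proof given there, so there is nothing to compare beyond noting the agreement.
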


Let us now introduce the basic tool of the paradifferential calculus which is Bony's decomposition. That is,
for two tempered distributions $u$ and $v$,
\begin{equation*}
uv=T_{u}v+T_{v}u+R(u,v),
\end{equation*}where
$$
T_{u}v=\displaystyle{\sum_{j}}S_{j-1}u\Delta_{j}v,  \qquad R(u,v)=\displaystyle{\sum_{j}}\Delta_{j}u\widetilde{\Delta}_{j}v,
$$
where $\widetilde{\Delta}_{j}=\Delta_{j-1}+\Delta_{j}+\Delta_{j+1}.$ In usual,
$T_{u}v$ is called paraproduct of $v$ by $u$ and $R(u,v)$ the remainder term.

\begin{definition}\label{Besov space}
For $s\in \mathbb{R}$, $(p,q)\in [1,+\infty]^{2}$ and $u\in \mathcal{S}'(\mathbb{R}^{n})$, we set
\begin{equation*}
    \norm{u}_{{B}^{s}_{p,q}(\mathbb{R}^{n})}:=
    \Big(\sum_{j\geq-1}2^{jsq}
    \norm{{\Delta}_{j}u}_{L^{p}(\mathbb{R}^{n})}^{q}\Big)^{\frac{1}{q}}
    \quad\text{if}\quad q<+\infty
\end{equation*}
and
\begin{equation*}
    \norm{u}_{{B}^{s}_{p,\infty}(\mathbb{R}^{n})}:=\sup_{j\geq-1}2^{js}\norm{{\Delta}_{j}u}_{L^{p}(\mathbb{R}^{n})}.
\end{equation*}
Then we define  inhomogeneous Besov spaces as
\begin{equation*}
   {B}^{s}_{p,q}(\mathbb{R}^{n}):=\big\{u\in\mathcal{S}'(\R^{n}):\,\,\norm{u}_{{B}^{s}_{p,q}(\mathbb{R}^{n})}<+\infty\big\}.
\end{equation*}
\end{definition}
It should be remarked that the usual Sobolev spaces $H^s(\R^2)$  coincide with the inhomogeneous Besov spaces  $B_{2,2}^s(\R^n)$.
Also, by using Definition \ref{Besov space},  we get easily  for any $s>2$ and $\alpha<\frac12,$ the following embeds hold
\begin{equation}\label{imbedding}\tag{D.1}
H^s(\R^n)\hookrightarrow B^{1+2\alpha}_{2,1}(\R^n)\hookrightarrow H^{1+2\alpha}(\R^n).
\end{equation}
Lastly, we turn to review two useful lemmas which have been used in foregoing sections.
\begin{lemma}\label{annulus}
Let $\mathcal {C'}$ be an  annulus of $\R^n$, $s$ be a real number, and $[p,r]\in[1,\infty]^2.$ Assume $\{u_j\}_{j\geq-1}$ be a sequence of smooth functions such that $$\mathrm{supp}\, \widehat{u}_j\subset2^j\mathcal {C'} \quad\text{and}\quad\big\|\{2^{js}\|u_j\|_{L^p(\R^n)}\}_{j\geq-1}\big\|_{\ell^r}<\infty.$$
We then have $$u:=\sum_{j\geq-1} u_j\in {B}^{s}_{p,r}(\R^n)\quad\text{and}\quad\|u\|_{{B}^{s}_{p,r}(\R^n)}\leq C_s\big\|\{2^{js}\|u_j\|_{L^p(\R^n)}\}_{j\geq-1}\big\|_{\ell^r}.$$
\end{lemma}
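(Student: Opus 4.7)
The plan is to exploit the spectral localization hypothesis $\mathrm{supp}\,\widehat{u}_j\subset 2^j\mathcal{C}'$, which forces the dyadic blocks $\Delta_k u_j$ to vanish unless $k$ and $j$ are comparable. Concretely, since $\mathcal{C}'$ is a fixed annulus and the Fourier support of $\Delta_k$ is contained in the ring $\{3/4\cdot 2^k\leq|\xi|\leq 8/3\cdot 2^k\}$, there exists an integer $N_0=N_0(\mathcal{C}')$ such that $\mathrm{supp}\,\widehat{u}_j\cap\mathrm{supp}\,\varphi(2^{-k}\cdot)=\emptyset$ whenever $|k-j|>N_0$. Consequently, for every $k\geq -1$,
\begin{equation*}
\Delta_k u=\sum_{j\geq -1}\Delta_k u_j=\sum_{|k-j|\leq N_0}\Delta_k u_j,
\end{equation*}
so the decomposition reduces to a finite sum uniformly in $k$.

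Next I would invoke the standard fact that $\Delta_k$ acts on $L^p(\R^n)$ uniformly in $k$: writing $\Delta_k f=2^{kn}h(2^k\cdot)\ast f$ with $h\in\mathcal{S}$ the inverse Fourier transform of $\varphi$, Young's inequality gives $\|\Delta_k u_j\|_{L^p}\leq \|h\|_{L^1}\|u_j\|_{L^p}\leq C\|u_j\|_{L^p}$. The $k=-1$ case is analogous, replacing $\varphi$ by $\chi$. Combining this with the previous step yields
\begin{equation*}
\|\Delta_k u\|_{L^p}\leq C\sum_{|k-j|\leq N_0}\|u_j\|_{L^p}.
\end{equation*}

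Finally, I would multiply by $2^{ks}$ and take the $\ell^r$-norm in $k$. Using $2^{ks}\leq 2^{|s|N_0}\,2^{js}$ for $|k-j|\leq N_0$, one obtains
\begin{equation*}
2^{ks}\|\Delta_k u\|_{L^p}\leq C_s\sum_{|k-j|\leq N_0}2^{js}\|u_j\|_{L^p},
\end{equation*}
and then the $\ell^r$-norm on the left is controlled by the $\ell^r$-norm on the right by the triangle inequality (there are at most $2N_0+1$ terms, so this costs only a constant factor $C_s$). The $r=\infty$ case follows by taking the supremum instead of the $\ell^r$-sum. This gives exactly
\begin{equation*}
\|u\|_{B^s_{p,r}(\R^n)}\leq C_s\bigl\|\{2^{js}\|u_j\|_{L^p(\R^n)}\}_{j\geq -1}\bigr\|_{\ell^r},
\end{equation*}
and in particular $u\in B^s_{p,r}(\R^n)$, as required.

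There is no real obstacle: the entire argument rests on (a) the quasi-orthogonality of frequency-localized pieces, which is built into Proposition \ref{Dy}, and (b) the Bernstein-type fact that $\Delta_k$ is bounded on $L^p$ uniformly in $k$, which is contained in Lemma \ref{B}. The only point worth double-checking is the value of $N_0$, which depends on the dilation factor between $\mathcal{C}'$ and the fixed annulus $\{3/4\leq|\xi|\leq 8/3\}$ used to define the Littlewood--Paley decomposition; this is harmless and produces only a constant $C_s$ depending on $s$ and $\mathcal{C}'$.
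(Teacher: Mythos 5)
Your proof is correct and is the canonical argument for this lemma; the paper itself states it without proof, as a standard fact reviewed from the Littlewood--Paley literature (e.g.\ Bahouri--Chemin--Danchin), so there is no alternative route in the paper to compare against. The quasi-orthogonality step, the uniform $L^p$-boundedness of $\Delta_k$ via Young's inequality, and the finite-overlap summation in $\ell^r$ are all exactly right. The only point you leave implicit is that the series $\sum_{j\geq-1}u_j$ actually converges in $\mathcal{S}'$ so that $u$ is well defined for arbitrary real $s$ (including $s<0$); this follows routinely because $\langle u_j,\phi\rangle=\langle u_j,\widetilde{\Delta}_j\phi\rangle$ for a slightly fattened annular cut-off and $\|\widetilde{\Delta}_j\phi\|_{L^{p'}}$ decays faster than any power of $2^{-j}$ for $\phi\in\mathcal{S}$, but it is worth a sentence since it is precisely what distinguishes the annulus case from the ball case of Lemma~\ref{ball}, where $s>0$ is required.
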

\begin{lemma}\label{ball}
Let $\mathcal {B'}$ be a ball of $\R^n$, $s>0$ be a real number, and $[p,r]\in[1,\infty]^2.$ Let $\{u_j\}_{j\geq-1}$ be a sequence of smooth functions such that $$\mathrm{supp}\, \widehat{u}_j\subset2^j\mathcal {B'}\quad\text{and}\quad\big\|\{2^{js}\|u_j\|_{L^p(\R^n)}\}_{j\geq-1}\big\|_{\ell^r}<\infty.$$
We then have $$u:=\sum_{j\geq-1} u_j\in {B}^{s}_{p,r}(\R^n)\quad\text{and}\quad\|u\|_{{B}^{s}_{p,r}(\R^n)}\leq C_s\big\|\{2^{js}\|u_j\|_{L^p(\R^n)}\}_{j\geq-1}\big\|_{\ell^r}.$$
\end{lemma}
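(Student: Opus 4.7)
The plan is to estimate the dyadic blocks $\Delta_q u$ in $L^p$ and then sum in $q$ with $\ell^r$ weights. The crucial difference from Lemma \ref{annulus} is that $\widehat{u}_j$ lives in a \emph{ball} of radius $\sim 2^j$ rather than in an annulus, so in general $\Delta_q u_j$ can be nonzero for all $j\ge q$, not merely for $|j-q|\le N_0$. The condition $s>0$ is exactly what will let us close the estimate.

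First, I would exploit the support condition. Since $\mathrm{supp}\,\widehat{u}_j\subset 2^j\mathcal{B}'$ and $\mathrm{supp}\,\varphi(2^{-q}\cdot)$ is contained in an annulus $\{|\xi|\sim 2^q\}$, there is a fixed integer $N_0$ (depending only on the radii of $\mathcal{B}'$ and on $\varphi$) such that $\Delta_q u_j\equiv 0$ whenever $j<q-N_0$. Consequently, for every $q\ge-1$,
\begin{equation*}
\Delta_q u=\sum_{j\ge q-N_0}\Delta_q u_j,
\qquad
\|\Delta_q u\|_{L^p(\R^n)}\le C\sum_{j\ge q-N_0}\|u_j\|_{L^p(\R^n)},
\end{equation*}
where $C$ depends only on $\varphi$ (via the $L^1$-norm of its inverse Fourier transform, used through Young's convolution inequality).

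Next I would multiply by $2^{qs}$ and factor out $2^{js}\|u_j\|_{L^p}$, writing $j=q+k$ with $k\ge -N_0$:
\begin{equation*}
2^{qs}\|\Delta_q u\|_{L^p}\le C\sum_{k\ge -N_0}2^{-ks}\,\bigl(2^{(q+k)s}\|u_{q+k}\|_{L^p}\bigr).
\end{equation*}
The right-hand side is the discrete convolution (in the variable $q$) of the sequence $b_j:=2^{js}\|u_j\|_{L^p}$ (extended by $0$ for $j<-1$) with $a_k:=2^{-ks}\mathbf{1}_{k\ge -N_0}$. The assumption $s>0$ ensures that $a\in\ell^1(\mathbb{Z})$ with $\|a\|_{\ell^1}\le 2^{N_0 s}/(1-2^{-s})$; if instead $s\le 0$, this sum diverges, which is precisely why the hypothesis is sharp and why the analogue for annuli in Lemma \ref{annulus} allows any real $s$.

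Finally, I would apply Young's convolution inequality for series, $\|a*b\|_{\ell^r}\le\|a\|_{\ell^1}\|b\|_{\ell^r}$, to conclude that
\begin{equation*}
\bigl\|\{2^{qs}\|\Delta_q u\|_{L^p}\}_{q\ge-1}\bigr\|_{\ell^r}\le C_s\,\bigl\|\{2^{js}\|u_j\|_{L^p}\}_{j\ge-1}\bigr\|_{\ell^r},
\end{equation*}
with $C_s$ depending only on $s$, $\varphi$, and the radius of $\mathcal{B}'$. This is exactly the $B^s_{p,r}$-norm bound from Definition \ref{Besov space}. The convergence of $u=\sum_{j\ge-1}u_j$ in $\mathcal{S}'(\R^n)$ follows from the finiteness of this norm together with the standard fact that $B^s_{p,r}\hookrightarrow\mathcal{S}'$. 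The only delicate point is the use of $s>0$ to pass from a ball-supported decomposition to a genuine Besov estimate; everything else reduces to Bernstein and Young.
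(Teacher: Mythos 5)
Your proof is correct and is the standard argument for this lemma, which the paper itself states without proof (it is quoted from \cite{[1],[MWZ]}): the ball support forces $\Delta_q u_j\equiv 0$ for $j<q-N_0$, the resulting one-sided weight $2^{(q-j)s}$ is summable over $j\ge q-N_0$ precisely because $s>0$, and Young's inequality for series closes the estimate. The only cosmetic point is the order of operations at the end: the convergence of $\sum_{j\ge-1}u_j$ in $\mathcal{S}'(\R^n)$ should be secured before its Besov norm is invoked, and for $s>0$ this is immediate since $\ell^r\hookrightarrow\ell^\infty$ gives $\|u_j\|_{L^p}\lesssim 2^{-js}$, so the series converges normally in $L^p(\R^n)\subset\mathcal{S}'(\R^n)$.
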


{\bf Acknowledgements.}
 Jiu was partially supported  by the National Natural Science Foundation of China
(No.11671273).  Zheng was partially supported  by the National Natural Science Foundation of China (No.11501020, No.11771423).

\vspace{3mm}

{\it Conflict of Interest:} The authors declare that they have no conflict of interest.

\end{document}